\pgfplotsset{compat=1.15}
\newcounter{c tilde enumeration}
\newtheorem{theorem}			     {Theorem}	    [section]
\newtheorem{corollary}	  [theorem]	 {Corollary}	
\newtheorem{lemma}	      [theorem]  {Lemma}		
\theoremstyle{definition}
\newtheorem*{remark*} 	  		     {Remark}
\newtheorem{example}	  [theorem]  {Example}	
\DeclareMathOperator{\Lex}{\mathbf{Lex}}
\DeclareMathOperator{\Set}{\mathbf{Set}}
\DeclareMathOperator{\Sk}{\mathbf{Sk}}
\DeclareMathOperator{\op}{\mathsf{op}}
\DeclareMathOperator{\ob}{\mathsf{ob}}
\DeclareMathOperator{\Graph}{\mathbf{FGraph}}
\DeclareMathOperator{\LC}{\Lex(\mathbb{C},\Set)^{\op}}
\DeclareMathOperator{\GG}{\mathscr{G}}
\DeclareMathOperator{\limit}{\mathsf{lim}}
\DeclareMathOperator{\colimit}{\mathsf{colim}}
\DeclareMathOperator{\Ob}{\mathsf{Ob}}
\DeclareMathOperator{\Pb}{\mathsf{Pb}}
\DeclareMathOperator{\Po}{\mathsf{Po}}
\DeclareMathOperator{\Prod}{\mathsf{Prod}}
\DeclareMathOperator{\Coprod}{\mathsf{Coprod}}
\DeclareMathOperator{\Eq}{\mathsf{Eq}}
\DeclareMathOperator{\Coeq}{\mathsf{Coeq}}
\renewcommand{\hom}{\mathsf{hom}}
\newcommand{\A}{\mathcal{A}}
\renewcommand{\AA}{\mathbb{A}}
\newcommand{\B}{\mathcal{B}}
\newcommand{\BB}{\mathbb{B}}
\newcommand{\CC}{\mathbb{C}}
\newcommand{\CCC}{\widetilde{\mathbb{C}}}
\newcommand{\DD}{\mathbb{D}}
\newcommand{\G}{\mathcal{G}}
\renewcommand{\H}{\mathcal{H}}
\newcommand{\NN}{\mathbb{N}}
\newcommand{\PP}{\mathbb{P}}
\newcommand{\V}{\mathcal{V}}
\newcommand{\X}{\mathcal{X}}
\newcommand{\XX}{\mathbb{X}}
\newcommand{\y}{\mathsf{y}}
\newcommand{\Z}{\mathcal{Z}}
\newcommand{\cd}{\xymatrix}
\renewcommand{\1}{\mathbf{1}}
\newcounter{steps}
\newcommand{\newstep}{Step~\refstepcounter{steps}\thesteps. }
\title{On stability of exactness properties under the pro-completion}
\author[a,b]{Pierre-Alain Jacqmin}
\author[c]{Zurab Janelidze}
\affil[a]{\small{\textit{Institut de Recherche en Math\'ematique et Physique, Universit\'e catholique de Louvain, Chemin du Cyclotron 2, B 1348 Louvain-la-Neuve, Belgium}}}
\affil[b]{\small{\textit{Department of Mathematics and Statistics, University of Ottawa, 150 Louis-Pasteur, K1N 6N5, Ottawa, Ontario, Canada}}}
\affil[c]{\small{\textit{Mathematics Division, Department of Mathematical Sciences, Stellenbosch University, Private Bag X1 Matieland 7602, South Africa}}}
\date{21 October 2020}
\begin{document}

\maketitle

\begin{abstract} 

In this paper we formulate and prove a general theorem of stability of exactness properties under the pro-completion, which unifies several such theorems in the literature and gives many more. The theorem depends on a formal approach to exactness properties proposed in this paper, which is based on the theory of sketches. Our stability theorem has applications in proving theorems that establish links between exactness properties, as well as in establishing embedding (representation) theorems for classes of categories defined by exactness properties.   
\end{abstract}

{\small\textit{2020 Mathematics Subject Classification:} 18A35, 18C30, 03C52, 20E18, 08B05 (primary); 18B15, 18E08, 18E13, 18N10, 18C35 (secondary).}

{\small\textit{Keywords:} exactness property, pro-completion, limit, sketch, category of models, 2-category, Yoneda embedding.}

\section*{Introduction}

The construction  of pro-completion of a category is well known in mathematics. For instance, the pro-completion of the category of finite groups is the category of profinite groups, the pro-completion of the category of finite sets is the category of profinite spaces, and so on. We address the following question: which properties of the given category (and more generally, of an internal structure in the category) carry over to its pro-completion? 

If $\CC$ is a small finitely complete category, its pro-completion is the same as its free cofiltered limit completion, which is given by the restricted Yoneda embedding $\CC \hookrightarrow \LC$, where $\Lex(\CC,\Set)$ is the category of finite limit preserving functors from $\CC$ to $\Set$ (see~\cite{AGV,GU}). In the literature, many so-called `exactness properties' have been shown to be stable under this construction: if $\CC$ satisfies the given property, so does $\LC$. Among examples of such properties are the following (in each case, the cited reference is where the corresponding `stability' result was first established):
being regular~\cite{barr2}, coregular~\cite{DS}, additive~\cite{DS2}, abelian~\cite{DS2}, exact Mal'tsev with pushouts~\cite{BP}, coregular co-Mal'tsev~\cite{GP}, coextensive with pushouts~\cite{CPR}, and extensive~\cite{CPR}. We prove in this paper a general stability theorem, which includes all of the above examples and establishes stability of other fundamental exactness properties, such as being semi-abelian, regular Mal'tsev, coherent with finite coproducts, and many more. In some sense, our approach to proving the general stability theorem is analogous to the approach used in the particular cases mentioned above. The generality brings in heavy technicalities; these we have tackled using $2$-categorical calculus of natural transformations. As it can be expected, we use a generalization of the set-based case of a lemma from~\cite{DS} called the `uniformity lemma' (see also Lemma~5.1 in~\cite{makkai}); its detailed proof forms, in fact, a substantial part of the proof of our general stability theorem. And of course, we rely on classical results about pro-completion found in~\cite{AGV,GU}.

In order to formulate a general stability theorem, first we had to formalize the notion of an exactness property. Although the study of particular exactness properties is one of the main research directions in category theory, little has been done in terms of developing a general theory of exactness properties --- a theory that would be in similar relation to investigation of categories defined by particular exactness properties as, say, universal algebra is to investigation of various concrete algebraic structures. The recent work~\cite{GL} develops a unified approach to a certain type of exactness properties relevant mostly in logic and geometry. In~\cite{janelidze1, janelidze2, janelidze5}, first steps towards a unified approach to `algebraic' exactness properties were made (see also~\cite{janelidze6,jacqmin2}). The present work is a first step in studying exactness properties of both of these two types simultaneously, although our notion of an exactness property also has some limitations. Furthermore, we only take the theory as far as it is required for formulating and proving the stability theorem. A few topics for further investigation in the theory of exactness properties are suggested in the last section of the paper.

Our approach to formalizing the definition of an exactness property builds on the theory of sketches due to Ehresmann~\cite{ehresmann}. This is not surprising since, intuitively, an exactness property is a property of the behaviour of limits and colimits, whereas a sketch is the formal data of some limits and colimits. The key ingredient in our approach is the notion of an `exactness sequent'. It is a sequence of sketch inclusions
$$\cd{\X\ar[r]^-{\alpha} & \A\ar[r]^-{\beta} & \B}$$
which we abbreviate as $\alpha\vdash\beta$ to allude to its logical interpretation. Given a model $F$ of the sketch $\X$ in a category $\CC$, we define a `verification' of $\alpha\vdash\beta$ to be a map which assigns to each extension $G$ of $F$ along $\alpha$ an extension of $G$ along $\beta$. Most exactness properties of a category $\CC$ can be formalized as existence of verifications for sets of sequents, all of which start with the empty sketch $\X$. Thus, an exactness property of a category states that any $\A$-structure in the category admits a $\beta$-extension. For our theorem, we want verifications to be functorial, which is indeed the case in the main examples. When $\X$ is not the empty sketch, we get what can be seen as an exactness property of an internal structure in a category. This includes examples such as an internal monoid being an internal group, a morphism being the truth morphism for a subobject classifier, a split extension being a split-extension classifier in the sense of~\cite{BJK}, and others. Our approach to exactness properties does not cover all properties of a category that is of interest. It can rather be thought of as formalization of the so-called `first-order' exactness properties. An example of a `higher-order' exactness property would be the property of existence of enough projectives, whereas for an object $P$ to be a projective object would be a first-order exactness property of $P$ (see the last section of the paper for further remarks about the order of exactness properties). According to our stability theorem, not all but only certain first-order exactness properties are stable under the pro-completion. A counterexample is given by the exactness property of a morphism to be the truth morphism for a subobject classifier. Another counterexample is for a category to be exact in the sense of~\cite{barr}. We thus show that, under the conditions of our stability theorem, given a functorial verification of $\alpha\vdash\beta$ for an $\X$-structure $F$ in~$\CC$, there exists a functorial verification of $\alpha\vdash\beta$ for the image $\y F$ of $F$ under the (restricted) Yoneda embedding $\y\colon \CC \hookrightarrow \LC$. Moreover, the functorial verification of $\alpha\vdash\beta$ for $\y F$ can be chosen so that it is `coherent' with (agrees with) the functorial verification of $\alpha\vdash\beta$ for~$F$.

As far as applications of the stability theorem are concerned, we have the following:
\begin{itemize}
\item The stability theorem allows to apply categorical proofs involving colimits to categories which do not necessarily have colimits. 
This has been explained and used in~\cite{jacqminthesis,JR}. Roughly speaking, it goes as follows. Consider two exactness properties $P$ and $Q$ expressed in terms of finite limits. Suppose one has a proof that the implication $P\Rightarrow Q$ holds for any finitely complete and cocomplete category. An obvious question then arises: does this implication hold for any finitely complete category? If one can prove that the exactness property $P$ is stable under the pro-completion, we can proceed as follows: let $\CC$ be a finitely complete category satisfying $P$. By the axiom of universes, one can assume it is small. Then, its free cofiltered limit completion $\LC$ satisfies $P$. Since $\LC$ is complete and cocomplete, it also satisfies $Q$. And since the Yoneda embedding $\CC \hookrightarrow \LC$ preserves finite limits and all colimits, and reflects isomorphisms, one can usually show that $\CC$ also satisfies $Q$. It is worth mentioning that the example given in~\cite{JR} is quite involved and no direct proof of it has been found for now.

\item The stability theorem opens a way to new embedding theorems in categorical algebra. Barr proved and used in~\cite{barr2} a particular instance of the stability theorem for the property of being a regular category. This was a crucial step in proving his embedding theorem for regular categories. In a similar way, while this paper was under preparation, other particular instances of our stability theorem, together with the theory of `approximate operations' originating in~\cite{boujan mal,janelidze6}, enabled the first author to establish embedding theorems for other classes of categories such as regular Mal'tsev categories  in~\cite{jacqminthesis,jacqmin1,jacqmin2}. These theorems often provide a better technique for proving theorems in general categories than the one described above.
\end{itemize}

Finally, let us remark that applying our stability theorem to a particular exactness property is not always a straightforward task. The obvious presentation of the exactness property in terms of a set of sequents may not give sequents that fulfil the requirements in our theorem. Nevertheless, sometimes it becomes possible to appropriately reformulate the exactness property. When even that is not achievable, it may still be possible to slightly strengthen the property with other exactness properties and then give it a representation as a set of sequents admissible for the theorem. For instance, we do not know if our theorem can be applied to (finitely complete) Mal'tsev categories~\cite{CPP}, while it is applicable to regular Mal'tsev categories. This and some other examples of this nature are detailed at the end of the first section of the paper.

\begin{remark*}
Earlier unpublished draft versions of this paper have been cited as `Unconditional exactness properties' in~\cite{jacqminthesis,jacqmin1} and as `Functorial exactness properties' in~\cite{JR,jacqmin2}.
\end{remark*}

\subsection*{Acknowledgement}

The first author would like to thank Stellenbosch University for its kind hospitality during his first visit in 2014 when the present project started, and during his second visit in 2020. He also thanks the Belgian FNRS and the Canadian NSERC for their generous support. The second author is grateful to the South African NRF for its financial support. He is also grateful to the kind hospitality of University of Louvain-la-Neuve during his several visits when part of the collaboration on this paper took place.

\tableofcontents

\section{A formal approach to exactness properties}\label{section main theorem}

\subsection{Commutativity and convergence conditions}

Let $\G$ be a graph, i.e., a diagram $d,c\colon E\rightrightarrows V$ in $\Set$, the category of sets. By a \emph{path} in $\G$, we mean, as usual, an alternating sequence $(A_0,f_1,A_1,\dots,f_n,A_n)$ of vertices and arrows with $n\geqslant 0$, $d(f_i)=A_{i-1}$ and $c(f_i)=A_i$ for each $i\in\{1,\dots,n\}$. As in~\cite{borceux}, a \emph{commutativity condition} in $\G$ is a pair of paths $$((A_0,f_1,A_1,\dots,f_n,A_n),(B_0,g_1,B_1,\dots,g_m,B_m))$$ in $\G$ such that $A_0=B_0$ and $A_n=B_m$. We will represent it by $$f_n \cdots f_1 = g_m \cdots g_1$$ or by $$f_n\cdots f_1=1_{B_0}$$ if $m=0$ (and similarly if $n=0$).
A \emph{finite diagram} in $\G$ is given by a finite graph $\H$ together with a morphism of graphs $D\colon \H \to \G$. A \emph{finite limit condition} (respectively, a \emph{finite colimit condition}) in $\G$ is an equivalence class of 4-tuples $(\H,D,C,(c_H)_{H\in\H})$ where $D\colon \H \to \G$ is a finite diagram, $C$ is an object in $\G$ and for each object $H$ in $\H$, $c_H\colon C \to D(H)$ (respectively, $c_H\colon D(H)\to C$) is an arrow in $\G$. Two such 4-tuples $(\H,D,C,(c_H)_{H\in\H})$ and $(\H',D',C',(c'_{H'})_{H'\in\H'})$ are considered to be equivalent if $C=C'$ and if there exists an isomorphism of graphs $I\colon \H\to\H'$ such that $D'I=D$ and $c_H=c'_{I(H)}$ for any $H\in\H$. Such a condition $[(\H,D,C,(c_H)_{H\in\H})]$ will be represented by $$(C,(c_H)_H) = \limit (\H,D) \quad \text{(respectively, by } (C,(c_H)_H) = \colimit (\H,D) \text{).}$$
Finite limit conditions and finite colimit conditions are called \emph{convergence conditions}.

\subsection{Sketches}

The theory of sketches is due to Ehresmann~\cite{ehresmann}. Our approach to sketches differs slightly from his, but only at the level of presentation. We define an \emph{exactness sketch} (or simply a \emph{sketch}) as a finite graph equipped with a set of commutativity conditions and a set of convergence conditions. A \emph{morphism of sketches} is a morphism $\mu\colon \G \to \G'$ of underlying graphs of sketches which carries each commutativity condition on $\G$ to a commutativity condition on $\G'$ and each convergence condition on $\G$ to a convergence condition on $\G'$. With the obvious way of composing morphisms of sketches, we obtain the category $\Sk$ of sketches. The forgetful functor $$\GG\colon \Sk\to\Graph,$$ which maps each sketch to its underlying graph in the category $\Graph$ of finite graphs, is a `topological functor' (see e.g.~\cite{B84}), that is, both $\GG$ and $\GG^{\op}$ are fibrations~\cite{Gro59} whose fibres are complete lattices.

A \emph{subsketch} of a sketch $\B$ is a subgraph $\A$ of the underlying graph of $\B$, equipped with a sketch structure that turns the inclusion of graphs $\A\to\B$ into a sketch morphism. We will call such morphisms \emph{subsketch inclusions}. By a \emph{regular subsketch} of a sketch we mean a subsketch for which the corresponding subsketch inclusion $\beta\colon\A\to\B$ is a regular monomorphism in $\Sk$. It is easy to see that this is equivalent to $\beta$ being a cartesian morphism for the functor $\GG$ (in simpler terms, $\A$ inherits all conditions of $\B$ that can be expressed in $\A$).

\subsection{Exactness structures}\label{secA}

Given a sketch $\X$ and a category $\CC$, an \emph{exactness structure of type $\X$} (or simply, an \emph{$\X$-structure}) in $\CC$ is a morphism $F\colon \X\to\CC$ of graphs which carries each commutativity condition of $\X$ to an actual commutative diagram in $\CC$, and each finite limit/colimit condition of $\X$ to an actual limit/colimit in $\CC$. In a given category $\CC$, structures of the same type $\X$ and natural transformations between them form a category, under the usual composition of natural transformations. We denote this category by $\X\CC$. Every morphism $\varphi\colon \A\to\B$ of sketches gives rise to a functor $$\varphi_\CC\colon\B\CC\to\A\CC$$ of `composition with $\varphi$', defined by the mapping $F\mapsto F\circ\varphi$ for structures and a similar one for their transformations. For an $\A$-structure $G$ in $\CC$, we write $\B^\varphi_G\CC$ to denote the fibre of $\varphi_\CC$ at $G$.

\subsection{Exactness sequents}\label{secB}

The notion of an `exactness sequent' introduced here is new and it allows a formal approach to `(finitary) exactness properties'.
For a sketch $\X$, an \emph{exactness sequent of type $\X$} (or simply, an \emph{$\X$-sequent}) is a sequence
\begin{equation}\label{EquP}
\cd{\X\ar[r]^-{\alpha} & \A\ar[r]^-{\beta} & \B}
\end{equation}
of subsketch inclusions, abbreviated as
$$\alpha\vdash\beta.$$
Let $F$ be an $\X$-structure in a category $\mathbb{C}$. 
A \emph{verification} of an $\X$-sequent $\alpha\vdash\beta$ for $F$ is a right inverse of the object function of the restriction
$$\beta_F^{\alpha}\colon \B^{\beta\alpha}_F\CC\to \A^\alpha_F\CC$$
of the functor $\beta_\CC\colon\B\CC\to\A\CC$. An actual right inverse functor of the same functor is called a \emph{functorial verification} of $\alpha\vdash\beta$ for $F$.

We will sometimes write `$\alpha\vdash_F \beta$' as an abbreviation of the statement `there exists a verification of $\alpha\vdash \beta$ for $F$' and `$\alpha\vdash_F \beta$ functorially' for `there exists a functorial verification of $\alpha\vdash \beta$ for $F$'.

If $\X=\varnothing$ is the empty sketch and $\CC$ a category, there is a unique $\varnothing$-structure in $\CC$. A (functorial) verification of an $\varnothing$-sequent $\alpha\vdash\beta$ for this unique $\varnothing$-structure $F$ will be simply called a (functorial) verification of $\alpha\vdash\beta$ for $\CC$. We write in this case $\alpha\vdash_\CC\beta$ instead of $\alpha\vdash_F\beta$.

Notice that in general, an exactness sequent may have several (functorial) verifications for the same $\X$-structure. We will describe in Subsection~\ref{section constructible sequents} a particular case of exactness sequents for which this cannot happen, and moreover, for which verifications are always extendable to functorial verifications. In this case, existence of a (functorial) verification becomes a property. Classical exactness properties fall under this case with further $\X=\varnothing$.  Note that by allowing non-empty $\X$, we are generalizing exactness properties to internal structures in a category. See Subsection~\ref{section examples} for examples.

\subsection{Equivalent conditions for functorial verification}

Let us now prove two easy lemmas that will be used later.

\begin{lemma}\label{LemB}
Let $\beta\colon\A\to\B$ be a morphism of sketches which is injective on objects. Given a category $\CC$ and an isomorphism $i\colon \beta_\CC(H) \to G$ in $\A\CC$, there exists a $\B$-structure $E\in\B^\beta_G\CC$ and an isomorphism $j\colon H \to E$ in $\B\CC$ such that $\beta_\CC(j)=i$.
\end{lemma}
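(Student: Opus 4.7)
The plan is to construct $E$ by transporting the $\B$-structure $H$ along the isomorphism $i$ on the objects in the image of $\beta$, and to leave $H$ unchanged on the other objects. Since $\beta$ is injective on objects, each object $B$ of $\B$ which lies in the image of $\beta$ has a unique preimage $A$ in $\A$, so I can unambiguously set $E(B) = G(A)$ and $j_B = i_A \colon H(\beta(A)) \to G(A)$; for $B$ not in the image of $\beta$, I set $E(B) = H(B)$ and $j_B = 1_{H(B)}$. In either case $j_B$ is an isomorphism of $\CC$, because $i$ is an isomorphism in $\A\CC$ and isomorphisms in a category of diagrams with natural transformations are exactly the pointwise isomorphisms.

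On arrows, I define $E$ by the only possible formula compatible with naturality of $j$: for $f \colon B_1 \to B_2$ in $\B$, set
$$E(f) = j_{B_2} \circ H(f) \circ j_{B_1}^{-1}.$$
By construction $j \colon H \Rightarrow E$ is then a natural transformation between graph morphisms $\B \to \CC$, and its components are all isomorphisms. To see that $E\circ\beta = G$, and hence $\beta_\CC(j) = i$, I check the equality on an arrow $g\colon A_1 \to A_2$ of $\A$: using the definition of $E$ and the naturality of $i \colon H\circ\beta \Rightarrow G$,
$$E(\beta(g)) = j_{\beta(A_2)}\circ H(\beta(g))\circ j_{\beta(A_1)}^{-1} = i_{A_2}\circ (H\beta)(g)\circ i_{A_1}^{-1} = G(g).$$

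It remains to verify that $E$ is a $\B$-structure. For a commutativity condition $f_n\cdots f_1 = g_m\cdots g_1$ in $\B$, the telescoping $j_B^{-1}\circ j_B = 1$ in the formula for $E$ reduces each side to $j_{B_n}\circ H(f_n)\cdots H(f_1)\circ j_{B_0}^{-1}$ and $j_{B_0'}\circ H(g_m)\cdots H(g_1)\circ j_{B_0''}^{-1}$, which coincide because $H$ satisfies the condition. For a finite limit condition $(C,(c_K)_K)=\limit(\H,D)$ in $\B$, the natural isomorphism $j$ restricts to a natural isomorphism $H\circ D \Rightarrow E\circ D$, and limits are preserved under natural isomorphism: $(H(C),(H(c_K))_K)$ is a limit cone for $H\circ D$, and post/pre-composing with the isomorphisms $j_{DK}$ and $j_C^{-1}$ yields the cone $(E(C),(E(c_K))_K)$, which is therefore again a limit cone for $E\circ D$; the case of finite colimit conditions is dual. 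This shows $E\in\B^\beta_G\CC$ and $j\colon H\to E$ is the desired isomorphism.

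The argument is entirely routine transport along an isomorphism; the only place where the hypothesis on $\beta$ plays an explicit role is in ensuring the well-definedness of the assignment $B\mapsto E(B)$ for $B$ in the image of $\beta$, so I do not anticipate any real obstacle beyond keeping the bookkeeping straight between the two cases in the definition of $E$ and $j$.
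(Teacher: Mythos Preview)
Your proof is correct and follows essentially the same approach as the paper: both construct $E$ and $j$ by the same case split on whether an object lies in the image of $\beta$, define $E$ on arrows by conjugation with $j$, and conclude that $E$ is a $\B$-structure because it is naturally isomorphic to~$H$. The only cosmetic difference is that you spell out the verification of commutativity and convergence conditions explicitly, whereas the paper simply invokes the fact that a graph morphism naturally isomorphic to a $\B$-structure is again a $\B$-structure.
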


\begin{proof}
We denote by $\beta(\ob\A)$ the image in the set of objects of $\B$ of the object function part of $\beta$. Since $\beta$ is injective on objects, we can define $E$ on objects as
$$E(B)=\begin{cases} G(A), & \textrm{if }B=\beta(A), \\ H(B), & \textrm{if }B\notin\beta(\ob\A).\end{cases}$$
We can also define for an object $B$ in $\B$,
$$j_B=\begin{cases} i_A, & \textrm{if }B=\beta(A), \\ 1_{H(B)}, & \textrm{if }B\notin\beta(\ob\A).\end{cases}$$
For an arrow $b\colon B_1\to B_2$ in $\B$, we define $E(b)$ as $$E(b)=j_{B_2}H(b)j^{-1}_{B_1}.$$
Then $E$ is a graph morphism $E\colon \B\to\CC$ and $j_B$ is an isomorphism natural in $B$. Since $H$ is a $\B$-structure and $E$ is naturally isomorphic to it (via $j$), also $E$ is a $\B$-structure. It is also clear that $\beta_\CC(j)=i$.
By the definitions of $E$ and $j$, and by naturality of $i$, we get that for any arrow $a\colon A_1\to A_2$ in~$\A$, the following diagram commutes:
$$\xymatrix@=50pt{ E(\beta(A_1)) \ar[rrr]^{E(\beta(a))} \ar[dr]^-{j^{-1}_{\beta(A_1)}} \ar@{=}[ddr] & & & E(\beta(A_2)) \ar@{=}[ddl] \ar[dl]_-{j^{-1}_{\beta(A_2)}} \\ & H(\beta(A_1)) \ar[d]^-{i_{A_1}} \ar[r]^-{H(\beta(a))} \ar@{}[ur]|-{=} & H(\beta(A_2)) \ar[d]_-{i_{A_2}} & \\ & G(A_1) \ar[r]_-{G(a)} \ar@{}[ur]|-{=} & G(A_2) &}$$
This shows $E(\beta(a))=G(a)$. Thus, $E\in\B^\beta_G\CC$ and the proof is complete.
\end{proof}

\begin{lemma}\label{LemA}
An $\X$-sequent $\alpha\vdash\beta$ admits a functorial verification for an $\X$-structure $F$ in a category $\CC$ if and only if for each $G\in\A^\alpha_F\CC$ there exists $H_G\in\B\CC$, which depends on $G$ functorially (over the category $\A^\alpha_F\CC$) and for which there is an isomorphism $H_G\circ \beta\cong G$ natural in~$G$. 
\end{lemma}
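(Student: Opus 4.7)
The forward direction is essentially a tautology: if $V\colon \A^\alpha_F\CC\to\B^{\beta\alpha}_F\CC$ is a functorial verification, then setting $H_G = V(G)$ and letting the natural isomorphism be the identity (since $\beta_\CC(V(G))=G$ on the nose) does the job.

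For the backward direction, I would argue as follows. Assume we are given a functor $H\colon\A^\alpha_F\CC\to\B\CC$, $G\mapsto H_G$, together with a natural isomorphism $i\colon \beta_\CC\circ H \Rightarrow \mathrm{Id}$ in $\A\CC$. Since $\beta\colon \A\to\B$ is a subsketch inclusion, it is in particular injective on objects, so Lemma~\ref{LemB} applies componentwise: for each $G\in\A^\alpha_F\CC$, we can rectify $H_G$ to a $\B$-structure $E_G$ with $\beta_\CC(E_G)=G$ together with an isomorphism $j_G\colon H_G\to E_G$ in $\B\CC$ satisfying $\beta_\CC(j_G)=i_G$. One checks that $E_G$ automatically lies in $\B^{\beta\alpha}_F\CC$: applying $\alpha_\CC$ to the equality $\beta_\CC(E_G)=G$ gives $(\beta\alpha)_\CC(E_G)=\alpha_\CC(G)=F$.

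It remains to upgrade the object assignment $G\mapsto E_G$ to a functor $V\colon \A^\alpha_F\CC\to\B^{\beta\alpha}_F\CC$ that is a right inverse of $\beta_F^\alpha$. On a morphism $g\colon G_1\to G_2$ in $\A^\alpha_F\CC$, I would define
$$V(g)=j_{G_2}\circ H(g)\circ j_{G_1}^{-1}\colon E_{G_1}\to E_{G_2}.$$
Functoriality of $V$ follows immediately from functoriality of $H$ and the fact that the $j_G$ cancel telescopically. The key verification is that $\beta_\CC(V(g))=g$, which follows by applying $\beta_\CC$ to the defining formula, using $\beta_\CC(j_G)=i_G$, and then the naturality square of $i$ at $g$:
$$\beta_\CC(V(g))=i_{G_2}\circ\beta_\CC(H(g))\circ i_{G_1}^{-1}=g\circ i_{G_1}\circ i_{G_1}^{-1}=g.$$

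Since each step is essentially a direct computation, I do not expect a serious obstacle here; the only subtlety is being careful that the fibre condition $\B^{\beta\alpha}_F\CC$ (and not merely $\B^\beta_G\CC$) is respected, but this is automatic from the equality $\alpha_\CC(G)=F$ that holds by assumption on $G$. So the content of the lemma is really just that Lemma~\ref{LemB} produces strict-fibre representatives in a way that can be transported along morphisms by conjugation with the $j_G$.
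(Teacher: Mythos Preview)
Your proof is correct and follows essentially the same approach as the paper's: apply Lemma~\ref{LemB} componentwise to rectify each $H_G$ to a strict lift $E_G$, then transport morphisms by conjugation with the resulting isomorphisms $j_G$, using naturality of $i$ to verify that $\beta_\CC$ returns the original morphism. The only addition you make is spelling out explicitly why $E_G$ lands in the fibre $\B^{\beta\alpha}_F\CC$, which the paper leaves implicit.
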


\begin{proof}
The `only if' part is obvious. For the `if part', let $i^G$ denote the isomorphism $H_G\circ\beta \to G$ for each $G\in\A^\alpha_F\CC$. Let also $j^G\colon H_G \to E(G)$ be the isomorphism given by Lemma~\ref{LemB}. Finally, for a morphism $m\colon G_1\to G_2$ in $\A^\alpha_F\CC$, we set $$E(m)=j^{G_2}H_m(j^{G_1})^{-1}.$$
This defines a functor $$E\colon \A^\alpha_F\CC\to \B^{\beta\alpha}_F\CC$$
which is a right inverse of the functor $\beta_F^{\alpha}$ since $\beta_\CC(j^G)=i^G$ for each $G\in\A^\alpha_F\CC$ and $i^G$ is natural in~$G$:
$$\beta_F^{\alpha}(E(m))=\beta_\CC(j^{G_2})\beta_\CC(H_m)\beta_\CC(j^{G_1})^{-1}=i^{G_2}\beta_\CC(H_m)(i^{G_1})^{-1}=m$$
for each morphism $m\colon G_1\to G_2$ in $\A^\alpha_F\CC$.
\end{proof}

\subsection{Unconditionality} 

To each finite category $\AA$ we associate a sketch $\A$, called the \emph{underlying sketch} of $\AA$. The underlying graph of $\A$ is the same as the underlying graph of $\AA$ and the commutativity conditions are given by
\begin{itemize}
\item $((A,f,B,g,C),(A,gf,C))$ for any pair of composable arrows $f\colon A \to B$ and $g\colon B\to C$ in $\AA$;
\item $((A,1_A,A),(A))$ for any object $A$ in $\AA$.
\end{itemize}
There are no convergence conditions on $\A$.

A subsketch inclusion $\alpha\colon \X\to\A$ is said to be \emph{unconditional of finite kind} if $\A$ is the underlying sketch of a finite category further equipped with the commutativity and convergence conditions which already appear in $\X$ (and no others). The stability theorem will concern exactness sequents $\alpha\vdash\beta$ where $\alpha$ is unconditional of finite kind.

\subsection{Constructibility}\label{section constructible sequents}

Before giving some concrete examples of exactness sequents, let us consider a particular case for which functoriality of the right inverse of $\beta^\alpha_F$ is a consequence of the fact that $\beta^\alpha_F$ is essentially surjective.

A subsketch inclusion $\beta\colon\A\to\B$ is said to be \emph{constructible} if it is the composite of a finite sequence
$$\cd{\A\ar[r] & \bullet\ar[r] & \bullet\ar[r] & \cdots\ar[r] &\B  }$$
of subsketch inclusions, where every next subsketch of $\B$ is obtained from the previous one by any one of the following procedures:
\begin{itemize}
\item include some commutativity and convergence conditions from $\B$ expressed using objects and arrows which belong to the subsketch; 

\item include an arrow $f$ from $\B$ and a commutativity condition $f=g_n\cdots g_1$ from $\B$, for $n\geqslant 0$ and existing arrows $g_1,\dots,g_n$ in the subsketch;

\item include an object $C$ from $\B$, not already in the subsketch, together with the arrows $c_H$ and the condition $(C,(c_H)_H)=\limit (\H,D)$ from $\B$, where $D$ is a finite diagram in the subsketch;

\item include an object $C$ from $\B$, not already in the subsketch, together with the arrows $c_H$ and the condition $(C,(c_H)_H)=\colimit (\H,D)$ from $\B$, where $D$ is a finite diagram in the subsketch;

\item given in the subsketch a condition $(C,(c_H)_H)=\limit (\H,D)$, an object $X$, a family $(x_H\colon X\to D(H))_{H\in\H}$ of arrows and commutativity conditions $D(h)\cdot x_H =  x_{H'}$ for each arrow $h\colon H\to H'$ in $\H$, include from $\B$ an arrow $f\colon X\to C$ and commutativity conditions $c_H \cdot f = x_H$ for each object $H\in\H$;

\item given in the subsketch a condition $(C,(c_H)_H)=\colimit (\H,D)$, an object $X$, a family $(x_H\colon D(H)\to X)_{H\in\H}$ of arrows and commutativity conditions $x_{H'} \cdot D(h) = x_H$ for each arrow $h\colon H\to H'$ in $\H$, include from $\B$ an arrow $f\colon C\to X$ and commutativity conditions $f \cdot c_H = x_H$ for each object $H\in\H$.
\end{itemize}

\begin{lemma}\label{lemma on unconditional exactness properties}
For any constructible subsketch inclusion $\beta\colon\A\to\B$ and any category $\CC$, the functor $\beta_\CC$ is full and faithful. Moreover, for any $\A$-structure $G$ in $\CC$, if there exists a $\B$-structure $H$ in $\CC$ such that $\beta_\CC(H)$ is isomorphic to $G$, then the fibre $\B^\beta_G\CC$ is non-empty.
\end{lemma}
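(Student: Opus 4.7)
My plan is to decompose $\beta$ into its elementary construction steps and handle each of the six types separately. Write $\beta$ as a finite composite of subsketch inclusions
$$\A=\A_0\hookrightarrow\A_1\hookrightarrow\cdots\hookrightarrow\A_n=\B,$$
where each step performs exactly one of the six listed constructions. Since full and faithful functors compose, the first claim reduces to showing that each $(\A_i\hookrightarrow\A_{i+1})_\CC$ is full and faithful.

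For the non-empty fibre claim I would simply invoke Lemma~\ref{LemB}: every constructible $\beta$ is an inclusion of underlying graphs, hence injective on objects, so given $H\in\B\CC$ and an isomorphism $\beta_\CC(H)\cong G$ in $\A\CC$ the lemma supplies an $E\in\B^\beta_G\CC$ directly. No decomposition is needed for this part.

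The six elementary cases are handled as follows. Adding commutativity or convergence conditions makes $\A_{i+1}\CC$ a full subcategory of $\A_i\CC$, so the inclusion is trivially fully faithful. Adding an arrow $f=g_n\cdots g_1$ introduces no new object-component in natural transformations, while the value $F(f)$ is forced by the new commutativity and naturality at $f$ follows from naturality at the $g_j$'s; hence the induced functor is even an isomorphism of categories onto its image. The fifth and sixth cases (adding a unique morphism into a limit, or out of a colimit) similarly add no new component; the naturality square at the new arrow commutes because, after post-composing with the legs $c_H$ (respectively pre-composing), both sides agree by naturality at the $x_H$'s, and the universal property of the limit (respectively colimit) then forces the two sides to be equal.

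The main technical cases are the third and fourth, where a new object $C$ is added as a limit or colimit of a diagram $D$ in $\A_i$. For faithfulness, if $\tau,\tau'\colon F_1\to F_2$ agree on $\A_i$, then the naturality squares at the legs $c_H$ show that $\tau_C$ and $\tau'_C$ both mediate the same cone into $F_2(C)$ (or out of it), so uniqueness in the universal property forces $\tau_C=\tau'_C$. For fullness, given $\sigma$ on $\A_i$ one defines $\tau_C$ as the unique morphism obtained from the universal property of $F_2(C)$ applied to the cone $\bigl(\sigma_{D(H)}\circ F_1(c_H)\bigr)_H$; the cone condition is verified by combining naturality of $\sigma$ at each $D(h)$ with the fact that $F_1$ sends the limit condition to an actual limit, and the so-defined $\tau_C$ then automatically makes the needed naturality squares commute. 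I do not expect any serious obstacle beyond careful bookkeeping of these six cases.
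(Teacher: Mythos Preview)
Your proposal is correct. For the first claim (full and faithful), your decomposition into the six elementary steps is exactly the approach the paper takes---the paper's proof consists of a single sentence saying the claim ``reduces to easy verification'' for each elementary step, and you have supplied precisely that verification in appropriate detail.

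For the second claim (non-emptiness of the fibre), your route differs from the paper's. The paper handles both claims uniformly by reducing to the elementary steps; you instead observe that any subsketch inclusion is injective on objects and invoke Lemma~\ref{LemB} directly to produce $E\in\B^\beta_G\CC$ from $H$ and the given isomorphism. This is shorter and avoids the need to chain the non-empty-fibre property across a composite (which, in the paper's approach, implicitly relies on the already-established full-faithfulness of the intermediate restriction functors to transport the isomorphism up one level at a time). Your shortcut is perfectly legitimate here since Lemma~\ref{LemB} has already been proved, and it buys you a cleaner argument; the paper's step-by-step route, on the other hand, makes the proof self-contained within the constructibility framework and would yield slightly more information (an explicit inductive description of a point in the fibre) if one ever needed it.
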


\begin{proof}
In view of the above definition, this reduces to easy verification that the stated properties hold for any subsketch inclusion $\beta\colon\A\to\B$, where $\A$ is a subsketch of $\B$ such that applying one of the above procedures results in $\B$.
\end{proof}

In particular, if
$$\cd{\X\ar[r]^-{\alpha} & \A\ar[r]^-{\beta} & \B}$$
is an exactness sequent with $\beta$ a constructible subsketch inclusion and if $F$ is an $\X$-structure in a category $\CC$, the functor $\beta_{\CC}\colon\B\CC\to\A\CC $ restricts to a full and faithful functor
$$\beta^\alpha_F\colon \B^{\beta\alpha}_F\CC \to \A^\alpha_F\CC.$$
Moreover, $F$ admits a functorial verification of $\alpha\vdash\beta$ if and only if $\beta^\alpha_F$ is essentially surjective on objects (and hence an equivalence of categories), proving the following lemma.

\begin{lemma}\label{lemma verification constructible}
Let $\cd{\X\ar[r]^-{\alpha} & \A\ar[r]^-{\beta} & \B}$ be an exactness sequent such that $\beta$ is a constructible subsketch inclusion. For an $\X$-structure $F$ in a category $\CC$, the following statements are equivalent
\begin{enumerate}[(i)]
\item $F$ admits a functorial verification of $\alpha\vdash\beta$, i.e., $\alpha\vdash_F\beta$ functorially;
\item $F$ admits a verification of $\alpha\vdash\beta$, i.e., $\alpha\vdash_F\beta$;
\item for all $\A$-structure $G$ in $\CC$ such that $G \circ \alpha = F$, there exists a $\B$-structure $H$ in $\CC$ such that $H \circ \beta$ is isomorphic to $G$.
\end{enumerate}
Moreover, if those conditions hold, there are, up to isomorphisms, exactly one verification and exactly one functorial verification of $\alpha\vdash\beta$ for $F$.
\end{lemma}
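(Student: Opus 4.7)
The plan is to leverage Lemma~\ref{lemma on unconditional exactness properties}, which already supplies the two ingredients needed: full faithfulness of $\beta_\CC$ (and hence of its restriction $\beta^\alpha_F$), together with the strict lifting property that whenever $\beta_\CC(H)\cong G$ there is some $H'$ with $\beta_\CC(H')=G$. The implication (i)$\Rightarrow$(ii) is tautological, and for (ii)$\Rightarrow$(iii) I would simply note that a verification $V$ provides, for each $G\in\A^\alpha_F\CC$, an element $V(G)\in\B^{\beta\alpha}_F\CC$ satisfying $V(G)\circ\beta=G$ on the nose, hence a fortiori up to isomorphism.

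The main work will be in (iii)$\Rightarrow$(i). For each $G\in\A^\alpha_F\CC$, condition (iii) combined with the second assertion of Lemma~\ref{lemma on unconditional exactness properties} would produce a $\B$-structure $H_G$ with $H_G\circ\beta=G$; then $H_G\circ\beta\circ\alpha=G\circ\alpha=F$, placing $H_G$ in $\B^{\beta\alpha}_F\CC$. To upgrade the object assignment $G\mapsto H_G$ to a functor, I would invoke full faithfulness of $\beta_\CC$: for each $m\colon G_1\to G_2$ in $\A^\alpha_F\CC$, there would exist a unique $E(m)\colon H_{G_1}\to H_{G_2}$ in $\B\CC$ with $\beta_\CC(E(m))=m$. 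Applying $\alpha_\CC$ gives $\alpha_\CC(\beta_\CC(E(m)))=\alpha_\CC(m)=\mathrm{id}_F$, so $E(m)$ lies in $\B^{\beta\alpha}_F\CC$; faithfulness of $\beta_\CC$ then forces $E$ to preserve composition and identities, yielding the desired functorial verification. (Alternatively, one could package this step via Lemma~\ref{LemA}, using the trivial identities $H_G\circ\beta=G$ as the required isomorphisms.)

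For uniqueness, given two functorial verifications $E_1,E_2$, at each $G$ one has $\beta_\CC(E_1(G))=G=\beta_\CC(E_2(G))$, so full faithfulness produces a unique morphism $\phi_G\colon E_1(G)\to E_2(G)$ lying over $\mathrm{id}_G$; the symmetric construction shows $\phi_G$ is an isomorphism, and naturality of $\phi$ in $G$ reduces once again to faithfulness of $\beta_\CC$. The same pointwise argument would settle uniqueness up to isomorphism for ordinary verifications. The one place requiring minor care throughout the argument is the bookkeeping that morphisms obtained via full faithfulness indeed lie in the fibre $\B^{\beta\alpha}_F\CC$, which is immediate from the fact that morphisms in the fibre $\A^\alpha_F\CC$ are precisely those whose image under $\alpha_\CC$ is $\mathrm{id}_F$.
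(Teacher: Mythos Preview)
Your proposal is correct and follows essentially the same approach as the paper. The paper's own argument is the short paragraph immediately preceding the lemma: it observes that, by Lemma~\ref{lemma on unconditional exactness properties}, the restriction $\beta^\alpha_F\colon \B^{\beta\alpha}_F\CC \to \A^\alpha_F\CC$ is full and faithful, so that a functorial verification exists precisely when $\beta^\alpha_F$ is essentially surjective (equivalently, an equivalence); the second clause of Lemma~\ref{lemma on unconditional exactness properties} upgrades essential surjectivity to strict surjectivity on objects, and then the right inverse functor is built exactly as you describe. Your write-up simply spells out in more detail the standard construction of a section of a full, faithful, object-surjective functor, together with the uniqueness argument, which the paper leaves implicit.
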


\subsection{Dual sequents}\label{section dual sequents}

Each sketch $\Z$ gives rise to a dual sketch $\Z^{\op}$: the underlying graph of $\Z^{\op}$ is the dual of the underlying graph of $\Z$, each commutativity condition $f_n \cdots f_1 = g_m \cdots g_1$ in $\Z$ is turned into a condition $f_1^{\op} \cdots f_n^{\op}=g_1^{\op} \cdots g_m^{\op}$ in $\Z^{\op}$, each finite limit condition $(C,(c_H)_H)=\limit (\H,D)$ in $\Z$ is turned into a finite colimit condition $(C,(c_H^{\op})_H)=\colimit (\H^{\op},D^{\op})$ in $\Z^{\op}$ and vice-versa. As usual, a morphism of sketches $\mu\colon \V\to\Z$ gives rise to a morphism $\mu^{\op}\colon\V^{\op}\to\Z^{\op}$ between the dual sketches. Similarly, each $\Z$-structure $G$ in a category $\CC$ can be turned into a $\Z^{\op}$-structure $G^{\op}$ in~$\CC^{\op}$. This gives an isomorphism of categories $\Z^{\op}(\CC^{\op})\cong (\Z\CC)^{\op}$. It is then not hard to see that given an exactness sequent $\alpha\vdash\beta$ as in~(\ref{EquP}) and an $\X$-structure $F$ in $\CC$, we have $\alpha\vdash_F\beta$ if and only if $\alpha^{\op}\vdash_{F^{\op}}\beta^{\op}$, and $\alpha\vdash_F\beta$ functorially if and only if $\alpha^{\op}\vdash_{F^{\op}}\beta^{\op}$ functorially. Notice also that a subsketch inclusion $\alpha$ is unconditional of finite kind if and only if $\alpha^{\op}$ is. Moreover, a subsketch inclusion $\beta$ is regular (respectively, constructible) if and only if $\beta^{\op}$ is.

\subsection{Concrete examples}\label{section examples}

The selection of examples of exactness properties included here are for illustration only and by no means do we provide a comprehensive list of examples. Investigation of exactness properties is one of the central activities in research in category theory. New exactness properties have been arising in the literature since the birth of the subject of category theory in~\cite{eilmac}. The first exactness properties expressed properties of the modern-day notion of an abelian category, and go back to~\cite{maclane}. Our list of examples contains a selection from classical exactness properties to ones arising in recent literature. We do not claim any priority of these examples over others that have not been mentioned in this paper.

In this section we will show how certain exactness properties can be concretely represented by exactness sequents $\alpha\vdash\beta$. Unless stated otherwise, in all of these sequents, $\alpha$ will be unconditional of finite kind and $\beta$ will be constructible.

\begin{example}\label{example isomorphism} In this example, we describe an exactness sequent that encodes the property of a morphism to be an isomorphism. If $\X$ is the sketch $$\cd{A \ar[r]^-{f} & B}$$ with no conditions, an $\X$-structure in a category $\CC$ is just a morphism in that category. Let $\A$ be the underlying sketch of the arrow category: $$\cd{A \ar[r]^-{f} \ar@(ul,dl)_-{1_A} & B \ar@(ur,dr)^-{1_B}}$$ Let $\B$ be constructed by adding to $\A$ the finite limit condition $(A,(f))=\limit (\H_{\Ob},D_{\Ob}^B)$ where $\H_{\Ob}$ is the graph with $W$ as unique object and without any arrows and $D_{\Ob}^B \colon \H_{\Ob} \to \GG(\A)$ is defined by $D_{\Ob}^B(W)=B$. Then, an $\X$-structure $F$ in the category $\CC$ (i.e., a morphism $F(f)$ in $\CC$) admits a (functorial) verification of $\alpha\vdash\beta$ exactly when the cone $\cd{F(A) \ar[r]^-{F(f)} & F(B)}$ is a limit over the single object diagram $F(B)$, that is, when $F(f)$ is an isomorphism.
\end{example}

\begin{example} In this example, we describe an exactness sequent that encodes the property of a morphism to be a monomorphism.
The sketches $\X$ and $\A$ are here as in Example~\ref{example isomorphism}. Now, the sketch $\B$ is obtained by adding to $\A$ the convergence condition $(A,(1_A,1_A,f)) = \limit (\H_{\Pb},D_{\Pb}^{f,f})$
where $\H_{\Pb}$ is the graph $$\cd{&W_2 \ar[d]^-{w_2} \\ W_1 \ar[r]_-{w_1} & W_3}$$
and $D_{\Pb}^{f,f} \colon \H_{\Pb} \to \GG(\A)$ is defined via $D_{\Pb}^{f,f}(w_1)=D_{\Pb}^{f,f}(w_2)=f$. In this case, an $\X$-structure $F$ in~$\CC$ (i.e., a morphism $F(f)$ in $\CC$) admits a (functorial) verification of $\alpha\vdash\beta$ if and only if the square $$\cd{F(A) \ar[r]^-{1_{F(A)}} \ar[d]_-{1_{F(A)}} & F(A) \ar[d]^-{F(f)} \\ F(A) \ar[r]_-{F(f)} & F(B)}$$
is a pullback, that is, when the morphism $F(f)$ is a monomorphism.
\end{example}

In the forthcoming examples, we will specify sketches and categories by incomplete drawings according to the following rules:
\begin{itemize}
\item We may omit the identity arrows in the drawings of categories and in the drawings of graphs that contain underlying graphs of categories as subgraphs. 

\item We may omit the composite morphisms of morphisms which are already displayed in the drawings of categories and in the drawings of graphs that contain underlying graphs of categories as subgraphs.

\item We may display the convergence conditions of sketches by listing the (co)limits they represent. In particular, in a sketch $\Z$ with underlying graph $\G$, we will use the usual abbreviations~\ref{abb iso}--\ref{abb cokernel pair} set out below.

\item For the abbreviations~\ref{abb equalizer}--\ref{abb cokernel pair}, the arrow $h$ will often be omitted in the display of~$\G$.
\end{itemize}
The abbreviations for displaying convergence conditions in a sketch are:
\begin{enumerate}[1.]
\item\label{abb iso} `$f\colon A\to B$ represents an isomorphism' means the condition $(A,(f))=\limit (\H_{\Ob},D_{\Ob}^B)$ where $\H_{\Ob}$ is the graph with one object $W$ and no arrows and $D_{\Ob}^B\colon\H_{\Ob}\to\G$ is defined by $D_{\Ob}^B(W)=B$.   
\item\label{abb terminal} `$A$ represents the terminal object' means the condition $(A,\varnothing)=\limit (\varnothing,D_{!})$ where $D_{!}\colon \varnothing\to\G$ is the unique graph morphism from the empty graph $\varnothing$ to~$\G$.
\item\label{abb initial} `$A$ represents the initial object' means the condition $(A,\varnothing)=\colimit (\varnothing,D_{!})$ where $D_{!}\colon \varnothing\to\G$ is as above.
\item\label{abb product} `$(P,p_1,p_2)$ represents the product of $A$ and $B$' means the condition $(P,(p_1,p_2))=\limit (\H_{\Prod},D_{\Prod}^{A,B})$ where $\H_{\Prod}$ is the graph with two objects $W_1$ and $W_2$ and no arrows, and $D_{\Prod}^{A,B}\colon\H_{\Prod}\to\G$ is defined by $D_{\Prod}^{A,B}(W_1)=A$ and $D_{\Prod}^{A,B}(W_2)=B$.
\item\label{abb coproduct} `$(C,i_1,i_2)$ represents the coproduct of $A$ and $B$' means the condition $(C,(i_1,i_2))=\colimit (\H_{\Coprod},D_{\Coprod}^{A,B})$ where $\H_{\Coprod}=\H_{\Prod}$ and $D_{\Coprod}^{A,B}=D_{\Prod}^{A,B}$ are as above.
\item\label{abb equalizer} `$(E,e)$ represents the equalizer of $f$ and $g$' means, for some suitable arrow $h$, the condition $(E,(e,h))=\limit (\H_{\Eq},D_{\Eq}^{f,g})$ where $\H_{\Eq}$ is the graph $$\cd{W_1 \ar@<4pt>[r]^-{w_1} \ar@<-3pt>[r]_-{w_2} & W_2}$$
and $D_{\Eq}^{f,g}\colon\H_{\Eq}\to\G$ is defined via $D_{\Eq}^{f,g}(w_1)=f$ and $D_{\Eq}^{f,g}(w_2)=g$.
\item\label{abb coequalizer} `$(Q,q)$ represents the coequalizer of $f$ and $g$' means, for some suitable arrow $h$, the condition $(Q,(h,q))=\colimit (\H_{\Coeq},D_{\Coeq}^{f,g})$ where $\H_{\Coeq}=\H_{\Eq}$ and $D_{\Coeq}^{f,g}=D_{\Eq}^{f,g}$ are as above.
\item\label{abb pullback} `$(P,p_1,p_2)$ represents the pullback of $f$ along $g$' means, for some suitable arrow $h$, the condition $(P,(p_1,p_2,h))=\limit (\H_{\Pb},D_{\Pb}^{f,g})$ where $\H_{\Pb}$ is the graph $$\cd{&W_2 \ar[d]^-{w_2} \\ W_1 \ar[r]_-{w_1} & W_3}$$ and $D_{\Pb}^{f,g} \colon \H_{\Pb} \to \G$ is defined via $D_{\Pb}^{f,g}(w_1)=f$ and $D_{\Pb}^{f,g}(w_2)=g$.
\item\label{abb pushout} `$(Q,q_1,q_2)$ represents the pushout of $f$ along $g$' means, for some suitable arrow $h$, the condition $(Q,(q_1,q_2,h))=\colimit (\H_{\Po},D_{\Po}^{f,g})$ where $\H_{\Po}$ is the graph $$\cd{W_3 \ar[r]^-{w_2} \ar[d]_-{w_1} &W_2 \\ W_1 &}$$ and $D_{\Po}^{f,g} \colon \H_{\Po} \to \G$ is defined via $D_{\Po}^{f,g}(w_1)=f$ and $D_{\Po}^{f,g}(w_2)=g$.
\item\label{abb kernel pair} `$(R,r_1,r_2)$ represents the kernel pair of $f$' means `$(R,r_1,r_2)$ represents the pullback of $f$ along $f$'.
\item\label{abb cokernel pair} `$(Q,q_1,q_2)$ represents the cokernel pair of $f$' means `$(Q,q_1,q_2)$ represents the pushout of $f$ along $f$'.
\end{enumerate}

\begin{example}\label{existence of a binary product} In this example, we describe an exactness sequent that encodes the property of two objects to have a product.
Let $\X=\A$ be the underlying sketch of the category with two objects $X$ and $Y$ and no non-identity arrows. Let $\beta$ be the inclusion of $\A$ in the sketch
$$\begin{array}{cl} 
\vcenter{\cd{& P \ar[ld]_-{p_1} \ar[rd]^-{p_2} & \\ X && Y}} & \begin{array}{l} \text{conditions from }\A\text{ together with:} \\ (P,p_1,p_2) \text{ represents the product of }X \text{ and }Y.\end{array}\end{array}$$
An $\X$-structure is just the data of two objects and it admits a (functorial) verification of $\alpha\vdash\beta$ if and only if their product exists.
\end{example}

\begin{example} In this example, we describe an exactness sequent that encodes the property of a category to have all binary products. 
Let $\X$ be the empty sketch and let $\A$ and $\B$ be as in the Example~\ref{existence of a binary product}. Then, a category $\CC$ admits a (functorial) verification of $\alpha\vdash\beta$ exactly when it has all binary products.
\end{example}

The aim of the next example is to warn the reader that, in a convergence condition $(C,(c_H)_H) = \limit (\H,D)$, the arrows $c_H$ are not required to be pairwise distinct.

\begin{example} In this example, we describe an exactness sequent that encodes the property of an object to have at most one morphism from each object to it.
Let $\X=\A$ be the underlying sketch of the category with a single object $X$ and no non-identity arrows. Let $\beta$ be the inclusion of $\A$ in the sketch
$$\begin{array}{cl}
\vcenter{\cd{P \ar[r]^-{p} & X}} & \begin{array}{l} \text{conditions from }\A\text{ together with:} \\ (P,p,p) \text{ represents the product of }X \text{ and }X.\end{array}\end{array}$$
Then, for an $\X$-structure $F$ in a category $\CC$ (i.e., an object $F(X)$ of $\CC$), we have $1_\X\vdash_F\beta$ (functorially) if and only if, for each object $A$ in $\CC$, there is at most one morphism $A \to F(X)$.
\end{example}

\begin{example}\label{example reflexivity} In this example, we describe two exactness sequents both of which encode the property of a binary relation to be reflexive.
Let $\X=\A$ be the underlying sketch of the category
$$\cd{R \ar@<4pt>[r]^-{r_1} \ar@<-3pt>[r]_-{r_2} & X}$$
equipped with the convergence condition attesting that $(R,r_1,1_R,r_2,1_R)$ represents the limit of the outer square in the following diagram.
$$\cd{X && R \ar[ll]_-{r_1} \ar[dd]^-{r_2} \\ & R \ar[lu]_-{r_1} \ar[ru]_-{1_R} \ar[rd]_-{r_2} \ar[ld]_-{1_R}& \\ R \ar[uu]^-{r_1} \ar[rr]_-{r_2} && X}$$
An $\X$-structure $F$ in a category $\CC$ is a relation $F(R) \colon F(X) \nrightarrow F(X)$. We can express the condition that this relation is reflexive as the condition $1_{\X}\vdash_F\beta$ (functorially) for an exactness sequent $1_{\X}\vdash\beta$ where $\beta$ is the inclusion of $\A=\X$ in the sketch $\B$ given by
$$\begin{array}{cl}
\vcenter{\cd{R \ar@<5pt>[r]^-{r_1} \ar@<-2pt>[r]_-{r_2} & X \ar@/^1.1pc/@<2pt>[l]^-{e}}} & \begin{array}{l}
\text{conditions from }\A\text{ together with:} \\
r_1 \cdot e = 1_X,\\
r_2 \cdot e = 1_X.\end{array}\end{array}$$
Note that, in contrast with the above examples, $\beta$ is not here a constructible subsketch inclusion.
However, the existence of a verification is still equivalent to the existence of a functorial verification and such verifications are uniquely determined. Nonetheless, there is a way of presenting the reflexivity of a relation as the condition $1_{\X}\vdash_F\beta$ (functorially) for an exactness sequent $1_{\X}\vdash\beta$ where $\beta$ is constructible: Let again $\A=\X$ be as above but let now $\B$ be the sketch
$$\begin{array}{cl}
\vcenter{\cd{L \ar[d]_-{l_1} \ar@<1pt>[rd]^-{l_2} & \\ R \ar@<3pt>[r]^-{r_1} \ar@<-4pt>[r]_-{r_2} & X}} &
\begin{array}{l}
\text{conditions from }\A\text{ together with:} \\
(L,l_1,l_2) \text{ represents the equalizer of }r_1 \text{ and }r_2,\\
l_2\colon L\to X\text{ represents an isomorphism}.\end{array}\end{array}$$
A relation $F(R)$ admits a (functorial) verification of $\alpha\vdash\beta$ when the equalizer of $F(r_1)$ and $F(r_2)$ exists and is such that $F(l_2)$ is an isomorphism. This happens exactly when $F(R)$ is a reflexive relation.
\end{example}

From now on we will treat only those examples where $\X=\varnothing$ is the empty sketch. We recall that a (functorial) verification of an exactness sequent of type $\varnothing$ for a category $\CC$ is just a (functorial) verification for the unique $\varnothing$-structure in $\CC$. Since in all these examples, unless stated otherwise, $\alpha$ is unconditional of finite kind, instead of describing the sketch~$\A$, we will describe the (unique) finite category $\AA$ whose underlying sketch is~$\A$.

\begin{example} We show that the property of a category being regular~\cite{barr} is equivalent to the property of admitting (functorial) verifications of some exactness sequents (of type~$\varnothing$) $\alpha\vdash\beta$ with $\alpha$ being unconditional of finite kind and $\beta$ being constructible. With the exactness sequent represented by
$$\begin{array}{c|ccl}
\AA && \B & \\ &  && \\ \varnothing & 1 && \text{where }1 \text{ represents the terminal object,} 
\end{array}$$
one describes the property of having a terminal object. The exactness sequent represented by
$$\begin{array}{c|ccl}
\AA && \B & \\ &  && \\
\vcenter{\cd{&Y \ar[d]^-{g} \\ X \ar[r]_-{f} & Z }} & \vcenter{\cd{P \ar[d]_-{p_1} \ar[r]^-{p_2} & Y \ar[d]^-{g} \\ X \ar[r]_-{f}  & Z }} && \begin{array}{l} \text{conditions from }\A\text{ together with:} \\ (P,p_1,p_2) \text{ represents the pullback of } f \text{ along }g,\end{array}
\end{array}$$
expresses the property of having pullbacks. It remains to find an exactness sequent satisfying the required properties and describing a property which is equivalent, in the presence of finite limits, to the property of having coequalizers of kernel pairs and pullback stable regular epimorphisms. This can be done via the subsketch inclusion represented below.
$$\begin{array}{c|c}
\AA & \B \\ &  \\ \\ \cd{\\&& Y \ar[dd]^-{g} \\ \\ X \ar[rr]_-{f} && Z} &
\cd{&& P \ar[rd]^-{i'} \ar[dd]_-{g'} & \\S \ar@<4pt>[r]^-{s_1} \ar@<-3pt>[r]_-{s_2} & P' \ar[ru] \ar@{}@<-3pt>[ru]^(.5){p'} \ar[dd]_-{g''} && Y \ar[dd]^-{g} \\  && I \ar[rd]^-{i}& \\ R \ar@<4pt>[r]^-{r_1} \ar@<-3pt>[r]_-{r_2} & X \ar[rr]_-{f} \ar[ru]^-{p} && Z} \\ \\
&  \\
& \begin{array}{l}
\text{conditions from }\A\text{ together with:} \\
(R,r_1,r_2) \text{ represents the kernel pair of }f,\\
(I,p) \text{ represents the coequalizer of } r_1 \text{ and } r_2,\\
i \cdot p = f,\\
(P,g',i') \text{ represents the pullback of } i \text{ along } g,\\
(P',g'',p') \text{ represents the pullback of } p \text{ along } g',\\
(S,s_1,s_2) \text{ represents the kernel pair of }p',\\
(P,p') \text{ represents the coequalizer of } s_1 \text{ and } s_2.
\end{array}
\end{array}$$
We notice that $\beta$ is not here formally constructible; but adding the (trivial) commutativity conditions $f\cdot r_1=h$, $f\cdot r_2=h$ and $i\cdot k=h$ to $\B$ (where $h\colon R\to Z$ and $k\colon R\to I$ are the omitted arrows coming respectively from the conditions `$(R,r_1,r_2)$ represents the kernel pair of $f$' and `$(I,p)$ represents the coequalizer of $r_1$ and $r_2$') will turn it into a constructible one.
\end{example}

\begin{example}
Being a linear category (see e.g.~\cite{lawshan}) is also equivalent to the property of admitting (functorial) verifications of some exactness sequents (of type $\varnothing$) $\alpha\vdash\beta$ with $\alpha$ being unconditional of finite kind and $\beta$ being constructible. For this, we first consider the property of having a zero object, described via the following exactness sequent.
$$\begin{array}{c|ccl}
\AA && \B & \\ \varnothing & 0 && \text{where } 0 \text{ represents the terminal object, and} \\ &&& 0 \text{ represents the initial object.} 
\end{array}$$
Actually, we just need this property to ensure the category is not empty. In view of Definition~1.10.1 in~\cite{BB}, it then remains to consider the exactness sequent displayed below.
$$\begin{array}{c|c}
\AA & \B \\ &  \\ \\ \cd{X \\ \\ Y} &
\cd{&&&& X \ar@<-5pt>@/_/[lllld]_(.7){w_X} \ar@<-2pt>[dd]_(.3){z}|!{[dll];[drr]}\hole \ar[lld]_(.6){i_X} \ar@<1pt>[rrd]_-{l_X} && \\ 0 \ar@<1pt>@/^/[rrrru]_(.3){a_X} \ar@<-1pt>@/_/[rrrrd]^(.3){a_Y} && C \ar[rrrr]^(.3){f} &&&& P \ar@<-5pt>[llu]_-{p_X} \ar@<5pt>[lld]^-{p_Y} \\ &&&& Y \ar@<5pt>@/^/[llllu]^(.7){w_Y} \ar@<-2pt>[uu]_(.7){z'}|!{[ull];[urr]}\hole  \ar[llu]^(.6){i_Y} \ar@<-1pt>[rru]^-{r_Y} &&} \\ \\
&  \\ \\
& \begin{array}{l}
\text{conditions from }\A\text{ together with:}\\
0\text{ represents the terminal object},\\
0\text{ represents the initial object},\\
z = a_Y \cdot w_X,\\
z' = a_X \cdot w_Y,\\
(P,p_X,p_Y)\text{ represents the product of }X\text{ and }Y,\\
p_X \cdot l_X = 1_X,\\
p_Y \cdot l_X = z,\\
p_X \cdot r_Y = z',\\
p_Y \cdot r_Y = 1_Y,\\
(C,i_X,i_Y)\text{ represents the coproduct of }X\text{ and }Y,\\
f \cdot i_X = l_X,\\
f \cdot i_Y = r_Y,\\
f\colon C\to P\text{ represents an isomorphism}.
\end{array}
\end{array}$$
\end{example}

Other properties of a category can also be expressed as the property of admitting (functorial) verifications of some exactness sequents (of type $\varnothing$) $\alpha\vdash\beta$ with $\alpha$ being unconditional of finite kind and $\beta$ being constructible. We give here a (non-exhaustive) list of such properties:
\begin{itemize}
\item having limits of shape $\AA$, for a finite category $\AA$,
\item having colimits of shape $\AA$, for a finite category $\AA$,
\item being a groupoid,
\item being a preorder,
\item having a zero object,
\item being a regular category with $(M,X)$-closed relations~\cite{janelidze5}, for an extended matrix $(M,X)$ of terms in the algebraic theory of sets; this includes the examples of
\begin{itemize}
\item being an $n$-permutable category (for a fixed $n \geqslant 2$)~\cite{CKP}, and so by taking $n=2$, being a regular Mal'tsev category~\cite{CLP},
\item being a regular majority category~\cite{hoefnagel},
\end{itemize}
\item being a regular pointed category with $(M,X)$-closed relations~\cite{janelidze5}, for an extended matrix $(M,X)$ of terms in the algebraic theory of pointed sets; this includes the examples of
\begin{itemize}
\item being regular unital~\cite{bourn2},
\item being regular strongly unital~\cite{bourn2},
\item being regular subtractive~\cite{janelidze},
\end{itemize}
\item being a Barr-exact Mal'tsev category~\cite{barr,CLP},
\item being regular protomodular with binary coproducts~\cite{bourn},
\item being regular and having involution-rigidness property with binary coproducts~\cite{janmar},
\item being weakly Mal'tsev with binary coproducts~\cite{MF},
\item being semi-abelian~\cite{JMT},
\item being abelian~\cite{maclane book,buchsbaum,Gro57,maclane},
\item being additive, see e.g.~\cite{maclane book},
\item being normal~\cite{janelidzenormal},
\item being semi-abelian with the `Smith is Huq' condition~\cite{BG},
\item being semi-abelian with the `normality of Higgins commutators' condition~\cite{cigolithesis,CGVdL},
\item being an algebraically coherent semi-abelian category~\cite{CGVdL2},
\item being coherent with finite coproducts~\cite{johnstone,MR},
\item being distributive~\cite{CLW},
\item being extensive with pullbacks~\cite{CLW}.
\end{itemize}
In view of Subsection~\ref{section dual sequents}, the dual properties of all these could be added to the list. The case of being a regular category with $(M,X)$-closed relations is treated as Example~3.16 in~\cite{jacqminthesis}. For the property of being a Barr-exact Mal'tsev category, we use the fact that this is equivalent to being a regular Mal'tsev category with the additional property that for each reflexive graph
$$\cd{X \ar@<5pt>[r]^-{f} \ar@<-2pt>[r]_-{g} & Y \ar@/^1.2pc/@<1pt>[l]^-{s}}$$ the factorisation $p$ of $f$ and $g$ through the kernel pair of their coequalizer must be a regular epimorphism (i.e., the coequalizer of its kernel pair).
$$\cd{R_q \ar@{}[r]|(.14){}="A" \ar@<8pt>@/^/"A";[rd] \ar@<20pt>@{}[rd]|(.4){r_1} \ar@<3pt>@/^/[rd] \ar@<3pt>@{}[rd]|(.4){r_2} && \\ X \ar[u]^-{p} \ar@<5pt>[r]^-{f} \ar@<-2pt>[r]_-{g} & Y \ar@/^1.2pc/@<1pt>[l]^-{s} \ar@{->>}[r]^-{q} & Q}$$
The property involving regular protomodularity is explicitly described just before Proposition~4.27 in~\cite{jacqminthesis}. As remarked there, the assumption of the existence of binary coproducts can be replaced by the assumption of the existence of pushouts of morphisms along split monomorphisms. Similarly, we can get the case of involution-rigidness with binary coproducts by Theorem~3.2 in~\cite{janmar} and the case of weakly Mal'tsev categories with binary coproducts from its definition.
For semi-abelian categories, it suffices to present semi-abelianness as having a zero object, being a Barr-exact Mal'tsev category and being regular protomodular with binary coproducts.
An easy way to describe the abelian case is now to say that a category is abelian if and only if it is semi-abelian and its dual is also semi-abelian~\cite{JMT} (see Subsection~\ref{section dual sequents}).
The additive case is clear from Theorem~1.10.14 in~\cite{BB}. The case of normal categories follows directly from the definition. One could have also included the property of being a finitely complete pointed category where every split epimorphism is normal, a property which is equivalent to normality in the regular pointed context according to Theorem~4.0.3 in~\cite{BJ2}. The `Smith is Huq' condition follows from Theorem~4.6 in~\cite{HVdL} and the `normality of Higgins commutators' condition is evident from its definition. For the example of algebraically coherent semi-abelian categories, one could use Proposition~3.13 in~\cite{CGVdL2}. The example of being a coherent category with finite coproducts is easy to show from the definition (see~\cite{johnstone} and references therein) once one has remarked that in a regular category with finite coproducts, the union of two subobjects $s \colon S \rightarrowtail A$ and $t \colon T \rightarrowtail A$ is given by the image of the factorisation $\left( \begin{smallmatrix} s \\ t \end{smallmatrix} \right) \colon S+T \rightarrow A$.
The distributive case being obvious, let us finally discuss the example of extensive categories with pullbacks. By Proposition~2.2 in~\cite{CLW}, a category with pullbacks and binary coproducts is extensive if and only if for any commutative diagram
$$\cd{A_1 \ar[d]_-{f_1} \ar[r]^-{a_1} & A \ar[d]_-{f} & A_2 \ar[l]_-{a_2} \ar[d]^-{f_2} \\ X_1 \ar[r]_-{x_1} & X_1+X_2 & X_2 \ar[l]^-{x_2}}$$
where the bottom row is a coproduct diagram, the two squares are simultaneously pullbacks exactly when the top row is a coproduct diagram.
In one direction, this means that starting from $f_i \colon A_i \rightarrow X_i$ (for $i \in \{1,2\}$), we require the square
$$\cd{A_i \ar[d]_-{f_i} \ar[r]^-{a_i} & A_1+A_2 \ar[d]^-{f_1+f_2} \\ X_i \ar[r]_-{x_i} & X_1+X_2}$$
to be a pullback for each $i \in \{1,2\}$. The converse implication can be expressed as: given three morphisms with the same codomain,
$$\cd{&A \ar[d]^-{f} & \\ X_1 \ar[r]_-{y} & X & X_2 \ar[l]^-{z}}$$
considering the pullback of $f$ along the induced morphism $\left( \begin{smallmatrix} y \\ z \end{smallmatrix} \right)\colon X_1+X_2 \to X$,
$$\cd{P \ar[r]^-{w} \ar[d]_-{f'} \ar@{}[rd]|-{\textrm{pb}} & A \ar[d]^-{f} \\ X_1+X_2 \ar[r]_-{\left( \begin{smallmatrix} y \\ z \end{smallmatrix} \right)} & X}$$
then the pullbacks of the coproduct injections along $f'$ give rise to a coproduct diagram.
$$\cd{P_1 \ar[d]_-{f_1} \ar[r]^-{x'_1} \ar@{}[rd]|-{\textrm{pb}} & P \ar[d]_-{f'} & P_2 \ar[l]_-{x'_2} \ar[d]^-{f_2} \ar@{}[ld]|-{\textrm{pb}} \\ X_1 \ar[r]_-{x_1} & X_1+X_2 & X_2 \ar[l]^-{x_2}}$$

The following is a list of further examples of properties of admitting (functorial) verifications of some exactness sequents $\alpha\vdash\beta$ for which either $\alpha$ is not unconditional of finite kind or $\beta$ is not constructible:
\begin{itemize}
\item The axiom of choice on a category $\CC$, stating that every epimorphism is a split epimorphism, can be expressed as a property of the form $\alpha\vdash_\CC\beta$ where neither $\alpha$ is unconditional of finite kind, nor $\beta$ is constructible.

\item For every endomorphism in a category $\CC$ to be idempotent is an exactness property of the form $\alpha\vdash_\CC\beta$ where $\alpha$ is not unconditional of finite kind, while $\beta$ is constructible.
    
\item For a commutative algebraic theory $\mathcal{T}$, a $\mathcal{T}$-enrichment of a category $\CC$ with finite products is a functorial verification of an $\varnothing$-sequent $\alpha\vdash\beta$ for $\CC$ (see e.g.~\cite{KF,janelidze1}), where $\alpha$ is unconditional of finite kind, but $\beta$, in general, is not constructible. These sequents may admit several functorial verifications for the same $\CC$, although usually they are unique.
    
\item Consider a morphism $t\colon 1\to\Omega$ in a category $\CC$, where $1$ is a terminal object $\CC$. The property for $\Omega$ to be a subobject classifier with $t$ as the truth morphism (see e.g.~\cite{johnstone}) can be expressed as the existence of verifications of two exactness sequents for the same structure.
\end{itemize}

\section{The stability theorem}

If $\CC$ is a small finitely complete category,
we denote by $\LC$ (or by $\CCC$ interchangeably, following~\cite{barr2}) the dual of the category of finite limit preserving functors from $\CC$ to~$\Set$. We will consider the (restricted) Yoneda embedding $$\y \colon \CC \hookrightarrow \LC,\quad C\mapsto \hom_\CC(C,-)$$ which fully embeds $\CC$ in $\LC$. As shown in~\cite{AGV,GU}, this embedding is the free cofiltered limit completion of $\CC$. Furthermore, we have: 

\begin{theorem} \cite{AGV,GU} \label{c tilde lemma}
For any finitely complete small category $\CC$, we have:
\begin{enumerate}
\item \label{bicompleteness}  $\CCC$ is complete and cocomplete.
\item  The embedding $\y \colon \CC \hookrightarrow \CCC$ is fully faithful and thus reflects isomorphisms.
\item \label{i preserves} The embedding $\y \colon \CC \hookrightarrow \CCC$ preserves (and reflects) finite limits and all colimits.
\setcounter{c tilde enumeration}{\value{enumi}}
\end{enumerate}
\end{theorem}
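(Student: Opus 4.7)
The plan is to prove the three claims by transferring well-known properties of $\Lex(\CC,\Set)$ across the duality $\CCC = \Lex(\CC,\Set)^{\op}$. The two main ingredients to reach for are: (a) $\Lex(\CC,\Set)$ is closed under limits in $\Set^{\CC}$ (which are computed pointwise), and (b) $\Lex(\CC,\Set)$ is a reflective subcategory of $\Set^{\CC}$ via the standard ``lex hull'' reflector.

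For claim (1), bicompleteness of $\CCC$ follows from bicompleteness of $\Lex(\CC,\Set)$, since taking opposites is self-dual. Completeness is immediate from (a): a limit of finite-limit-preserving functors is again finite-limit-preserving, because limits commute with limits. Cocompleteness follows from (b): colimits in $\Lex(\CC,\Set)$ are obtained by applying the reflector to the corresponding colimits in $\Set^{\CC}$, and left adjoints preserve colimits.

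For claim (2), full faithfulness is a direct Yoneda computation inside $\Lex(\CC,\Set)$: unpacking definitions, \[ \hom_{\CCC}(\y C, \y C') = \hom_{\Lex(\CC,\Set)}(\hom_\CC(C',-), \hom_\CC(C,-)) = \hom_\CC(C,C'), \] with reflection of isomorphisms the usual consequence. For claim (3), the decisive computation is that, for a finite limit $L = \limit_j D(j)$ in $\CC$, the representable $\hom_\CC(L,-)$ is the \emph{colimit} of $j \mapsto \hom_\CC(D(j),-)$ in $\Lex(\CC,\Set)$ --- which under the duality is exactly $\y L = \limit_j \y D(j)$ in $\CCC$. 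To verify, I would test against an arbitrary $F \in \Lex(\CC,\Set)$: the hom into $F$ from $\hom_\CC(L,-)$ is $F(L)$ by Yoneda, and the hom into $F$ from $\colimit_j \hom_\CC(D(j),-)$ is $\limit_j F(D(j)) = F(L)$, using here that $F$ preserves finite limits. Preservation of arbitrary colimits is cleaner: if $C = \colimit_j D(j)$ in $\CC$, then since limits in $\Lex(\CC,\Set)$ are pointwise, the identity $\limit_j \hom_\CC(D(j),-) = \hom_\CC(C,-)$ reduces at each argument to the universal property of $C$. Reflection of both finite limits and all colimits then follows from full faithfulness together with preservation by the standard argument: any cone becoming a (co)limit in $\CCC$ has its comparison map to the actual $\CC$-(co)limit sent to an isomorphism there, hence already in $\CC$.

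The main obstacle would be the careful bookkeeping of variance in (3): one must track which sides become limits and which colimits after dualization, and in particular it is crucial that we work in $\Lex(\CC,\Set)$ rather than in $\Set^{\CC}$. It is exactly this restriction --- powered by the fact that lex functors preserve finite limits by definition --- that enables finite limits in $\CC$ to be sent to limits (not merely colimits) in $\CCC$, distinguishing the pro-completion from the free cocompletion of $\CC$.
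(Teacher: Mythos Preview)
The paper does not prove this theorem: it is stated with a citation to \cite{AGV,GU} and used as a black box, so there is no ``paper's own proof'' to compare against. Your sketch is a correct outline of the classical argument, with the right ingredients (closure of $\Lex(\CC,\Set)$ under limits in $\Set^{\CC}$, reflectivity for cocompleteness, Yoneda for full faithfulness, and the test-against-$F$ computation for preservation of finite limits).

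One small imprecision: your argument for \emph{reflection} of (co)limits invokes a comparison map to ``the actual $\CC$-(co)limit'', which presupposes that the (co)limit exists in $\CC$. For finite limits this is fine since $\CC$ is finitely complete, but $\CC$ is not assumed to have colimits. The cleaner statement is that a fully faithful functor reflects all (co)limits outright, with no preservation or existence hypothesis needed: if a cocone $(c_i\colon D_i\to C)$ in $\CC$ has image a colimit cocone in $\CCC$, then for any competing cocone $(d_i\colon D_i\to X)$ in $\CC$ one applies $\y$, obtains a unique mediating morphism $\y C\to \y X$, and lifts it back through full faithfulness. This is what you want here.
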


Moreover, viewing the different hom-sets of $\CC$ as pairwise disjoint sets, we will assume that $\y$ is also injective on objects. More technical properties of $\y$ will be recalled in Theorem~\ref{c tilde lemma 2}. By Theorem~\ref{c tilde lemma}(\ref{i preserves}), for a small category $\CC$ with finite limits, if $S$ is a $\Z$-structure in $\CC$ then $\y S=\y\circ S$ is a $\Z$-structure in $\CCC$, for any sketch $\Z$. This defines a functor
$$\y_\Z\colon\Z\CC\to\Z\CCC.$$

We are now ready to formulate our `stability theorem':

\begin{theorem}\label{C tilde preserves unconditional exactness properties} Consider an exactness sequent $\cd{\X \ar[r]^-{\alpha} & \A \ar[r]^-{\beta} & \B}$ such that $\alpha$ is unconditional of finite kind,
and let $\CC$ be a small finitely complete category. If there is a functorial verification of $\alpha\vdash\beta$ for an $\X$-structure $F$ in $\CC$, then there is also a functorial verification of $\alpha\vdash\beta$ for $\y F$ in~$\LC$. In other words, $$\alpha\vdash_F\beta \text{ functorially $\Rightarrow$ }\alpha\vdash_{\y F}\beta \text{ functorially}$$
for any $\X$-structure $F$ in $\CC$.
\end{theorem}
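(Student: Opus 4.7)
The plan is to construct a functorial verification $\widetilde{E}\colon \A^\alpha_{\y F}\LC \to \B^{\beta\alpha}_{\y F}\LC$ from the given functorial verification $E\colon \A^\alpha_{F}\CC \to \B^{\beta\alpha}_{F}\CC$, by representing each $\A$-structure $G$ over $\y F$ in $\LC$ as a cofiltered limit of $\y$-images of $\A$-structures in $\CC$ over $F$, applying $E$ termwise, and then taking the cofiltered limit in $\B\LC$.

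First I would exploit the hypothesis that $\alpha$ is unconditional of finite kind: the sketch $\A$ is then the underlying sketch of a finite category $\AA$ equipped with only those conditions already imposed by $\X$. Consequently, an $\A$-structure in any category $\D$ over a given $\X$-structure $F'$ is the same as a functor $\AA\to\D$ whose restriction along $\alpha$ is $F'$. Given $G\in\A^\alpha_{\y F}\LC$, Theorem~\ref{c tilde lemma} tells us each vertex $G(A)$ is a cofiltered limit of representables; the uniformity lemma (generalising the set-based lemma in~\cite{DS} and Lemma~5.1 in~\cite{makkai}) should let me coalesce these pointwise presentations into a single cofiltered diagram $G_\bullet\colon I\to\A^\alpha_F\CC$ with $G=\limit_{i\in I^{\op}}\y_\A G_i$ in $\A\LC$, each $G_i$ being chosen so that $G_i\circ\alpha=F$ (using that $\A$ has only finitely many objects and arrows outside the image of $\alpha$, and that $\y$ is fully faithful and injective on objects).

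I then set $\widetilde{E}(G)=\limit_{i\in I^{\op}}\y_\B E(G_i)$ in $\B\LC$. Verifying that this limit really is a $\B$-structure reduces to the preservation, by cofiltered limits in $\LC$, of each condition appearing in $\B$: commutativity conditions and finite limit conditions are automatic, while finite colimit conditions are handled by the key fact that in $\LC=\Lex(\CC,\Set)^{\op}$, cofiltered limits commute with finite colimits, which in turn follows from filtered colimits commuting with finite limits in the locally finitely presentable category $\Lex(\CC,\Set)$. The equality $\widetilde{E}(G)\circ\beta=G$ then holds because $\beta_\LC$ is computed pointwise and hence commutes with cofiltered limits, while $\beta_\CC\circ E$ is the identity on $\A^\alpha_F\CC$.

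I expect the main obstacle to be the strict functoriality of $G\mapsto\widetilde{E}(G)$. For a morphism $m\colon G\to G'$ with chosen pro-presentations $G_\bullet$ and $G'_\bullet$, the morphism form of the uniformity lemma should yield, after passing to a cofinal subdiagram, a levelwise lift of $m$ to a transformation of pro-diagrams in $\A^\alpha_F\CC$, which can be pushed through $E$ and then through the cofiltered limit. The delicate point is that such a lift is canonical only up to equivalence of pro-morphisms; to avoid grinding through a combinatorial verification of functoriality, I would invoke Lemma~\ref{LemA} and merely exhibit a functorial assignment $G\mapsto H_G$ of $\B$-structures together with a natural isomorphism $H_G\circ\beta\cong G$, which is a strictly weaker requirement and is better aligned with both the pro-completion machinery and the $2$-categorical calculus of natural transformations highlighted in the introduction.
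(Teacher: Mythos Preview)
Your outline is correct and follows essentially the same route as the paper: reduce via Lemma~\ref{LemA} to a functorial assignment $G\mapsto H_G$ with a natural isomorphism $H_G\circ\beta\cong G$; present $G$ as a cofiltered limit of $\y$-images of $\A$-structures over $F$; push through the given section $E$; take the cofiltered limit in $\B\CCC$, using that cofiltered limits in $\CCC$ commute with finite colimits to see that the result is a $\B$-structure.

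One point deserves emphasis. The ``uniformity lemma'' you invoke, producing a cofiltered diagram landing in $\A^\alpha_F\CC$ (so that each approximant $G_i$ genuinely satisfies $G_i\circ\alpha=F$, not merely $G_i|_{\X_\ast}\cong F_\ast$), is \emph{not} the off-the-shelf statement from \cite{DS} or \cite{makkai}; it is the generalized form that the paper spends most of its effort establishing. The paper does this by taking the comma category $(G\downarrow\y_\A^{\alpha,F})$ as the canonical indexing category, and the bulk of the proof is the construction of an adjunction $L\dashv R$ between $(G\downarrow\y_\A^{\alpha,F})$ and $(G_\ast\downarrow\y_{\A_\ast})$ with invertible unit, which simultaneously shows that the former is cofiltered and that the canonical cone with vertex $G$ is limiting. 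This is where the hypothesis that $\alpha$ is unconditional of finite kind is actually used (to get the right-adjoint-right-inverse of $\gamma_\CC$ and to recognise certain squares as pullbacks). Your parenthetical ``using that $\A$ has only finitely many objects and arrows outside the image of $\alpha$, and that $\y$ is fully faithful and injective on objects'' gestures at this but underestimates the work involved. So: the strategy is right, but what you flag as the main obstacle (functoriality in $G$) is in fact handled rather cleanly once the canonical comma-category presentation is in place, whereas the step you treat as a black box is the real labour.
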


Let us make explicit that in the case where $\X=\varnothing$ is the empty sketch, this theorem shows that the pro-completion $\LC$ inherits many exactness properties from the category~$\CC$. In other words, using the notation for the case $\X=\varnothing$ from Subsection~\ref{secB}, we immediately get the following corollary.

\begin{corollary}\label{corollary stability theorem} Consider an $\varnothing$-sequent $\cd{\varnothing \ar[r]^-{\alpha} & \A \ar[r]^-{\beta} & \B}$ such that $\alpha$ is unconditional of finite kind,
and let $\CC$ be a small finitely complete category. If there is a functorial verification of $\alpha\vdash\beta$ for~$\CC$, then there is also one for~$\LC$. In other words, $$\alpha\vdash_{\CC}\beta \text{ functorially $\Rightarrow$ }\alpha\vdash_{\LC}\beta \text{ functorially.}$$
\end{corollary}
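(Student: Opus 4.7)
The plan is to exploit the fact that $\y\colon\CC\hookrightarrow\CCC$ is the free cofiltered limit completion of $\CC$. Any $\A$-structure $G\in\A^{\alpha}_{\y F}\CCC$ should be expressible as a cofiltered limit, in $\A\CCC$, of $\A$-structures of the form $\y_\A(G_i)$ with $G_i\in\A^{\alpha}_F\CC$. Applying the given functorial verification $E$ of $\alpha\vdash\beta$ for $F$ to each such $G_i$ and then forming a cofiltered limit back in $\B\CCC$ should yield the required extension of $G$ along $\beta$.

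The technical heart is to make this precise and functorial. I would first prove a \emph{functorial uniformity lemma} (a generalisation of the set-based uniformity lemma of Diers--Street and Makkai cited in the introduction): a 2-natural assignment sending every object $G\in\A^{\alpha}_{\y F}\CCC$ to a cofiltered diagram $D_G\colon I_G\to\A^{\alpha}_F\CC$ together with a limiting cone $(\y_\A(D_G(i)))_i\Rightarrow G$ in $\A\CCC$, and every morphism of the fibre to a coherent morphism of such systems. Here the hypothesis that $\alpha$ is unconditional of finite kind becomes essential: outside the image of $\alpha$, the sketch $\A$ carries only the commutativity conditions of a finite category, so a fibre object over $\y F$ amounts to a finite amount of combinatorial data on top of $\y F$, and such data can be cofilteredly approximated by data in $\CC$. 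The verification that the commutativity and convergence conditions of $\X$ genuinely lift to the approximating structures $G_i\in\A^{\alpha}_F\CC$ rests on finite limits commuting with cofiltered limits in $\Set$, on the pointwise computation of limits in categories of structures, and on $\y$ preserving finite limits by Theorem~\ref{c tilde lemma}(\ref{i preserves}).

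Granted the functorial uniformity lemma, one defines
\[\tilde E(G) \;=\; \limit_{i\in I_G}\;\y_\B\bigl(E(D_G(i))\bigr)\]
in $\B\CCC$, which exists by Theorem~\ref{c tilde lemma}(\ref{bicompleteness}). Since $\beta$ is a subsketch inclusion and limits in $\A\CCC$ and $\B\CCC$ are pointwise, the composition functor $\beta_\CCC$ preserves cofiltered limits; combined with $\beta^{\alpha}_F\circ E=\mathrm{id}$ and the limiting cone for $G$, this yields $\tilde E(G)\circ\beta = G$, while $\tilde E(G)\circ\beta\alpha=\y F$ follows from each $D_G(i)\circ\alpha=F$ and the fact that $\y F$ is the limit of its own constant cofiltered diagram. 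Functoriality of $\tilde E$ is inherited from the 2-naturality of the chosen presentation. The main obstacle is clearly the functorial uniformity lemma: producing a single cofiltered presentation of one $G$ follows from the description of $\LC$ as the free cofiltered limit completion of $\CC$, but producing presentations \emph{simultaneously and coherently} for all $G$ and all morphisms between them, while enforcing the constraint $G_i\circ\alpha=F$ on the nose, requires the careful 2-categorical bookkeeping announced in the introduction.
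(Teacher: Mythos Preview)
The paper's proof of this corollary is a single sentence: specialise Theorem~\ref{C tilde preserves unconditional exactness properties} to $\X=\varnothing$, taking $F$ to be the unique $\varnothing$-structure in~$\CC$. You do not invoke the theorem; instead you sketch a direct argument which amounts to reproving Theorem~\ref{C tilde preserves unconditional exactness properties} (in the case $\X=\varnothing$) from scratch.

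That said, your outline coincides with the paper's proof of the theorem itself: present each $G\in\A^{\alpha}_{\y F}\CCC$ as a cofiltered limit of $\y_\A$-images of objects of $\A^{\alpha}_F\CC$ (your ``functorial uniformity lemma'' is exactly what the paper establishes in Steps~\ref{first step R}--\ref{step limiting cone}, via the adjunction $L\dashv R$ and the comma category $(G\downarrow\y_\A^{\alpha,F})$), push this presentation through the given right inverse $\beta'_F$, and take the cofiltered limit in $\B\CCC$ (the paper's Step~\ref{step definition HG}). Two places where your sketch is looser than the paper: first, you assert $\tilde E(G)\circ\beta=G$ on the nose, but a chosen cofiltered limit in $\B\CCC$ need not restrict along $\beta$ to the \emph{given} object~$G$, only to something canonically isomorphic to it; the paper obtains only a natural isomorphism $H_G\circ\beta\cong G$ (Steps~\ref{step beta p limiting cone}--\ref{step the isomorphism iG}) and then invokes Lemma~\ref{LemA} to rectify this into a genuine right inverse. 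Second, ``functoriality of $\tilde E$ is inherited from the 2-naturality of the chosen presentation'' hides real work, since the indexing categories $I_G$ vary with~$G$; the paper makes this precise via the functors $m^\#$ between comma categories and the induced maps $H_m$ in Steps~\ref{step the functor m square}--\ref{last step}.
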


For a small finitely cocomplete category $\CC$, we denote by $\y'\colon\CC\hookrightarrow\Lex(\CC^{\op},\Set)$ the (restricted) Yoneda embedding $C\mapsto\hom_\CC(-,C)$. By applying Theorem~\ref{C tilde preserves unconditional exactness properties} with the exactness sequent $\cd{\X^{\op} \ar[r]^-{\alpha^{\op}} & \A^{\op} \ar[r]^-{\beta^{\op}} & \B^{\op}}$, the category $\CC^{\op}$ and the $\X^{\op}$-structure $F^{\op}$ in~$\CC^{\op}$, we get the following dual formulation of the stability theorem:

\begin{theorem}\label{dual stability theorem} Consider an exactness sequent $\cd{\X \ar[r]^-{\alpha} & \A \ar[r]^-{\beta} & \B}$ such that
$\alpha$ is unconditional of finite kind, and let $\CC$ be a small finitely cocomplete category. If there is a functorial verification of $\alpha\vdash\beta$ for an $\X$-structure $F$ in $\CC$, then there is also a functorial verification of $\alpha\vdash\beta$ for $\y' F$ in~$\Lex(\CC^{\op},\Set)$. In other words, $$\alpha\vdash_F\beta \text{ functorially $\Rightarrow$ }\alpha\vdash_{\y' F}\beta \text{ functorially}$$
for any $\X$-structure $F$ in $\CC$.
\end{theorem}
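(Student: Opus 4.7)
The plan is to apply Theorem~\ref{C tilde preserves unconditional exactness properties} to the opposite data and then dualise back. First I would observe that since $\CC$ is small and finitely cocomplete, the opposite category $\CC^{\op}$ is small and finitely complete, so Theorem~\ref{C tilde preserves unconditional exactness properties} is applicable there. Passing to duals, I form the exactness sequent $\alpha^{\op}\vdash\beta^{\op}$ as in Subsection~\ref{section dual sequents}, and note that $\alpha^{\op}$ is unconditional of finite kind, since (as recorded in the same subsection) this property is self-dual. The $\X$-structure $F$ gives rise to an $\X^{\op}$-structure $F^{\op}$ in $\CC^{\op}$.

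Next, I would invoke the equivalence $\alpha\vdash_F\beta$ functorially $\iff$ $\alpha^{\op}\vdash_{F^{\op}}\beta^{\op}$ functorially, also established in Subsection~\ref{section dual sequents} via the isomorphism $\Z^{\op}(\CC^{\op})\cong(\Z\CC)^{\op}$. This transports our hypothesis into a functorial verification of $\alpha^{\op}\vdash\beta^{\op}$ for $F^{\op}$ in $\CC^{\op}$. A single application of Theorem~\ref{C tilde preserves unconditional exactness properties} then produces a functorial verification of $\alpha^{\op}\vdash\beta^{\op}$ for $\y(F^{\op})$ in $\Lex(\CC^{\op},\Set)^{\op}$, where $\y$ denotes here the restricted Yoneda embedding of the finitely complete category $\CC^{\op}$.

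The key identification to make is $\y(F^{\op})=(\y'F)^{\op}$: the Yoneda embedding of $\CC^{\op}$ sends each object $C$ to $\hom_{\CC^{\op}}(C,-)=\hom_{\CC}(-,C)$, which is precisely $\y'(C)$, and its codomain $\Lex(\CC^{\op},\Set)^{\op}$ is the opposite of the codomain $\Lex(\CC^{\op},\Set)$ of $\y'$; so the two embeddings correspond under taking opposites. With this identification, a second application of the equivalence from Subsection~\ref{section dual sequents} converts the functorial verification of $\alpha^{\op}\vdash\beta^{\op}$ for $(\y'F)^{\op}$ in $\Lex(\CC^{\op},\Set)^{\op}$ back into the desired functorial verification of $\alpha\vdash\beta$ for $\y'F$ in $\Lex(\CC^{\op},\Set)$.

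I do not anticipate any genuine obstacle: the substantive content lies entirely in Theorem~\ref{C tilde preserves unconditional exactness properties}, and the argument here is a bookkeeping exercise in opposites. The only point requiring care is the identification of $\y$ applied to $\CC^{\op}$ with the opposite of $\y'$ applied to $\CC$, which is immediate once the definitions of the two embeddings are unfolded side by side.
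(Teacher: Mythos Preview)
Your proposal is correct and follows exactly the approach indicated in the paper: apply Theorem~\ref{C tilde preserves unconditional exactness properties} to the dual sequent $\alpha^{\op}\vdash\beta^{\op}$, the category $\CC^{\op}$ and the $\X^{\op}$-structure $F^{\op}$, then dualise back using the equivalences of Subsection~\ref{section dual sequents} and the identification $\y(F^{\op})=(\y'F)^{\op}$. The paper records this in a single sentence before the statement; your write-up simply makes the bookkeeping explicit.
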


Subsection~\ref{section examples} provides many examples of exactness properties to which Theorems~\ref{C tilde preserves unconditional exactness properties} and~\ref{dual stability theorem} can be applied. As mentioned in the Introduction, several particular instances of these theorems can already be found in the literature for particular exactness properties of a category, see \cite{barr2,BP,CPR,DS,DS2,GP}. The property of being cartesian closed has also been proved~\cite{DS2} to transfer from a small finitely cocomplete category $\CC$ to $\Lex(\CC^{\op},\Set)$, but we have not been able to deduce this fact from Theorem~\ref{dual stability theorem}, which perhaps suggests that our theorem could be further generalized. The property for a morphism $t\colon1\to\Omega$ to be the truth morphism representing $\Omega$ as a subobject classifier was claimed in~\cite{DS2} to be transferred from a regular finitely cocomplete small category $\CC$ to $\Lex(\CC^{\op},\Set)$. However, in~\cite{BP}, it has been shown that this stability result is false. This is not surprising from the point of view of our stability theorem. The above property for $t$ can be expressed as the existence of verifications of exactness sequents $\alpha\vdash\beta$ with $\alpha$ being unconditional of finite kind. However, in this case, this is not equivalent to the existence of functorial verifications, since the $\beta$'s are not all constructible.

As we can see from Subsection~\ref{section examples}, the pro-completion of any regular subtractive small category is again a regular subtractive category by Corollary~\ref{corollary stability theorem}. Under the axiom of universes, this resolves positively the open question from~\cite{boujan abelianization} whether any regular subtractive category admits an embedding to one with binary coproducts.

Let $\CC$ be the full subcategory of $\Set$ consisting of subsets of $\NN$, the set of natural numbers. This category is small, finitely cocomplete and Barr-exact. However, according to~\cite{BP,CPR}, since $\CC$ is not `pro-exact', the category $\Lex(\CC^{\op},\Set)$ is not Barr-exact. In view of Theorem~\ref{dual stability theorem}, this shows that being a Barr-exact category cannot be presented as the property of admitting functorial verifications of some exactness sequents $\alpha\vdash\beta$ with every $\alpha$ being unconditional of finite kind. However, it is not difficult to see that it can be written as the property of admitting functorial verifications of some exactness sequents $\alpha\vdash\beta$ with constructible $\beta$'s.

We conclude this section with a remark about the converse of the stability theorem. Because of the properties of the functor $\y$, we have the implication  
$$\alpha\vdash_{\y F}\beta \text{ functorially $\Rightarrow$ }\alpha\vdash_F\beta \text{ functorially}$$
at least in the following two cases:
\begin{itemize}
\item when $\alpha$ is any subsketch inclusion, $\beta$ is constructible and $\CC$ is finitely cocomplete (in addition to being small finitely complete),
\item when $\alpha$ is any subsketch inclusion, $\beta$ is constructible and $\B$ does not contain any colimit condition (and $\CC$ is small finitely complete).
\end{itemize}
Indeed, in those cases, if $G$ is an $\A$-structure in $\CC$ such that $G\circ\alpha = F$, we know that $\y G$ is an $\A$-structure in $\Lex(\CC^{\op},\Set)$ such that $(\y G)\circ \alpha = \y F$. Suppose that $\alpha\vdash_{\y F}\beta$ holds (functorially). Then there is a $\B$-structure $H'$ in $\Lex(\CC^{\op},\Set)$ such that $H' \circ \beta = \y G$. Now, in view of the following, we can show by induction that $H'$ lies, up to isomorphism, in the image of~$\y_{\B}$: 
\begin{itemize}
\item the step-by-step construction of $\beta$ (Subsection~\ref{section constructible sequents}), 

\item in~$\CC$, we can construct composites of paths, finite limits, morphisms induced by them and, in the first case, finite colimits (trivial),

\item the functor $\y$ preserves and reflects commutativity, finite limits and finite colimits (Theorem~\ref{c tilde lemma}).
\end{itemize}
One then concludes by Lemma~\ref{lemma verification constructible}.

\section{Preliminaries for the proof of the stability theorem}\label{section preliminaries}

In this section we recall some well-known concepts and facts and also fix notation as a preparation for the proof of the stability theorem.

\subsection{2-categorical notation}

Functors between categories are represented by a single arrow as in $M\colon \AA\to\BB$. The composite of functors $M\colon \AA\to\BB$ and $N\colon \BB\to\CC$ is denoted by $N\circ M$. Natural transformations are represented by double arrows as in $\mathfrak{m}\colon M\Rightarrow M'$. The vertical composite of $\mathfrak{m}\colon M\Rightarrow M'$ and $\mathfrak{m'}\colon M'\Rightarrow M''$ is denoted by $\mathfrak{m'}\circ \mathfrak{m}$. The horizontal composite of $\mathfrak{m}$ and $\mathfrak{n}$ in a display
$$\cd{\AA \ar@/^1pc/[rr]^-{M} \ar@{}@<10pt>[rr]^-{}="M" \ar@/_1pc/[rr]_-{M'} \ar@{}@<-10pt>[rr]_-{}="Y" \ar@{=>}"M";"Y"^-{\,\mathfrak{m}} && \BB \ar@/^1pc/[rr]^-{N} \ar@{}@<10pt>[rr]^-{}="N" \ar@/_1pc/[rr]_-{N'} \ar@{}@<-10pt>[rr]_-{}="Z" \ar@{=>}"N";"Z"^-{\,\mathfrak{n}} && \CC}$$
is denoted by $\mathfrak{n}\bullet\mathfrak{m}\colon N\circ M\Rightarrow N'\circ M'$; recall that it is defined by $$(\mathfrak{n}\bullet\mathfrak{m})_A=\mathfrak{n}_{M'(A)} \circ N(\mathfrak{m}_A) = N'(\mathfrak{m}_A) \circ \mathfrak{n}_{M(A)}$$ for each object $A$ of $\AA$. As usual, for the sake of brevity, $1_N \bullet \mathfrak{m}$ is abbreviated as $N \bullet \mathfrak{m}$ and $\mathfrak{n} \bullet 1_M$ is abbreviated as $\mathfrak{n} \bullet M$.
The `middle interchange law' says that given a diagram
$$\cd{\AA \ar@<4pt>@/^1.4pc/[rr]^-{M} \ar@{}@<19pt>[rr]^-{}="M" \ar[rr]|-{M'} \ar@{}@<2pt>[rr]_-{}="Y" \ar@{=>}"M";"Y"^-{\,\mathfrak{m}} \ar@<-4pt>@/_1.4pc/[rr]_-{M''} \ar@{}@<-4pt>[rr]^-{}="F" \ar@{}@<-19pt>[rr]_-{}="A" \ar@{=>}"F";"A"^<<{\,\mathfrak{m'}} && \BB \ar@<4pt>@/^1.4pc/[rr]^-{N} \ar@{}@<19pt>[rr]^-{}="N" \ar[rr]|-{N'} \ar@{}@<2pt>[rr]_-{}="Z" \ar@{=>}"N";"Z"^-{\,\mathfrak{n}} \ar@<-4pt>@/_1.4pc/[rr]_-{N''} \ar@{}@<-4pt>[rr]^-{}="G" \ar@{}@<-19pt>[rr]_-{}="B" \ar@{=>}"G";"B"^<<{\,\mathfrak{n'}} && \CC}$$
of functors and natural transformations, the equality
$$(\mathfrak{n'} \circ \mathfrak{n}) \bullet (\mathfrak{m'} \circ \mathfrak{m}) = (\mathfrak{n'} \bullet \mathfrak{m'}) \circ (\mathfrak{n} \bullet \mathfrak{m})$$
holds. We can represent identities involving vertical and horizontal composition of natural transformations as `pasting identities' using the symbol `$\equiv$' between the corresponding diagrams. For example, the pasting representation of the previous identity is:
$$\vcenter{\cd{\AA \ar@/^1.3pc/[rr]^-{M} \ar@{}@<13pt>[rr]^-{}="M" \ar@/_1.3pc/[rr]_-{M''} \ar@{}@<-13pt>[rr]_-{}="Y" \ar@{=>}"M";"Y"^<<<<{\mathfrak{m'}\circ\mathfrak{m}} && \BB \ar@/^1.3pc/[rr]^-{N} \ar@{}@<13pt>[rr]^-{}="N" \ar@/_1.3pc/[rr]_-{N''} \ar@{}@<-13pt>[rr]_-{}="Z" \ar@{=>}"N";"Z"^<<<<{\mathfrak{n'}\circ\mathfrak{n}} && \CC}} \qquad \equiv \qquad \vcenter{\cd{\AA \ar@/^2.3pc/[rrrr]^-{N\circ M} \ar@{}@<25pt>[rrrr]^-{}="M" \ar[rrrr]|-{N'\circ M'} \ar@{}@<4pt>[rrrr]_-{}="Y" \ar@{=>}"M";"Y"^-{\,\mathfrak{n}\bullet\mathfrak{m}} \ar@/_2.3pc/[rrrr]_-{N''\circ M''} \ar@{}@<-4pt>[rrrr]^-{}="F" \ar@{}@<-25pt>[rrrr]_-{}="A" \ar@{=>}"F";"A"^-{\,\mathfrak{n'}\bullet\mathfrak{m'}} &&&& \CC}}$$

\subsection{The category $\1$}

We write $\1$ for the single-morphism category. For a category $\AA$, by $!_\AA$ we denote the unique functor $\AA\to\1$. Since this functor is uniquely determined by its domain and codomain, when both of these are displayed in a diagram, we do not include the label `$!_\AA$' for the corresponding arrow. 
Furthermore, we do not distinguish between an object $A\in\AA$ and the functor $\1\to\AA$ which maps the unique morphism of $\1$ to the identity morphism of $A$.

\subsection{Cones}

Given a functor $M\colon \AA\to\BB$ and an object $B\in\BB$, we view a cone over $M$, with vertex $B$, as a natural transformation
$$\cd{ & \1\ar[dr]^-{B} & \\ \AA \ar[rr]_-{M}\ar[ur]\urrtwocell<\omit>{<1>} &  & \BB.}$$ Given two cones $\mathfrak{b}_1 \colon B_1\circ !_\AA \Rightarrow M$ and $\mathfrak{b}_2\colon B_2\circ !_\AA \Rightarrow M$ over $M$, a morphism of cones $m\colon \mathfrak{b}_1\to \mathfrak{b}_2$ is a morphism $m\colon B_1\to B_2$ in $\BB$ such that for the corresponding natural transformation $m \colon B_1 \Rightarrow B_2$, the pasting identity
$$\vcenter{\cd{ & \1\ar[dr]|(.32){\hole}|(.35){B_2}|(.4){\hole} \ar@/^25pt/[dr]|-{B_1}|(.55){\hole}\drtwocell<\omit>{<-2>\,\,m} & \\ \AA \ar[rr]_-{M}\ar[ur]\urrtwocell<\omit>{<1>\;\;\mathfrak{b_2}} &  & \BB }}\quad\quad\equiv\quad\vcenter{\cd{ & \1\ar[dr]^-{B_1} & \\ \AA \ar[rr]_-{M}\ar[ur]\urrtwocell<\omit>{<1>\;\;\mathfrak{b_1}} &  & \BB }}$$
holds. A limiting cone over a functor $M$ is then a cone 
$$\cd@=40pt{ & \1\ar[dr]^-{\lim M} & \\ \AA \ar[rr]_-{M}\ar[ur]\urrtwocell<\omit>{<1>\quad\mathfrak{p}^M} &  & \BB }$$ 
over $M$ such that for any other cone $\mathfrak{b}$ over $M$, there is a unique morphism of cones $\mathfrak{b}\to\mathfrak{p}^M$. 

\subsection{Comma categories}

Given a functor $M\colon \AA\to\BB$ and an object $B$ of $\BB$, the usual comma category $(B\downarrow M)$ comes equipped with the data
$$\cd@=30pt{(B\downarrow M)\ar[d]_-{B^{\downarrow M}}\ar[r]\drtwocell<\omit>{<1>\qquad(B,M)} & \1\ar[d]^-{B}\\ \AA\ar[r]_-{M} & \BB }$$
which is universal among such data: for any triple $(\XX,X,\mathfrak{b})$
$$\cd@=30pt{\XX\ar[d]_-{X}\ar[r]\drtwocell<\omit>{<0> \mathfrak{b}} & \1\ar[d]^-{B}\\ \AA\ar[r]_-{M} & \BB }$$
there exists a unique functor $U\colon\XX\to (B\downarrow M)$ such that $B^{\downarrow M}\circ U=X$ and the pasting identity
$$\vcenter{\cd@=30pt{ & (B\downarrow M)\ar[d]^-{B^{\downarrow M}}\ar[r]\drtwocell<\omit>{<1>\quad\quad(B,M)} & \1\ar[d]^-{B}\\ \XX\ar@{}[urr]|<<<<<<<<<<<<<{=}\ar[r]_-{X}\ar[ur]^-{U} & \AA\ar[r]_-{M} & \BB }}\quad\equiv\quad\vcenter{\cd@=30pt{\XX\ar[d]_-{X}\ar[r]\drtwocell<\omit>{<0> \mathfrak{b}} & \1\ar[d]^-{B}\\ \AA\ar[r]_-{M} & \BB }}$$
holds, and furthermore, for any two functors $U,V\colon \XX\to(B\downarrow M)$ and a natural transformation $\mathfrak{n}\colon B^{\downarrow M}\circ U\Rightarrow B^{\downarrow M}\circ V$ such that 
$$(M\bullet \mathfrak{n})\circ((B,M)\bullet U)=(B,M)\bullet V,$$
there exists a unique natural transformation $\mathfrak{m}\colon U\Rightarrow V$ such that 
$$B^{\downarrow M}\bullet \mathfrak{m}=\mathfrak{n}.$$
The second part of the universal property implies that $B^{\downarrow M}$ is faithful and if $\mathfrak{n}\colon B^{\downarrow M}\circ U\Rightarrow B^{\downarrow M}\circ V$ is a natural isomorphism, then so is $\mathfrak{m}$ in the above notation.

\subsection{Pullbacks of functors are strong}

Given a pullback of functors,
$$\cd{\PP \ar[r]^-{M'} \ar[d]_-{N'} \ar@{}[dr]|{\textrm{pb}} & \BB \ar[d]^-{N} \\ \AA \ar[r]_-{M} & \DD}$$
for any two functors $U,V\colon \XX\to\PP$ and natural transformations $\mathfrak{n}\colon N'\circ U\Rightarrow N'\circ V$ and $\mathfrak{m}\colon M'\circ U\Rightarrow M'\circ V$ satisfying
$$M\bullet \mathfrak{n}=N\bullet \mathfrak{m},$$
there exists a unique natural transformation $\mathfrak{q}\colon U\Rightarrow V$ such that 
$$N'\bullet \mathfrak{q}=\mathfrak{n} \qquad \text{and} \qquad M' \bullet \mathfrak{q}=\mathfrak{m}.$$
Following the terminology in~\cite{JMMV}, we say that such pullbacks are \emph{strong}. This property implies in particular that if $\mathfrak{n}$ and $\mathfrak{m}$ are natural isomorphisms in the above notation, then so is $\mathfrak{q}$.

\subsection{Pro-completion and the evaluation functors}\label{subsection procompletion}

One can extend Theorem~\ref{c tilde lemma} with the following properties of~$\CCC$:

\begin{theorem} \cite{AGV,GU} \label{c tilde lemma 2}
For any finitely complete small category $\CC$, we have:
\begin{enumerate}
\setcounter{enumi}{\value{c tilde enumeration}}
\item \label{commutativity of limits} In $\CCC$, cofiltered limits commute with limits and finite colimits.
\item \label{cofiltered limit} The cone $(P,\y)$ over the functor $\y \circ {P^{\downarrow\y}}$ is a limiting cone, for each object $P\in\CCC$. 
\end{enumerate}
$$\cd@=30pt{(P\downarrow \y)\ar[d]_-{P^{\downarrow \y}}\ar[r]\drtwocell<\omit>{<1>\quad\,\,\,(P,\y)} & \1\ar[d]^-{P}\\ \CC\ar[r]_-{\y} & \CCC }$$
\end{theorem}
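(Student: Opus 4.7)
The plan is to pass to the equivalent formulation inside $\Lex(\CC,\Set)$ via the definition $\CCC=\Lex(\CC,\Set)^{\op}$. Under this duality, cofiltered limits in $\CCC$ become filtered colimits in $\Lex(\CC,\Set)$, limits in $\CCC$ become colimits in $\Lex(\CC,\Set)$, and finite colimits in $\CCC$ become finite limits in $\Lex(\CC,\Set)$. The embedding $\y$ corresponds on the $\Lex$ side to the covariant representables $C\mapsto\hom_\CC(C,-)$, and for $P\in\Lex(\CC,\Set)$ the comma category $(P\downarrow\y)$ computed in $\CCC$ is (the opposite of) the category of elements $\int P$: by Yoneda, its objects $(C,f\colon P\to\y C)$ in $\CCC$ correspond to pairs $(C,x)$ with $x\in P(C)$, and morphisms follow the dualized naturality.

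For item~(4), the key classical step is that $\Lex(\CC,\Set)$ is closed under filtered colimits taken pointwise in $\Set^\CC$: since filtered colimits commute with finite limits in $\Set$, a pointwise filtered colimit of lex functors is again lex. Finite limits in $\Lex(\CC,\Set)$ are also pointwise, inherited from $\Set^\CC$. Consequently, commutativity of filtered colimits with finite limits in $\Lex(\CC,\Set)$ reduces, object-by-object in $\CC$, to the corresponding commutativity in $\Set$. Commutativity of filtered colimits with arbitrary colimits is automatic (colimits commute with colimits), once one knows that colimits in $\Lex(\CC,\Set)$ are obtained by reflecting the pointwise colimits from $\Set^\CC$; the reflector exists because $\Lex(\CC,\Set)$ is a reflective subcategory for small $\CC$.

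For item~(5), the dual assertion in $\Lex(\CC,\Set)$ is that $P$ is the filtered colimit of the diagram $(C,x)\in\int P\,\mapsto\,\hom_\CC(C,-)$, with the cocone given by Yoneda — each $x\in P(C)$ induces the natural transformation $\hom_\CC(C,-)\Rightarrow P$ sending $1_C\mapsto x$. Two checks are required. First, $\int P$ is filtered: this is a direct consequence of $P$ being flat (equivalently, lex), using $P$'s preservation of the terminal object to produce connecting objects and of binary products/equalizers to merge parallel morphisms. Second, the density (co-Yoneda) formula $P\cong\colimit_{(C,x)\in\int P}\hom_\CC(C,-)$ in $\Set^\CC$ is the standard pointwise computation: evaluating at $D\in\CC$ gives a quotient of $\bigsqcup_{(C,x)}\hom_\CC(C,D)$ which, via the assignment $[f,(C,x)]\mapsto P(f)(x)$, is in bijection with $P(D)$. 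Because $\int P$ is filtered, step~(4)'s closure property makes this pointwise colimit coincide with the colimit inside $\Lex(\CC,\Set)$. Dualizing back to $\CCC$, and invoking the universal property of the comma category $(P\downarrow\y)$, the universal cocone becomes exactly the cone $(P,\y)$.

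The main obstacle will be the careful bookkeeping of variance: tracking how the $\op$ in $\CCC=\Lex(\CC,\Set)^{\op}$ interacts with the comma construction $(P\downarrow\y)$, the Yoneda identification between elements $x\in P(C)$ and arrows $\hom_\CC(C,-)\Rightarrow P$, and the reversal between universal cones in $\CCC$ and universal cocones in $\Lex(\CC,\Set)$. Once the dictionary is pinned down, the mathematical content is light and classical: a lex functor out of a small category is a filtered colimit of representables indexed by its category of elements, and pointwise filtered colimits of presheaves commute with finite limits.
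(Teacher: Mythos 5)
The paper gives no proof of this statement: Theorem~\ref{c tilde lemma 2} is quoted directly from~\cite{AGV,GU}, so there is no internal argument to compare yours against. Judged on its own terms, your reconstruction of the classical proof is correct. For item~(\ref{commutativity of limits}) the two halves are exactly as you say: closure of $\Lex(\CC,\Set)$ under pointwise filtered colimits (because filtered colimits commute with finite limits in $\Set$) gives commutation with finite limits, while commutation with arbitrary colimits is the Fubini interchange of colimits, valid in any cocomplete category regardless of how the colimits are computed --- so the remark about the reflector is not actually needed for that half. For item~(\ref{cofiltered limit}), the Yoneda identification of $(P\downarrow\y)$ with (a variance twist of) the category of elements of $P$, the density formula computed pointwise, and the observation that this pointwise colimit already lies in $\Lex(\CC,\Set)$ because the index is filtered, together yield the claim. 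Two small inaccuracies are worth fixing when you pin down the dictionary: it is preservation of the terminal object that makes the index category nonempty, while preservation of binary products supplies the upper bounds for pairs of objects (equalizers handling parallel pairs); and the category whose filteredness you actually need is $(P\downarrow\y)^{\op}$ --- equivalently $(P\downarrow\y)$ must be cofiltered, which is also what the rest of the paper uses --- so the direction should not be left ambiguous. Neither point affects the substance of the argument.
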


Let $\Z$ be an arbitrary sketch and $\mathcal{W}$ a sketch with one object, no arrows and no conditions. Objects $Z$ in $\Z$ correspond bijectively to morphisms of sketches $Z\colon\mathcal{W}\to\Z$. Given a finitely complete small category $\CC$, the corresponding functors $Z_{\CC}\colon \Z\CC\to\CC$ and $Z_{\CCC}\colon \Z\CCC\to\CCC$ are the usual `evaluation functors' at the object $Z$, where we identified the category $\mathcal{W}\CC$ with $\CC$ and the category $\mathcal{W}\CCC$ with~$\CCC$. By Theorems~\ref{c tilde lemma}(\ref{bicompleteness}) and~\ref{c tilde lemma 2}(\ref{commutativity of limits}), the category $\Z\CCC$ has cofiltered limits and they can be computed component-wise. This implies that the family $(Z_{\CCC})_{Z\in\Z}$ of evaluation functors preserves and jointly reflects cofiltered limits. 

\subsection{Pro-completion and Kan extensions}\label{subsection kan extensions}

If $\mu\colon \V\to\Z$ represents a morphism of sketches and if $\CC$ is still a finitely complete small category, the diagram
$$\cd{ \Z\CC\ar[d]_-{\mu_\CC}\ar[r]^-{\y_\Z} & \Z\CCC\ar[d]^-{\mu_{\CCC}} \\ \V\CC\ar[r]_-{\y_\V} & \V\CCC}$$
obviously commutes. When $\V$ and $\Z$ are underlying sketches of finite categories, $\V\CC$ and $\Z\CC$ become the usual functor categories and via point-wise (right) Kan extension along~$\mu$, the functors $\mu_\CC$ and $\mu_{\CCC}$ both obtain right adjoints. Moreover, since $\y$ preserves finite limits, the Beck-Chevalley condition holds for the above square. In fact, since $\y$ is also injective on objects and faithful, once a right adjoint $\mu'_\CC$ for $\mu_\CC$ is chosen, we can choose a right adjoint $\mu'_{\CCC}$ for $\mu_{\CCC}$ in such a way that the Beck-Chevalley condition holds strictly. In other words, $\y_\Z \circ\mu'_\CC=\mu'_{\CCC}\circ\y_\V $ and when $\mathfrak{y}$ and $\mathfrak{e}$ denote the unit and the counit for the adjunction $\mu_\CC\dashv\mu'_\CC$, while $\mathfrak{y}'$ and $\mathfrak{e}'$ for the adjunction $\mu_{\CCC}\dashv\mu'_{\CCC}$, we have: $\y_\Z \bullet\mathfrak{y}=\mathfrak{y}'\bullet\y_\Z $ and $\y_\V \bullet\mathfrak{e}=\mathfrak{e}'\bullet\y_\V $. If in addition, as a functor between categories, $\mu$ is fully faithful and injective on objects, we can choose these adjunctions in such a way that counits for both adjunctions are identity natural transformations, and so the right adjoints $\mu'_\CC$ and $\mu'_{\CCC}$ are at the same time right inverses of $\mu_\CC$ and $\mu_{\CCC}$ respectively.

If $\mu\colon\V\to\Z$ is any morphism between underlying sketches of finite categories, if $\V'$ is formed from $\V$ by adding a set $X_1$ of commutativity conditions to $\V$, if $\Z'$ is obtained by adding to $\Z$ a set $X_2$ of commutativity conditions containing the images under $\mu$ of those in $X_1$ and if $\nu\colon\V'\to\Z'$ is the induced morphism of sketches, we can still construct right adjoints to $\nu_{\CC}$ and $\nu_{\CCC}$ which satisfy the Beck-Chevalley condition strictly. Moreover, if $\mu$ is fully faithful and injective on objects as a functor and if $X_2$ only contains the images under $\mu$ of the elements of $X_1$, we can again choose these adjunctions in such a way that both counits are identity natural transformations. To see this, it is enough to use the above arguments where we replace the finite categories inducing $\V$ and $\Z$ by their quotients under $X_1$ and $X_2$ respectively.

\subsection{Removing convergence conditions from a sketch}

For a sketch $\Z$, by $\iota^\Z\colon \Z_\ast\to \Z$ we denote the inclusion of the subsketch $\Z_\ast$ of $\Z$ obtained from $\Z$ by removing all convergence conditions in $\Z$. Furthermore, for a category $\CC$, the values of the functor $\iota^\Z_\CC\colon\Z\CC\to\Z_\ast\CC$ will be written as $\iota^\Z_\CC(S)=S_\ast$ for $\Z$-structures, and similarly, $\iota^\Z_\CC(m)=m_\ast$ for morphisms.

\section{Proof of the stability theorem}\label{section proof}

In this section we prove Theorem~\ref{C tilde preserves unconditional exactness properties}. The proof is organized in \ref{last step}~steps, excluding the following proof outline, which we have included by referee's suggestion.

\subsection*{\newstep Outline of the proof}\label{step zero}

Lemma~\ref{LemA} reduces the proof of the theorem to a functorial construction of $H_G\in\B\CCC$, from $G\in\A_{\y F}^{\alpha}\CCC$, such that there is an isomorphism $H_G \circ \beta\cong G$ natural in~$G$. In less technical language, for a given $\X$-structure $F$ in~$\X$, we want to naturally extend each $\A$-extension $G$ of $\y F$ in~$\CCC$, to a $\B$-structure $H_G$ in~$\CCC$. In Step~\ref{step definition HG} of the proof, we will obtain $H_G$ as a cofiltered limit of similar extensions $\y H$ of $\A$-extensions $\y K$ of~$\y F$, which we know exist by the assumption that $\alpha\vdash_F \beta$ and the fact that the (restricted) Yoneda embedding $\y\colon \CC\to\CCC$ preserves finite limits and all colimits.
$$\begin{tikzpicture}[line cap=round,line join=round,>=triangle 45,x=1cm,y=1cm,scale=0.7]
\draw [line width=0.8pt] (1.3476,5.417457987852216) circle (0.7825420121477844cm);
\draw [line width=0.8pt] (1.3476,6.2)-- (3.14638,6.2);
\draw [line width=0.8pt] (1.3476,4.634915975704431)-- (3.14638,4.634915975704431);
\draw [shift={(3.14638,5.417457987852216)},line width=0.8pt]  plot[domain=-1.5707963267948966:1.5707963267948966,variable=\t]({1*0.7825420121477844*cos(\t r)+0*0.7825420121477844*sin(\t r)},{0*0.7825420121477844*cos(\t r)+1*0.7825420121477844*sin(\t r)});
\draw [line width=0.8pt,dotted] (3.14638,6.2)-- (4.923347056785893,6.2);
\draw [line width=0.8pt,dotted] (3.14638,4.634915975704431)-- (4.923347056785893,4.634915975704431);
\draw [shift={(4.923347056785893,5.417457987852216)},line width=0.8pt,dotted]  plot[domain=-1.5707963267948966:1.5707963267948966,variable=\t]({1*0.7825420121477844*cos(\t r)+0*0.7825420121477844*sin(\t r)},{0*0.7825420121477844*cos(\t r)+1*0.7825420121477844*sin(\t r)});
\draw [line width=0.8pt] (-3.30414,0.9174579878522158) circle (0.7825420121477844cm);
\draw [line width=0.8pt] (-3.30414,1.7)-- (-1.50536,1.7);
\draw [line width=0.8pt] (-3.30414,0.13491597570443137)-- (-1.50536,0.13491597570443137);
\draw [shift={(-1.50536,0.9174579878522158)},line width=0.8pt]  plot[domain=-1.5707963267948966:1.5707963267948966,variable=\t]({1*0.7825420121477844*cos(\t r)+0*0.7825420121477844*sin(\t r)},{0*0.7825420121477844*cos(\t r)+1*0.7825420121477844*sin(\t r)});
\draw [line width=0.8pt,dotted] (-1.50536,1.7)-- (0.2716070567858928,1.7);
\draw [line width=0.8pt,dotted] (-1.50536,0.13491597570443137)-- (0.2716070567858928,0.13491597570443137);
\draw [shift={(0.2716070567858928,0.9174579878522158)},line width=0.8pt,dotted]  plot[domain=-1.5707963267948966:1.5707963267948966,variable=\t]({1*0.7825420121477844*cos(\t r)+0*0.7825420121477844*sin(\t r)},{0*0.7825420121477844*cos(\t r)+1*0.7825420121477844*sin(\t r)});
\draw [line width=0.8pt] (6.07189,0.9174579878522158) circle (0.7825420121477844cm);
\draw [line width=0.8pt] (6.07189,1.7)-- (7.87067,1.7);
\draw [line width=0.8pt] (6.07189,0.13491597570443137)-- (7.87067,0.13491597570443137);
\draw [shift={(7.87067,0.9174579878522158)},line width=0.8pt]  plot[domain=-1.5707963267948966:1.5707963267948966,variable=\t]({1*0.7825420121477844*cos(\t r)+0*0.7825420121477844*sin(\t r)},{0*0.7825420121477844*cos(\t r)+1*0.7825420121477844*sin(\t r)});
\draw [line width=0.8pt,dotted] (7.87067,1.7)-- (9.647637056785893,1.7);
\draw [line width=0.8pt,dotted] (7.87067,0.13491597570443137)-- (9.647637056785893,0.13491597570443137);
\draw [shift={(9.647637056785893,0.9174579878522158)},line width=0.8pt,dotted]  plot[domain=-1.5707963267948966:1.5707963267948966,variable=\t]({1*0.7825420121477844*cos(\t r)+0*0.7825420121477844*sin(\t r)},{0*0.7825420121477844*cos(\t r)+1*0.7825420121477844*sin(\t r)});
\draw [>=stealth,->,line width=0.8pt] (2.019952933012987,4.478760669746471) -- (-2.1367855821403943,1.8561553059579605);
\draw [>=stealth,->,line width=0.8pt] (2.6585737787110286,4.478760669746471) -- (-1.4981647364423525,1.8561553059579605);
\draw [>=stealth,->,line width=0.8pt,dotted] (3.887740435778408,4.478760669746471) -- (-0.2689980793749731,1.8561553059579605);
\draw [>=stealth,->,line width=0.8pt,dotted] (4.673118057270059,4.478760669746471) -- (0.5163795421166775,1.8561553059579605);
\draw [>=stealth,->,line width=0.8pt] (2.1672215156478507,4.483374707688921) -- (6.403644126598559,1.8515412680155086);
\draw [>=stealth,->,line width=0.8pt] (2.8223578663987547,4.483374707688922) -- (7.058780477349463,1.8515412680155094);
\draw [>=stealth,->,line width=0.8pt,dotted] (4.067116932825472,4.483374707688921) -- (8.30353954377618,1.8515412680155086);
\draw [>=stealth,->,line width=0.8pt,dotted] (4.886037371264102,4.483374707688922) -- (9.12245998221481,1.8515412680155094);
\draw [>=stealth,->,line width=0.8pt] (1.1517601719839496,4.478760669746472) -- (-3.0049783431694315,1.8561553059579614);
\draw [>=stealth,->,line width=0.8pt] (1.3155442596716758,4.483374707688921) -- (5.551966870622383,1.8515412680155086);

\draw[color=black] (0.60,6.2) node {$\X$};
\draw[color=black] (2.5,6.5) node {$\A$};
\draw[color=black] (4.5,6.5) node {$\B$};
\draw[color=black] (-4,0) node {$\X$};
\draw[color=black] (-2.2,-0.15) node {$\A$};
\draw[color=black] (-0.2,-0.15) node {$\B$};
\draw[color=black] (5.4,0) node {$\X$};
\draw[color=black] (7.2,-0.15) node {$\A$};
\draw[color=black] (9.2,-0.15) node {$\B$};
\draw[color=black] (1.3,5.4) node {$\y F$};
\draw[color=black] (2.8,5.4) node {$G$};
\draw[color=black] (4.75,5.4) node {$H_G$};
\draw[color=black] (-3.3,0.9) node {$\y F$};
\draw[color=black] (-1.8,0.9) node {$\y K_i$};
\draw[color=black] (0,0.9) node {$\y H_i$};
\draw[color=black] (6.1,0.9) node {$\y F$};
\draw[color=black] (7.6,0.9) node {$\y K_j$};
\draw[color=black] (9.4,0.9) node {$\y H_j$};
\end{tikzpicture}$$
The question is, where can we get the $K$-s from? As the diagram above suggests, we could get them by a similar representation of $G$ as a cofiltered limit, which is done in Step~\ref{step limiting cone}. This representation goes via an analogous representation of~$G_\ast$. 
$$\begin{tikzpicture}[line cap=round,line join=round,>=triangle 45,x=1cm,y=1cm,scale=0.5]
\draw [line width=0.8pt] (1.3476,5.1834) circle (1.0164cm);
\draw [line width=0.8pt] (1.3476,6.1998)-- (4.3484,6.1998);
\draw [shift={(4.3484,5.1834)},line width=0.8pt]  plot[domain=-1.5707963267948974:1.570796326794896,variable=\t]({1*1.0164*cos(\t r)+0*1.0164*sin(\t r)},{0*1.0164*cos(\t r)+1*1.0164*sin(\t r)});
\draw [line width=0.8pt] (1.3476,2.6182)-- (4.3484,2.6182);
\draw [shift={(4.3484,1.6018)},line width=0.8pt]  plot[domain=-1.5707963267948974:1.570796326794896,variable=\t]({1*1.0164*cos(\t r)+0*1.0164*sin(\t r)},{0*1.0164*cos(\t r)+1*1.0164*sin(\t r)});
\draw [line width=0.8pt] (1.3476,4.167)-- (4.3484,4.167);
\draw [line width=0.8pt] (1.3476,0.5854)-- (4.3484,0.5854);
\draw [line width=0.8pt] (1.3476,1.6018) circle (1.0164cm);
\draw [line width=0.8pt] (8.2688,5.1834) circle (1.0164cm);
\draw [line width=0.8pt] (8.2688,6.1998)-- (11.2696,6.1998);
\draw [shift={(11.2696,5.1834)},line width=0.8pt]  plot[domain=-1.5707963267948974:1.570796326794896,variable=\t]({1*1.0164*cos(\t r)+0*1.0164*sin(\t r)},{0*1.0164*cos(\t r)+1*1.0164*sin(\t r)});
\draw [line width=0.8pt] (8.2688,4.167)-- (11.2696,4.167);
\draw [line width=0.8pt] (8.2688,1.6018) circle (1.0164cm);
\draw [line width=0.8pt] (8.2688,2.6182)-- (11.2696,2.6182);
\draw [line width=0.8pt] (8.2688,0.5854)-- (11.2696,0.5854);
\draw [shift={(11.2696,1.6018)},line width=0.8pt]  plot[domain=-1.5707963267948974:1.570796326794896,variable=\t]({1*1.0164*cos(\t r)+0*1.0164*sin(\t r)},{0*1.0164*cos(\t r)+1*1.0164*sin(\t r)});
\draw [>=stealth,->,line width=0.8pt] (1.3476,3.865270814814817) -- (1.3476,2.906162074074074);
\draw [>=stealth,->,line width=0.8pt] (2.7668972972972963,3.8652708148148167) -- (2.7668972972972963,2.9061620740740737);
\draw [>=stealth,->,line width=0.8pt] (3.882059459459459,3.8652708148148163) -- (3.882059459459459,2.9061620740740732);
\draw [>=stealth,->,line width=0.8pt] (8.2688,3.865270814814817) -- (8.2688,2.906162074074074);
\draw [>=stealth,->,line width=0.8pt] (9.688097297297297,3.8652708148148167) -- (9.688097297297297,2.9061620740740737);
\draw [>=stealth,->,line width=0.8pt] (10.803259459459458,3.8652708148148163) -- (10.803259459459458,2.9061620740740732);
\draw [>=stealth,->,line width=0.8pt] (5.490530990990992,3.385716444444445) -- (6.851545363825364,3.385716444444445);
\draw [line width=0.8pt] (5.490530990990992,3.5249110962115955)-- (5.490530990990992,3.246521792677295);
\draw[color=black] (0.05,6.1) node {$\X_\ast$};
\draw[color=black] (3,6.6) node {$\A_\ast$};
\draw[color=black] (3.3,1.58) node {$\y K'$};
\draw[color=black] (1.4,5.2) node {$\y F_\ast$};
\draw[color=black] (3.3,5.2) node {$G_\ast$};
\draw[color=black] (3,0.1) node {$\A_\ast$};
\draw[color=black] (0.2,0.58) node {$\X_\ast$};
\draw[color=black] (7.1,6.2) node {$\X$};
\draw[color=black] (10.2,6.65) node {$\A$};
\draw[color=black] (8.3,5.2) node {$\y F$};
\draw[color=black] (10.5,5.2) node {$G$};
\draw[color=black] (7.1,0.58) node {$\X$};
\draw[color=black] (8.3,1.58) node {$\y F$};
\draw[color=black] (10.5,1.58) node {$\y K$};
\draw[color=black] (10.2,0.18) node {$\A$};
\draw[color=black] (7.9,3.39) node {$\mathsmaller{\parallel}$};
\end{tikzpicture}$$
That $G_\ast$ can be presented as a cofiltered limit of $\A_\ast$-structures of the form $\y K'$ for some $K'\in\A_\ast\CC$ is the set-base case of the uniformity lemma from~\cite{DS}, whose proof is incorporated in our Steps~\ref{step cofiltered category}--\ref{step limiting cone}, where we make a direct passage to~$G$ (hence establishing a more general form of the uniformity lemma). What allows this passage is a universal construction carried out in Steps~\ref{first step R}--\ref{last step R}. The steps preceding those are essentially for introducing necessary notation. Finally, in Steps~\ref{step the functor m square}--\ref{step the morphism Hm} the construction of $H_G$ is shown to be functorial, in Steps~\ref{step beta p limiting cone}--\ref{step the isomorphism iG}  we establish an isomorphism $H_G \circ \beta\cong G$ and in the last Step~\ref{step the naturality of iG} we prove its naturality, concluding the proof.

\subsection*{\newstep The pentagon of sketches}

Consider an $\X$-sequent $\cd{\X \ar[r]^-{\alpha} & \A \ar[r]^-{\beta} & \B}$ as in Theorem~\ref{C tilde preserves unconditional exactness properties}. We will need the following subsketches of $\A$ and corresponding subsketch inclusion morphisms between them,
\begin{equation}\label{EquAF}
\vcenter{\cd@=10pt{ & \A & \\ & & \A_\ast\ar[ul]_-{\iota^\A} \\ \X\ar[uur]^-{\alpha} & & \\ & & \overline{\X_\ast}\ar[uu]_-{\gamma} \\ & \X_\ast\ar[uul]^-{\iota^\X}\ar[ur]_-{\delta} & }}
\end{equation}
where $\overline{\X_\ast}$ denotes the extension of $\X_\ast$ within $\A_\ast$ which attaches to $\X_\ast$ all arrows of $\A_\ast$ between objects that lie in $\X_\ast$, and also inherits from $\A_\ast$ all commutativity conditions involving these arrows. The diagram above commutes, as all morphisms are subsketch inclusions. 

\subsection*{\newstep The functor $\y_\A^{\alpha, F}$}

Let $\CC$ be as in Theorem~\ref{C tilde preserves unconditional exactness properties}. Let $F$ be an $\X$-structure in $\CC$. Then $\y F$ is an $\X$-structure in~$\CCC$. Suppose there is a functorial verification of $\alpha\vdash\beta$ for $F$. We will need the following functor induced between the two lateral pullbacks, which are chosen in such a way that the `$\mathsf{J}$' functors are subcategory inclusions:
\begin{equation}\label{EquD}\vcenter{\cd@=30pt{\A^\alpha_F\CC\ar@{}[dddr]|{\textrm{pb}}\ar[ddd]\ar[dr]|-{\mathsf{J}^\alpha_F}\ar@{-->}[rrr]^-{\y_\A^{\alpha, F}} & \ar@{}[dr]|{=} & & \A^\alpha_{\y F}\CCC\ar[ddd]\ar[dl]|-{\mathsf{J}^\alpha_{\y F}}\ar@{}[dddl]|{\textrm{pb}} \\ & \A\CC \ar@{}[rd]|{=}\ar[r]^-{\y_{\A}}
\ar[d]_-{\alpha_\CC} & \A\CCC \ar[d]^-{\alpha_{\CCC}} & \\ & \X\CC \ar[r]_-{\y_{\X}} & \X\CCC  & \\ \1\ar@{=}[rrr]\ar[ur]^-{F} & \ar@{}[ur]|{=} & & \1\ar[ul]_-{\y F}}}
\end{equation}

\subsection*{\newstep Employing Lemma~\ref{LemA}}

In view of Lemma~\ref{LemA}, to prove that there is a functorial verification of $\alpha\vdash\beta$ for $\y F$, it suffices to functorially construct for each $G\in \A^\alpha_{\y F}\CCC$ an $H_G\in\B\CCC$ with an isomorphism $\beta_{\CCC}(H_G)\cong G$ natural in $G$. 

\subsection*{\newstep The functor $L$} 

For this, we first need some preliminary material. In particular, we need to show that for each $G\in \A^\alpha_{\y F}\CCC$, the functor $L\colon(G\downarrow \y_\A^{\alpha,F})\to(G_\ast \downarrow \y_{\A_\ast })$
which arises from the diagram
\begin{equation}\label{EquC}
\vcenter{\cd@=30pt{ (G\downarrow \y_\A^{\alpha,F})\ar@/_10pt/[ddd]\ar[dr]_-{G^{\downarrow \y_\A^{\alpha,F}}}\ar@{-->}[rrrr]^-{L} &  & \ar@{}[d]|{=} &  & (G_\ast\downarrow \y_{\A_\ast})\ar[ld]^-{G_\ast^{\downarrow \y_{\A_\ast}}}\ar@/^10pt/[ddd] \\ 
 & \A^\alpha_F\CC\ar@{}[dr]|{=}\ar[r]^-{\mathsf{J}^\alpha_F}\ar[d]_-{\y_\A^{\alpha,F}} & \A\CC \ar@{}[dr]|{=}\ar[d]|-{\y_{\A}}|(.46){\hole} \ar[r]^-{\iota_\CC^\A} & \A_\ast\CC\ar[d]^-{\y_{\A_\ast}} & \dltwocell<\omit>{<1>\quad\quad\quad\quad\quad(G_\ast,\y_{\A_\ast})} \\ 
 & \A^\alpha_{\y F}\CCC\ar[r]_-{\mathsf{J}^\alpha_{\y F}}\ultwocell<\omit>{<0>(G,\y_\A^{\alpha,F})\;\quad\quad\quad}  & \A\CCC\ar[r]_-{\iota_{\CCC}^\A} & {\A_\ast}\CCC &  \\ \1\ar[ur]^-{G}\ar@{=}[rrrr] & & \ar@{}[u]|{=} & & \1\ar[ul]_-{G_\ast} }}
\end{equation}
by the universal property of the comma category on the right, has a right adjoint with unit of adjunction being an isomorphism. Steps~\ref{first step R} to~\ref{last step R} are devoted to the construction of this right adjoint.

\subsection*{\newstep\label{first step R} The structure $\overline{F_\ast}$}

Since $\y$ is fully faithful, preserves finite limits and colimits and is injective on objects, and since $\delta\colon\X_\ast\to \overline{\X_\ast}$ is bijective on objects, the bottom right square in the diagram
\begin{equation}\label{EquAG}
\vcenter{\cd{\1\ar@{}[ddr]_>>>>>>>>>>>{=}\ar@{}[rrd]^>>>>>>>>>>>{=}\ar@{-->}[dr]|-{\overline{F_\ast}}\ar[dd]_-{F}\ar[r]^-{G} & \A\CCC 
\ar[r]^-{\iota^\A_{\CCC}}  & {\A_\ast}\CCC\ar[d]^-{\gamma_{\CCC}} \\ & \overline{\X_\ast}\CC\ar@{}[dr]|-{\mathrm{pb}}\ar[r]^-{\y_{\overline{\X_\ast}}}\ar[d]_-{\delta_{\CC}} & \overline{\X_\ast}\CCC\ar[d]^-{\delta_{\CCC}} \\ \X\CC \ar[r]_-{\iota^\X_\CC} & \X_\ast\CC\ar[r]_-{\y_{\X_\ast}} & \X_\ast\CCC}}
\end{equation}
is a pullback. Since the diagram of solid arrows commutes, we get the dashed arrow keeping the diagram commute. 

\subsection*{\newstep The natural transformation $\mathfrak{f}$}

Since $\y_{\overline{\X_\ast}}$ is full and faithful, we get a unique natural transformation $\mathfrak{f}$ in the left square below fulfilling the identity
\begin{equation}\label{EquB}
\vcenter{\cd@=20pt{(G_\ast\downarrow\y_{\A_\ast})\ar[r]^-{G_\ast^{\downarrow \y_{\A_\ast}}}\ar[d] & \A_\ast\CC\ar@{}[rdd]^-{=}\ar[d]^-{\gamma_\CC}\ar[r]^-{\y_{\A_\ast}} & {\A_\ast}\CCC\ar[dd]^-{\gamma_{\CCC}} \\ \1\ar@{}[rrd]_-{=}\ar[d]_-{G_\ast}\ar[r]_-{\overline{F_\ast}} & \overline{\X_\ast}\CC\ar[dr]_-{\y_{\overline{\X_\ast}}}\ultwocell<\omit>{<0>\mathfrak{f}} & \\ {\A_\ast}\CCC\ar[rr]_-{\gamma_{\CCC}} &  & \overline{\X_\ast}\CCC}} 
\quad\equiv
\vcenter{\cd@=40pt{(G_\ast\downarrow\y_{\A_\ast}) \ar[r]^-{G_\ast^{\downarrow \y_{\A_\ast}}}\ar[d] & \A_\ast\CC\ar[r]^-{\y_{\A_\ast}}\ar[d]^-{\y_{\A_\ast}}\ar@{}[rdd]^-{=} & {\A_\ast}\CCC\ar[dd]^-{\gamma_{\CCC}} \\ \1\ar[r]_-{G_\ast}\ar[d]_-{G_\ast}\ar@{}[rrd]_-{=} & {\A_\ast}\CCC\ar[dr]_-{\gamma_{\CCC}}\ultwocell<\omit>{<1.5>(G_\ast,\y_{\A_\ast})\quad\quad\quad} & \\ {\A_\ast}\CCC\ar[rr]_-{\gamma_{\CCC}} & & \overline{\X_\ast}\CCC.}}\end{equation}

\subsection*{\newstep The functor $\gamma_\CC'$ and the natural transformation $\mathfrak{u}$} 

Since $\A_\ast$ is the underlying sketch of a finite category further equipped with the commutativity conditions from $\overline{\X_\ast}$ and $\overline{\X_\ast}$ is a full and regular subsketch of $\A_\ast$, the functor $\gamma_\CC$ has a right-adjoint-right-inverse $\gamma_\CC'$, with counit of adjunction being an identity natural transformation (see Subsection~\ref{subsection kan extensions}). Let $\mathfrak{u}\colon 1_{\A_\ast\CC} \Rightarrow \gamma_\CC'\circ\gamma_\CC$ denote the unit of this adjunction. Then, the triangular identities give us
\begin{equation}
\label{EquAA} \gamma_\CC\bullet\mathfrak{u}=1_{\gamma_\CC}
\end{equation}
and
\begin{equation}
\label{EquO} \mathfrak{u}\bullet\gamma'_\CC=1_{\gamma'_\CC}.
\end{equation}

\subsection*{\newstep The functor $\gamma'_{\CCC}$ and the natural transformation $\mathfrak{v}$}

Moreover, as explained in Subsection~\ref{subsection kan extensions}, we can construct a right-adjoint-right-inverse $\gamma'_{\CCC}$ of~$\gamma_{\CCC}$, with counit of adjunction being an identity and unit denoted by $\mathfrak{v}\colon 1_{\A_\ast\CCC} \Rightarrow \gamma'_{\CCC}\circ\gamma_{\CCC}$ and such that the diagram 
\begin{equation}\label{EquAE}
\vcenter{\cd{\A_\ast\CC\ar[r]^-{\y_{\A_\ast}} & {\A_\ast}\CCC \\ \overline{\X_\ast}\CC\ar[r]_-{\y_{\overline{\X_\ast}}}\ar[u]^-{\gamma'_\CC} & \overline{\X_\ast}\CCC\ar[u]_-{\gamma'_{\CCC}}}}
\end{equation}
commutes and
\begin{equation}\label{EquAB}
\y_{\A_\ast}\bullet\mathfrak{u}=\mathfrak{v}\bullet\y_{\A_\ast}.
\end{equation}
The triangular identities give
\begin{equation}\label{EquAC}
\gamma_{\CCC}\bullet\mathfrak{v}=1_{\gamma_{\CCC}}
\end{equation}
and
\begin{equation*}
\mathfrak{v}\bullet\gamma'_{\CCC}=1_{\gamma'_{\CCC}}.
\end{equation*}

\subsection*{\newstep The functor $S$ and the natural transformations $\mathfrak{s}_1$ and $\mathfrak{s}_2$} 

Since $\A_\ast\CC$ has finite limits, we can consider the following pullback of natural transformations:
\begin{equation}\label{EquA}\vcenter{\cd@!=30pt{ & S\ar@{=>}[dl]_-{\mathfrak{s}_1}\ar@{=>}[dr]^-{\mathfrak{s}_2}\ar@{}[dd]|{\textrm{pb}} & \\ \gamma'_\CC\circ\overline{F_\ast}\circ\mathrel{!}_{(G_\ast\downarrow\y_{\A_\ast})}\ar@{=>}[rd]_-{\gamma'_\CC\bullet\mathfrak{f}} & & G_\ast^{\downarrow \y_{\A_\ast}}\ar@{=>}[dl]^-{\mathfrak{u}\bullet G_\ast^{\downarrow \y_{\A_\ast}}} \\ & \gamma'_\CC\circ \gamma_\CC\circ G_\ast^{\downarrow \y_{\A_\ast}} & }}\end{equation}
Since $\gamma_\CC$ preserves finite limits and in view of~(\ref{EquAA}), composing the above pullback with $\gamma_\CC$ results in the pullback
$$\cd@!=30pt{ & \gamma_\CC\circ S\ar@{=>}[dl]_-{\gamma_\CC\bullet\mathfrak{s}_1}\ar@{=>}[dr]^-{\gamma_\CC\bullet\mathfrak{s}_2}\ar@{}[dd]|{\textrm{pb}} & \\ \overline{F_\ast}\circ\mathrel{!}_{(G_\ast\downarrow\y_{\A_\ast})}\ar@{=>}[rd]_-{\mathfrak{f}} & & \gamma_\CC\circ G_\ast^{\downarrow \y_{\A_\ast}}\ar@{=}[dl] \\ & \gamma_\CC\circ G_\ast^{\downarrow \y_{\A_\ast}} & }$$ 
and hence $\gamma_\CC\bullet\mathfrak{s}_1$ is a natural isomorphism. Using Lemma~\ref{LemB} for the isomorphism $\gamma_\CC\bullet\mathfrak{s}_1$ evaluated at any object of $(G_\ast\downarrow\y_{\A_\ast})$, we could redefine pullback~(\ref{EquA}) so that $\gamma_\CC\bullet\mathfrak{s}_1$ is an identity natural transformation, i.e.,
\begin{equation}\label{EquN}
\gamma_\CC\bullet\mathfrak{s}_1=1_{\gamma_\CC\circ S}.
\end{equation}
Thus, without loss of generality, we assume $\gamma_\CC\bullet\mathfrak{s}_2=\mathfrak{f}$.

\subsection*{\newstep The natural transformation $\mathfrak{s}$}

The fact that $\y$ preserves finite limits also gives that composing the pullback~(\ref{EquA}) with $\y_{\A_\ast}$ results in a pullback. In view of~(\ref{EquB}), (\ref{EquAE}) and~(\ref{EquAB}), this pullback can be computed to be the bottom square in the following diagram:
\begin{equation}\label{EquE}\vcenter{\cd@!=50pt{ & G_\ast\circ\mathrel{!}_{(G_\ast\downarrow\y_{\A_\ast})}\ar@{==>}[d]^-{\mathfrak{s}}\ar@/_30pt/@{=>}[ddl]_-{\mathfrak{v}\bullet (G_\ast\circ\mathrel{!}_{(G_\ast\downarrow\y_{\A_\ast})})}\ar@/^30pt/@{=>}[ddr]^-{(G_\ast,\y_{\A_\ast})} & \\ & \y_{\A_\ast}\circ S\ar@{=>}[dl]_-{\y_{\A_\ast}\bullet\mathfrak{s}_1}\ar@{=>}[dr]^-{\y_{\A_\ast}\bullet\mathfrak{s}_2}\ar@{}[dd]|{\textrm{pb}} & \\ \gamma'_{\CCC}\circ\gamma_{\CCC}\circ G_\ast\circ\mathrel{!}_{(G_\ast\downarrow\y_{\A_\ast})}\ar@{=>}[rd]_-{(\gamma'_{\CCC}\circ \gamma_{\CCC})\bullet(G_\ast,\y_{\A_\ast})} & & \y_{\A_\ast}\circ G_\ast^{\downarrow \y_{\A_\ast}}\ar@{=>}[dl]^-{\mathfrak{v}\bullet(\y_{\A_\ast}\circ G_\ast^{\downarrow \y_{\A_\ast}})} \\ & \gamma'_{\CCC}\circ \gamma_{\CCC}\circ \y_{\A_\ast}\circ G_\ast^{\downarrow \y_{\A_\ast}} & }}\end{equation}
Since the outer diagram commutes, we obtain an induced natural transformation $\mathfrak{s}$ making the two triangular diagrams above commute.
Since $\gamma_{\CCC} \bullet \mathfrak{v}=1_{\gamma_{\CCC}}$~(\ref{EquAC}) and $(\gamma_{\CCC}\circ\y_{\A_\ast})\bullet \mathfrak{s}_1=1_{\y_{\overline{\X_\ast}}\circ\gamma_{\CC}\circ S}$ (by~(\ref{EquN})), we have in particular that
\begin{equation}\label{EquU}\gamma_{\CCC} \bullet \mathfrak{s}=1_{\y_{\overline{\X_\ast}}\circ\gamma_{\CC}\circ S}.\end{equation}

\subsection*{\newstep The functor $T$}

On one hand, the top rectangle in the diagram \vspace{5pt}
\begin{equation}\label{EquI}\vcenter{\cd{ (G_\ast\downarrow \y_{\A_\ast})\ar@{-->}[r]^-{T}\ar@/_10pt/[rdd]_-{S}\ar@/^20pt/@<2pt>[rrr] & \A^\alpha_F\CC\ar[d]_-{\mathsf{J}^\alpha_F}\ar[rr]\ar@{}[rrd]|{\textrm{pb}} & & \1\ar[d]^-{F} \\ & \A\CC\ar@{}[rrd]|{\textrm{pb}}\ar[d]_-{\iota_\CC^\A}\ar[rr]^-{\alpha_\CC} & & \X\CC\ar[d]^-{\iota^\X_\CC} \\ \ar@{}[uur]|>>>>>>>>>>>{=} & \A_\ast\CC\ar[r]_-{\gamma_\CC} & \overline{\X_\ast}\CC\ar[r]_-{\delta_\CC} & \X_\ast\CC }}\end{equation}
is a pullback by definition of $\A^\alpha_F\CC$. Moreover, since $\alpha$ is unconditional of finite kind, the convergence conditions of $\A$ already lie in~$\X$. Therefore, each $\A_\ast$-structure in $\CC$, whose restriction as an $\X_\ast$-structure extends as an $\X$-structure, can be extended to an $\A$-structure, showing that the bottom rectangle is also a pullback. On the other other hand, by~(\ref{EquAG}) and~(\ref{EquN}), the outer diagram commutes. This gives rise to the functor $T$ which makes the left triangular diagram commute.

\subsection*{\newstep The natural transformation $\mathfrak{t}$}

Since the two rectangles in the diagram
$$\cd{(G_\ast\downarrow \y_{\A_\ast}) \ar@{}@<1pt>[rd]|(.25){}="I" \ar@<3pt>"I";[rd]^(.4){\y_\A^{\alpha,F} \circ T} \ar@{}@<0pt>[rd]|(.13){}="J" \ar@<-3pt>"J";[rd]_(.55){G \circ \mathrel{!}_{(G_\ast\downarrow\y_{\A_\ast})}} \ar@{}@<-3pt>[r]|(.1){}="E" \ar@{}@<2pt>[d]|(-.12){}="G" \ar@{}@<2pt>[d]|(.1){}="K" \ar@{}@<34pt>[rrrrrd]|-{}="B" \ar@/^2pc/@<-4pt>[rrrrrd]_-{G_\ast\circ\mathrel{!}_{(G_\ast\downarrow\y_{\A_\ast})}} \ar@{}@<23pt>[rrrrrd]|-{}="A" \ar@{=>}"A";"B"_(.4){\,\mathfrak{s}} \ar@{}@<-34pt>[rddd]|-{}="D" \ar@{}@<-43pt>[rddd]|-{}="C" \ar@{=>}"C";"D" &&& \\ & \A^\alpha_{\y F}\CCC \ar[rr]_-{\mathsf{J}^\alpha_{\y F}} \ar[dd] \ar@{}[rrdd]|{\textrm{pb}} && \A\CCC \ar[rr]_-{\iota_{\CCC}^\A} \ar[dd]_-{\alpha_{\CCC}} \ar@{}[rrdd]|{\textrm{pb}} && \A_{\ast}\CCC \ar[d]^-{\gamma_{\CCC}} \ar@{}[l]|(.1){}="F" \ar@/^2pc/@<14pt>"E";"F"^-{\y_{\A_{\ast}} \circ S} \\ &&&&& \overline{\X_\ast}\CCC \ar[d]^-{\delta_{\CCC}} \\ & \1 \ar[rr]_-{\y F} \ar@{}@<-15pt>[u]|(.23){}="H" \ar@{}@<-1pt>[u]|(.24){}="L" \ar@/_2.5pc/@<-26pt>"G";"H" \ar@/_2.2pc/@<-8pt>"K";"L" && \X\CCC \ar[rr]_-{\iota_{\CCC}^\X} && \X_{\ast}\CCC}$$
are strong pullbacks, in view of~(\ref{EquU}), there is a unique natural transformation $\mathfrak{t}$ such that
\begin{equation}\label{EquF}\vcenter{\cd@=28pt{ (G_\ast\downarrow \y_{\A_\ast})\ar[r]^-{T}\ar[d] & \A^\alpha_F\CC\ar@{}[rd]|{=}\ar[r]^-{\mathsf{J}^\alpha_F}\ar[d]_-{\y_\A^{\alpha,F}} & \A\CC\ar@{}[rd]|{=}\ar[r]^-{\iota^\A_\CC}\ar[d]^-{\y_\A} & \A_\ast\CC\ar[d]^-{\y_{\A_\ast}} \\ \1\ar[r]_-{G} & \A^\alpha_{\y F}\CCC\ultwocell<\omit>{<0>\mathfrak{t}}\ar[r]_-{\mathsf{J}^\alpha_{\y F}} & \A\CCC\ar[r]_-{\iota^\A_{\CCC}} & {\A_\ast}\CCC}}\equiv\vcenter{\cd@=20pt{(G_\ast\downarrow \y_{\A_\ast})\ar[r]^-{S}\ar[d] & \A_\ast\CC\ar[d]^-{\y_{\A_\ast}} \\ \1\ar[r]_-{G_\ast} & {\A_\ast}\CCC.\ultwocell<\omit>{<0>\mathfrak{s}}}}\end{equation}

\subsection*{\newstep The functor $R$}

Now, by the universal property of the comma construction, we have a functor $R$ as displayed here:
\begin{equation}\label{EquG}\vcenter{\cd@=30pt{ & (G\downarrow \y_\A^{\alpha,F})\ar[d]^-{\!G^{\downarrow \y_\A^{\alpha,F}}}\ar[r]\drtwocell<\omit>{<1>\,\,\,\,\quad\quad(G,\y_\A^{\alpha,F})} & \1\ar[d]^-{G}\\ (G_\ast\downarrow \y_{\A_\ast})\ar@{}[urr]|<<<<<<<<<{=}\ar[r]_-{T}\ar[ur]^-{R} & \A^\alpha_F\CC\ar[r]_-{\y_{\A}^{\alpha,F}} & \A^\alpha_{\y F}\CCC }}\equiv\vcenter{\cd@=30pt{(G_\ast\downarrow \y_{\A_\ast})\ar[d]_-{T}\ar[r]\drtwocell<\omit>{<0> \mathfrak{t}} & \1\ar[d]^-{G}\\ \A^\alpha_F\CC\ar[r]_-{\y_{\A}^{\alpha,F}} & \A^\alpha_{\y F}\CCC }}\end{equation}
We are now to show that this functor is a right adjoint of $L$ (few more steps will be needed for this). We will proceed as follows. First we are going to build candidates for the unit and the counit of adjunction, and then we are going to show that they satisfy the triangular identities required for an adjunction.

\subsection*{\newstep $\mathfrak{f}\bullet L$ is an identity}

To build the candidate for the unit, we first go back to the pullback~(\ref{EquA}) and compose it with $L$ to get the following pullback.
$$\vcenter{\cd@!=30pt{ & S\circ L\ar@{=>}[dl]_-{\mathfrak{s}_1\bullet L}\ar@{=>}[dr]^-{\mathfrak{s}_2\bullet L}\ar@{}[dd]|{\textrm{pb}} & \\ \gamma'_\CC\circ\overline{F_\ast}\circ\mathrel{!}_{(G\downarrow\y_{\A}^{\alpha,F})}\ar@{=>}[rd]_-{\gamma'_\CC\bullet\mathfrak{f}\bullet L} & & G_\ast^{\downarrow \y_{\A_\ast}}\circ L\ar@{=>}[dl]^-{\mathfrak{u}\bullet (G_\ast^{\downarrow \y_{\A_\ast}}\circ L)} \\ & \gamma'_\CC\circ \gamma_\CC\circ G_\ast^{\downarrow \y_{\A_\ast}}\!\!\circ L & }}$$
We will now show that $\mathfrak{f}\bullet L$ is an identity natural transformation, from which we will be able to conclude that $\gamma'_\CC\bullet\mathfrak{f}\bullet L$ is an identity and therefore $\mathfrak{s}_2\bullet L$ is an isomorphism. Since $\y_{\overline{\X_\ast}}$ is faithful, in view of~(\ref{EquB}), it suffices to show that $\gamma_{\CCC}\bullet(G_\ast,\y_{A_\ast})\bullet L$ is an identity. In view of~(\ref{EquAF}) and~(\ref{EquC}) and since $\delta_{\CCC}$ is faithful, this reduces to showing that $$(\delta_{\CCC}\circ\gamma_{\CCC})\bullet (G_{\ast},\y_{\A_\ast}) \bullet L=(\iota^{\X}_{\CCC}\circ \alpha_{\CCC}\circ \mathsf{J}^\alpha_{\y F})\bullet(G,\y_\A^{\alpha,F})$$  
is an identity. Indeed, in view of~(\ref{EquD}),
\begin{align*}
(\iota^{\X}_{\CCC}\circ \alpha_{\CCC}\circ \mathsf{J}^\alpha_{\y F})\bullet(G,\y_\A^{\alpha,F}) & =(\iota^{\X}_{\CCC}\circ (\y F)\circ \mathrel{!}_{\A^{\alpha}_{\y F}\CCC}) \bullet(G,\y_\A^{\alpha,F}) \\ & =(\iota^{\X}_{\CCC}\circ (\y F))\bullet 1_{\mathrel{!}_{(G\downarrow \y_\A^{\alpha,F})}}\\ &=1_{\iota^{\X}_{\CCC}\circ (\y F)\circ \mathrel{!}_{(G\downarrow \y_\A^{\alpha,F})}}. 
\end{align*} 
So the above pullback becomes the following one.
\begin{equation}\label{EquH}\vcenter{\cd@!=30pt{ & S\circ L\ar@{=>}[dl]_-{\mathfrak{s}_1\bullet L}\ar@{=>}[dr]^-{\mathfrak{s}_2\bullet L}\ar@{}[dd]|{\textrm{pb}} & \\ \gamma'_\CC\circ\overline{F_\ast}\circ\mathrel{!}_{(G\downarrow\y_{\A}^{\alpha,F})}\ar@{=}[rd]
&& G_\ast^{\downarrow \y_{\A_\ast}}\circ L\ar@{=>}[dl]^-{\mathfrak{u}\bullet (G_\ast^{\downarrow \y_{\A_\ast}}\circ L)} \\ & \gamma'_\CC\circ \gamma_\CC\circ G_\ast^{\downarrow \y_{\A_\ast}}\!\!\circ L & }}\end{equation}

\subsection*{\newstep The natural isomorphism $\mathfrak{w}$}

Since $(\delta_\CC \circ \gamma_\CC) \bullet \mathfrak{s}_2 \bullet L = \delta_\CC \bullet \mathfrak{f} \bullet L$ is an identity natural transformation and the two rectangles in the diagram
$$\cd{(G\downarrow \y_\A^{\alpha,F}) \ar@{}@<1pt>[rd]|(.28){}="I" \ar@<3pt>"I";[rd] \ar@{}@<6pt>[rd]^(.6){G^{\downarrow \y_\A^{\alpha,F}}\circ R \circ L} \ar@{}[rd]|(.17){}="J" \ar@<-3pt>"J";[rd]_-{G^{\downarrow \y_\A^{\alpha,F}}} \ar@{}@<40pt>[rrrrrd]|-{}="B" \ar@{}@<-1pt>[r]|(.1){}="E" \ar@{}[d]|(-.1){}="G" \ar@{}@<2pt>[d]|(.15){}="K" \ar@/^2pc/@<-3pt>[rrrrrd]_-{G_\ast^{\downarrow \y_{\A_\ast}} \circ L} \ar@{}@<24pt>[rrrrrd]|-{}="A" \ar@{=>}"A";"B"_(.28){(\mathfrak{s}_2 \bullet L)^{-1}} \ar@{}@<-29pt>[rddd]|-{}="D" \ar@{}@<-39pt>[rddd]|-{}="C" \ar@{=>}"C";"D" &&& \\ & \A^\alpha_F\CC \ar[rr]_-{\mathsf{J}^\alpha_F} \ar[dd] \ar@{}[rrdd]|{\textrm{pb}} && \A\CC \ar[rr]_-{\iota_{\CC}^\A} \ar[dd]_-{\alpha_{\CC}} \ar@{}[rrdd]|{\textrm{pb}} && \A_{\ast}\CC \ar[dd]^-{\delta_{\CC} \circ \gamma_{\CC}} \ar@{}[l]|(.1){}="F" \ar@/^2.5pc/@<14pt>"E";"F"^-{S \circ L} \\ &&& \\ & \1 \ar[rr]_-{F} \ar@{}@<-13pt>[u]|(.26){}="H" \ar@{}@<-1pt>[u]|(.26){}="L" \ar@/_2.2pc/@<-24pt>"G";"H" \ar@/_1.7pc/@<-8pt>"K";"L" && \X\CC \ar[rr]_-{\iota_{\CC}^\X} && \X_{\ast}\CC}$$
are strong pullbacks, there exists a unique natural isomorphism $\mathfrak{w}$ such that
\begin{equation}\label{EquJ}
\vcenter{\cd{(G\downarrow\y_\A^{\alpha,F})\ar@{=}[dd]\ar[r]^-{L} & (G_\ast\downarrow\y_{\A_\ast})\ar[d]^-{R}\ar@{=}[r]\ar@{}[rdd]|-{=} & (G_\ast\downarrow\y_{\A_\ast})\ar[ddd]^-{S} \\ & (G\downarrow\y_\A^{\alpha,F})\ar[d]^-{G^{\downarrow \y_\A^{\alpha,F}}} & \\ (G\downarrow\y_\A^{\alpha,F})\ar[d]_-{L}\ar[r]_-{G^{\downarrow \y_\A^{\alpha,F}}}  & \A^\alpha_F\CC\uultwocell<\omit>{<0> \mathfrak{w}}\ar[rd]|(.45){\hole}|-{\iota_\CC^\A\circ\mathsf{J}^\alpha_F} & \\ (G_\ast\downarrow\y_{\A_\ast})\ar@{}[ur]_-{=}\ar[rr]_-{G_\ast^{\downarrow \y_{\A_\ast}}} & & \A_\ast\CC}}\equiv\vcenter{\cd@=30pt{(G\downarrow\y_\A^{\alpha,F})\ar[r]^-{L}\ar[d]_-{L} & (G_\ast\downarrow\y_{\A_\ast})\ar[d]^-{S} \\ (G_\ast\downarrow\y_{\A_\ast})\ar[r]_-{G_\ast^{\downarrow\y_{\A_\ast}}} & \A_\ast\CC. \ultwocell<\omit>{<0> (\mathfrak{s}_2\bullet L)^{-1}\quad\quad\; }}}\end{equation}

\subsection*{\newstep The natural unit $\mathfrak{y}$}

We want to lift $\mathfrak{w}$ to a natural isomorphism $\mathfrak{y}\colon 1_{(G\downarrow \y_\A^{\alpha,F})}\Rightarrow R\circ L$, which will be the required candidate for the unit of the adjunction. We will do this by using the universal property of the comma construction
$$\cd@=40pt{(G\downarrow \y_\A^{\alpha,F})\ar[d]_-{G^{\downarrow \y_\A^{\alpha,F}}}\ar[r]\drtwocell<\omit>{<1>\,\,\,\,\quad\quad(G,\y_\A^{\alpha,F})} & \1\ar[d]^-{G}\\ \A^\alpha_F\CC\ar[r]_-{\y_\A^{\alpha,F}} & \A^\alpha_{\y F}\CCC.}$$
To be able to apply the universal property to produce such lift of $\mathfrak{w}$, we must show
$$(\y_\A^{\alpha,F}\bullet\mathfrak{w})\circ(G,\y_\A^{\alpha,F})=(G,\y_\A^{\alpha,F})\bullet (R\circ L).$$ Since $\iota_{\CCC}^\A\circ\mathsf{J}^\alpha_{\y F}$ is faithful, it is sufficient to show that the two expressions of the equality above are equal after being composed with this functor. By the interchange law, 
$$(\iota_{\CCC}^\A\circ\mathsf{J}^\alpha_{\y F}) \bullet ((\y_\A^{\alpha,F}\bullet\mathfrak{w})\circ(G,\y_\A^{\alpha,F}))=((\iota_{\CCC}^\A\circ\mathsf{J}^\alpha_{\y F} \circ \y_\A^{\alpha,F})\bullet\mathfrak{w})\circ ((\iota_{\CCC}^\A\circ\mathsf{J}^\alpha_{\y F})\bullet(G,\y_\A^{\alpha,F})).$$
By the top trapezium in~(\ref{EquD}), the middle-right square in~(\ref{EquC}), and in view of~(\ref{EquJ}), we have:
$$(\iota_{\CCC}^\A\circ\mathsf{J}^\alpha_{\y F} \circ \y_\A^{\alpha,F})\bullet\mathfrak{w}=\y_{\A_\ast}\bullet(\mathfrak{s}_2\bullet L)^{-1}.$$
By commutativity of~(\ref{EquC}) and the top right triangular diagram in~(\ref{EquE}), we have:
$$(\iota_{\CCC}^\A\circ\mathsf{J}^\alpha_{\y F})\bullet(G,\y_\A^{\alpha,F})=((\y_{\A_\ast}\bullet\mathfrak{s}_2)\circ \mathfrak{s})\bullet L=(\y_{\A_\ast}\bullet\mathfrak{s}_2\bullet L)\circ (\mathfrak{s}\bullet L).$$
Putting it all together and applying~(\ref{EquF}) and~(\ref{EquG}), we get
\begin{align*}
(\iota_{\CCC}^\A\circ\mathsf{J}^\alpha_{\y F}) \bullet ((\y_\A^{\alpha,F}\bullet\mathfrak{w})\circ(G,\y_\A^{\alpha,F})) &=(\y_{\A_\ast}\bullet(\mathfrak{s}_2\bullet L)^{-1})\circ(\y_{\A_\ast}\bullet\mathfrak{s}_2\bullet L)\circ (\mathfrak{s}\bullet L) \\ &= s\bullet L \\ &= (\iota_{\CCC}^{\A}\circ \mathsf{J}^\alpha_{\y F})\bullet\mathfrak{t}\bullet L\\ &=(\iota_{\CCC}^{\A}\circ \mathsf{J}^\alpha_{\y F})\bullet(G,\y_{\A}^{\alpha,F})\bullet (R\circ L),
\end{align*}
as desired. So there exists a unique natural isomorphism $\mathfrak{y}\colon 1_{(G\downarrow \y_\A^{\alpha,F})}\Rightarrow R\circ L$ such that
\begin{equation}\label{EquK} G^{\downarrow\y_\A^{\alpha,F}}\bullet\mathfrak{y}=\mathfrak{w}.\end{equation}

\subsection*{\newstep The natural counit $\mathfrak{e}$}

The next step is to construct a candidate for the counit of adjunction $L\dashv R$. We will do this using the universal property of the comma construction
$$\cd@=50pt{(G_\ast\downarrow \y_{\A_\ast})\ar[d]_-{G_\ast^{\downarrow \y_{\A_\ast}}}\ar[r]\drtwocell<\omit>{<1>\quad\quad\quad(G_\ast,\y_{\A_\ast})} & \1\ar[d]^-{G_\ast}\\ \A_\ast\CC\ar[r]_-{\y_{\A_\ast}} & {\A_\ast}\CCC.}$$  
Notice that by commutativity the top trapezium in~(\ref{EquC}), and by~(\ref{EquG}) and~(\ref{EquI}), we have 
\begin{equation}\label{EquM}
G_\ast^{\downarrow \y_{\A_\ast}}\!\!\circ L\circ R=S.
\end{equation} 
Consider then the natural transformation $\mathfrak{s}_2\colon G_\ast^{\downarrow \y_{\A_\ast}}\!\!\circ L\circ R\Rightarrow G_\ast^{\downarrow \y_{\A_\ast}}$. To lift this to a natural transformation $L\circ R\Rightarrow 1_{(G_\ast\downarrow \y_{\A_\ast})}$, we must show $$(\y_{\A_\ast}\bullet\mathfrak{s}_2)\circ((G_\ast,\y_{\A_\ast})\bullet(L\circ R))=(G_\ast, \y_{\A_\ast}).$$
This equality follows directly by first applying commutativity of~(\ref{EquC}), and then~(\ref{EquG}) followed by~(\ref{EquF}), and finally, applying commutativity of the right triangular diagram in~(\ref{EquE}). We thus obtain a unique natural transformation $\mathfrak{e}\colon L\circ R\Rightarrow 1_{(G_\ast\downarrow \y_{\A_\ast})}$ such that
\begin{equation}
\label{EquL} G_\ast^{\downarrow\y_{\A_\ast}}\bullet\mathfrak{e}=\mathfrak{s}_2.
\end{equation}

\subsection*{\newstep First triangular identity}

We now begin proving the triangular identities. To prove $(\mathfrak{e}\bullet L)\circ (L\bullet \mathfrak{y})=1_L$, it is sufficient, by faithfulness of $G_\ast^{\downarrow\y_{\A_\ast}}$, to prove 
$$G_\ast^{\downarrow\y_{\A_\ast}}\bullet((\mathfrak{e}\bullet L)\circ (L\bullet \mathfrak{y}))=1_{G_\ast^{\downarrow\y_{\A_\ast}}\circ L}.$$
Indeed, we have:
\begin{align*}
G_\ast^{\downarrow\y_{\A_\ast}}\bullet((\mathfrak{e}\bullet L)\circ (L\bullet \mathfrak{y})) & = (G_\ast^{\downarrow\y_{\A_\ast}}\bullet\mathfrak{e}\bullet L)\circ (G_\ast^{\downarrow\y_{\A_\ast}}\bullet L\bullet \mathfrak{y}) & \textrm{[middle interchange]}\\ &= (\mathfrak{s}_2\bullet L)\circ (G_\ast^{\downarrow\y_{\A_\ast}}\bullet L\bullet \mathfrak{y}) & \textrm{[by (\ref{EquL})]}
\\ &= (\mathfrak{s}_2\bullet L)\circ (\mathfrak{s}_2\bullet L)^{-1} & [\textrm{by (\ref{EquC}), (\ref{EquK}), (\ref{EquJ})}]\\
&=1_{G_\ast^{\downarrow\y_{\A_\ast}}\circ L}.
\end{align*}

\subsection*{\newstep\label{last step R} Second triangular identity}

Now, since $\mathfrak{y}$ is an isomorphism, and the first triangular identity is already established, to get the second triangular identity $(R\bullet\mathfrak{e})\circ(\mathfrak{y}\bullet R)=1_{R}$ it is sufficient to prove $(L\circ R)\bullet\mathfrak{e}=\mathfrak{e}\bullet(L\circ R)$. Since $G_\ast^{\downarrow\y_{\A_\ast}}$ is faithful and $\mathfrak{s}_1$, $\mathfrak{s}_2$ are pullback projections and hence are jointly monomorphic, it is further sufficient to establish the following two identities:
\begin{align*}
\mathfrak{s}_1\circ((G_\ast^{\downarrow\y_{\A_\ast}}\circ L\circ R)\bullet\mathfrak{e})=\mathfrak{s}_1\circ(G_\ast^{\downarrow\y_{\A_\ast}}\bullet\mathfrak{e}\bullet(L\circ R)),\\
\mathfrak{s}_2\circ ((G_\ast^{\downarrow\y_{\A_\ast}}\circ L\circ R)\bullet\mathfrak{e})=\mathfrak{s}_2\circ (G_\ast^{\downarrow\y_{\A_\ast}}\bullet\mathfrak{e}\bullet(L\circ R)).
\end{align*}
We begin with the first one:
\begin{align*}
\mathfrak{s}_1\circ((G_\ast^{\downarrow\y_{\A_\ast}}\circ L\circ R)\bullet\mathfrak{e}) &= \mathfrak{s}_1\circ(S\bullet\mathfrak{e}) & \textrm{[by (\ref{EquM})]}\\
&=\mathfrak{s_1}\bullet\mathfrak{e}\\
&=((\gamma'_\CC\circ\overline{F_\ast}\circ\mathrel{!}_{(G_\ast\downarrow\y_{\A_\ast})})\bullet\mathfrak{e})\circ(\mathfrak{s}_1\bullet (L\circ R)) \\
&=\mathfrak{s}_1\bullet (L\circ R)=(\mathfrak{s}_1\bullet L)\bullet R \\
&=(\mathfrak{u}\bullet(G_\ast^{\downarrow\y_{\A_\ast}}\circ L\circ R))\circ(s_2\bullet(L\circ R)) & \textrm{[by (\ref{EquH})]} \\
&=(\mathfrak{u}\bullet S)\circ(\mathfrak{s}_2\bullet(L\circ R)) & \textrm{[by (\ref{EquM})]} \\
&=(\gamma'_\CC\bullet 1_{\gamma_\CC\circ S})\circ(\mathfrak{u}\bullet S)\circ(\mathfrak{s}_2\bullet(L\circ R))  \\
&=((\gamma'_\CC\circ \gamma_\CC)\bullet\mathfrak{s}_1)\circ(\mathfrak{u}\bullet S)\circ(\mathfrak{s}_2\bullet(L\circ R)) & \textrm{[by (\ref{EquN})]}  \\
&=(\mathfrak{u}\bullet \mathfrak{s}_1)\circ (\mathfrak{s}_2\bullet(L\circ R)) & \\
&=(\mathfrak{u}\bullet(\gamma'_\CC\circ\overline{F_\ast}\circ\mathrel{!}_{(G_\ast\downarrow\y_{\A_\ast})}))\circ\mathfrak{s}_1\circ (\mathfrak{s}_2\bullet(L\circ R)) \\
&=((\mathfrak{u}\bullet\gamma'_\CC)\bullet(\overline{F_\ast}\circ\mathrel{!}_{(G_\ast\downarrow\y_{\A_\ast})}))\circ\mathfrak{s}_1\circ (\mathfrak{s}_2\bullet(L\circ R)) \\
&=\mathfrak{s}_1\circ (\mathfrak{s}_2\bullet(L\circ R)) & \textrm{[by (\ref{EquO})]}\\
&=\mathfrak{s}_1\circ (G_\ast^{\downarrow\y_{\A_\ast}}\bullet\mathfrak{e}\bullet(L\circ R)) & \textrm{[by (\ref{EquL})]}
\end{align*}
The second identity is easier to obtain:
\begin{align*}
\mathfrak{s}_2\circ ((G_\ast^{\downarrow\y_{\A_\ast}}\circ L\circ R)\bullet\mathfrak{e}) & = (G_\ast^{\downarrow\y_{\A_\ast}}\bullet\mathfrak{e})\circ((G_\ast^{\downarrow\y_{\A_\ast}}\circ L\circ R)\bullet\mathfrak{e}) & \textrm{[by (\ref{EquL})]}\\
&=G_\ast^{\downarrow\y_{\A_\ast}}\bullet\mathfrak{e}\bullet\mathfrak{e}\\
&=(G_\ast^{\downarrow\y_{\A_\ast}}\bullet\mathfrak{e})\circ(G_\ast^{\downarrow\y_{\A_\ast}}\bullet\mathfrak{e}\bullet(L\circ R))\\
&=\mathfrak{s}_2\circ(G_\ast^{\downarrow\y_{\A_\ast}}\bullet\mathfrak{e}\bullet(L\circ R)) &\textrm{[by (\ref{EquL})]}
\end{align*} 
Now that we have constructed the adjunction $L\dashv R$, whose unit is an isomorphism, we will make few more remarks needed to construct the desired $\B$-structure $H_G$.

\subsection*{\newstep\label{step cofiltered category} $(G\downarrow \y_\A^{\alpha,F})$ is a cofiltered category}

Since $\CC$ has finite limits and $\A_\ast$ is the underlying sketch of a finite category further equipped with some commutativity conditions, also $\A_\ast\CC$ and $\A_\ast\CCC$ have finite limits, computed component-wise. Moreover, the functor 
$$\y_{\A_\ast}\colon\A_\ast\CC\to\A_\ast\CCC$$
preserves them. This implies that $(G_\ast \downarrow \y_{\A_\ast })$ has finite limits. In view of the adjunction $L\dashv R$, with unit being an isomorphism, this implies
that $(G\downarrow \y_\A^{\alpha,F})$ also has finite limits. Since $(G\downarrow \y_\A^{\alpha,F})$ is in addition a small category, we get that it is a cofiltered category.

\subsection*{\newstep The functors $L_A$}

Now consider an object $A$ in $\A$ and the associated functor $L_A$ arising in the following diagram thanks to the universal property of the comma category on the right.
$$\vcenter{\cd@=30pt{ (G_\ast\downarrow \y_{\A_\ast})\ar@/_10pt/[ddd]\ar[dr]_-{G_\ast^{\downarrow \y_{\A_\ast}}}\ar@{-->}[rrr]^-{L_A} &\ar@{}[rd]|{=}  &  & (G(A)\downarrow \y)\ar[ld]^-{G(A)^{\downarrow \y}}\ar@/^10pt/[ddd] \\ 
 & \A_\ast\CC\ar[d]_-{\y_{\A_\ast}}\ar@{}[dr]|{=}\ar[r]^-{A_\CC} & \CC\ar[d]^-{\y} & \dltwocell<\omit>{<1>\quad\quad\quad\quad\quad(G(A),\y)} \\ 
 & {\A_\ast}\CCC \ar[r]_-{A_{\CCC}}\ultwocell<\omit>{<0>(G_\ast,\y_{\A_\ast})\;\quad\quad\quad}\ar@{}[rd]|{=}  & \CCC &  \\ \1\ar[ur]^-{G_\ast}\ar@{=}[rrr] & & & \1\ar[ul]_-{G(A)} }}$$
Since the middle square above satisfies the Beck-Chevalley condition relative to the right adjoints of the horizontal functors (see Subsection~\ref{subsection kan extensions}), we can conclude that $L_A$ has a right adjoint. 

\subsection*{\newstep\label{step limiting cone} $\mathsf{J}^\alpha_{\y F}\bullet (G,\y_{\A}^{\alpha,F})$ is a limiting cone}

For an object $A \in \A$, consider the commutative diagram
$$\cd{(G \downarrow \y_\A^{\alpha,F})\ar[rddd] \ar[r]^-{L_A\circ L} \ar[d]_-{G^{\downarrow\y_\A^{\alpha,F}}} & (G(A) \downarrow \y) \ar[d]^-{G(A)^{\downarrow\y}}\ar@{}[dl]^<<<<<<<<<<{=} \\ \A^\alpha_F\CC \ar[d]_-{\y_{\A}^{\alpha,F}} & \CC \ar[dd]^-{\y} \\
\A^\alpha_{\y F}\CCC \ar[d]_-{\mathsf{J}^\alpha_{\y F}} &  \\ \A\CCC  \ar[r]_-{A_{\CCC}}\ar@{}[ur]^<<<<<<<<<<{=} & \CCC.}$$
By Theorem~\ref{c tilde lemma 2}(\ref{cofiltered limit}), $(G(A),\y)$ is a limiting cone of the composite of functors in the right column of the above diagram. Since the top functor has a right adjoint, this implies that $$(G(A),\y)\bullet (L_A\circ L)=(A_{\CCC}\circ \mathsf{J}^\alpha_{\y F})\bullet (G,\y_{\A}^{\alpha,F})$$ is a limiting cone of the diagonal functor. Since $(G \downarrow \y_\A^{\alpha,F})$ is a cofiltered category, we can conclude that
\begin{equation}\label{EquS}
\mathsf{J}^\alpha_{\y F}\bullet (G,\y_{\A}^{\alpha,F})
\end{equation}
is a limiting cone of the composite of functors in the left column. The vertex of this limiting cone is the object $G\in\A\CCC$. We have now all the ingredients to prove that there exists a $\B$-structure $H_G$ in $\CCC$ such that $H_G\circ\beta$ is isomorphic to $G$ in $\A\CCC $, and moreover, $H_G$ is functorial and the isomorphism is natural in $G$.

\subsection*{\newstep The functor $\beta_F'$}

Since there is a functorial verification of $\alpha\vdash\beta$ for $F$, we know that $\beta^\alpha_F\colon \B^{\beta\alpha}_F\CC\to\A^\alpha_F\CC$ admits a right inverse. Let us denote it by $\beta_F'$.

\subsection*{\newstep\label{step definition HG} The $\B$-structure $H_G$ and the natural transformation $\mathfrak{p}_G$}

We then have the following commutative diagram, where at the bottom, $A$ represents an arbitrary object of $\A$:
$$\cd{ (G \downarrow \y^{\alpha,F}_{\A})\ar[d]^-{G^{\downarrow\y^{\alpha,F}_{\A}}}\ar@{=}[r]\ar@/_80pt/@{-->}[dddd]_-{C_G}\ar@{}[dddd]_-{=\quad\quad\quad\quad\quad} & (G \downarrow \y^{\alpha,F}_{\A})\ar@{}[ddl]|{=}\ar[dd]^-{G^{\downarrow\y^{\alpha,F}_{\A}}} \\ \A^\alpha_F\CC\ar[d]_-{\beta'_F}\ar[dr]|(.4){\hole}|-{1_{\A^\alpha_F\CC}}\ar@{}[ddr]|<<<<<<<<<<<<{=} &  \\
\B^{\beta\alpha}_F\CC\ar@{}[ddr]|{=} \ar[d]_-{\mathsf{J}^{\beta\alpha}_F} \ar@<-2pt>[r]_-{\beta^\alpha_F} & \A^\alpha_F\CC \ar[d]^-{\y^{\alpha,F}_{\A}} \\
\B\CC \ar[d]_-{\y_{\B}} & \A^\alpha_{\y F}\CCC \ar[d]^-{\mathsf{J}^\alpha_{\y F}} \\
\B\CCC \ar[r]^-{\beta_{\CCC}}\ar@{..>}[d]_-{\beta(A)_{\CCC}}\ar@{}[dr]|{=} & \A\CCC \ar@{..>}[d]^-{A_{\CCC}} \\ \CCC\ar@{=}[r] & \CCC }$$
Let $H_G$ be the cofiltered limit of $C_G$ in the diagram above, with $\mathfrak{p}_G$ denoting the limiting cone. We will show that the construction $H_G$ is functorial in $G$. 

\subsection*{\newstep\label{step the functor m square} The functor $m^\#$}

A morphism $m\colon G\to G'$ in $\A^\alpha_{\y F}\CCC$ gives rise to a functor $m^\#\colon (G' \downarrow \y^{\alpha,F}_{\A})\to(G \downarrow \y^{\alpha,F}_{\A})$, which is uniquely determined by the following pasting identity.
\begin{equation}\label{EquQ}\vcenter{\cd@=35pt{ & (G\downarrow \y_\A^{\alpha,F})\ar[d]^-{G^{\downarrow \y_\A^{\alpha,F}}}\ar[r]\drtwocell<\omit>{<1>\,\,\,\,\quad\quad(G,\y_\A^{\alpha,F})} & \1\ar[d]^-{G}\\ (G'\downarrow \y_{\A}^{\alpha,F})\ar@{}[urr]|<<<<<<<<<{=}\ar[r]_-{G'^{\downarrow \y_\A^{\alpha,F}}}\ar[ur]^-{m^\#} & \A^\alpha_F\CC\ar[r]_-{\y_{\A}^{\alpha,F}} & \A^\alpha_{\y F}\CCC }}\equiv\vcenter{\cd@=35pt{(G'\downarrow \y^{\alpha,F}_{\A})\ar[d]^-{G'^{\downarrow \y_\A^{\alpha,F}}}\ar[r]\drtwocell<\omit>{<1>\,\,\,\,\quad\quad\; (G',\y_\A^{\alpha,F})} & \1\ar[d]^(.25){G'}\ar@{=}[r] & \1\ar[dl]^-{G}\dltwocell<\omit>{<2> m\;}\\ \A^\alpha_F\CC\ar[r]_-{\y_{\A}^{\alpha,F}} & \A^\alpha_{\y F}\CCC & }}
\end{equation}

\subsection*{\newstep\label{step the morphism Hm} The morphism $H_m$}

This functor gives rise, in turn, to a morphism $H_m\colon H_G\to H_{G'}$ which is uniquely determined by the following pasting identity.
\begin{equation}\label{EquR}\vcenter{\cd@=30pt{ & \1\ar[dr]|(.25){\hole}|<<<<<{H_{G'}}|(.35){\hole}\ar@/^25pt/[dr]|(.4){\hole}|-{H_G}|(.53){\hole}\drtwocell<\omit>{<-2>\quad H_m} & \\ (G'\downarrow \y_\A^{\alpha,F}) \ar[rr]_-{C_{G'}}\ar[ur]\urrtwocell<\omit>{<1>\quad\mathfrak{p}_{G'}} &  & \B\CCC}}\quad\quad\equiv\quad\vcenter{ \cd{ & & \1\ar[dr]^-{H_G} & \\ (G'\downarrow \y_\A^{\alpha,F})\ar[r]_-{m^\#} & (G\downarrow \y_\A^{\alpha,F}) \ar[rr]_-{C_G}\ar[ur]\urrtwocell<\omit>{<1>\quad\mathfrak{p}_{G}} &  & \B\CCC}}
\end{equation}
This establishes functoriality of $H_G$ in $G$.

\subsection*{\newstep\label{step beta p limiting cone} $\beta_{\CCC}\bullet\mathfrak{p}_G$ is a limiting cone}

It remains to construct an isomorphism $i^G\colon H_G\circ\beta\cong G$ natural in $G$. For this, we first claim that for each $G$, the cone $\beta_{\CCC}\bullet\mathfrak{p}_G$ is a limiting cone. Since $(G \downarrow \y_\A^{\alpha,F})$ is a cofiltered category, and the evaluation functors $A_{\CCC}$ jointly reflect cofiltered limits, it is sufficient to show that each cone $$A_{\CCC}\bullet(\beta_{\CCC}\bullet\mathfrak{p}_G)=(A_{\CCC}\circ\beta_{\CCC})\bullet\mathfrak{p}_G=\beta(A)_{\CCC}\bullet \mathfrak{p}_G$$ is a limiting cone. This is indeed the case since the evaluation functors $\beta(A)_{\CCC}$ preserve cofiltered limits.

\subsection*{\newstep\label{step the isomorphism iG} The isomorphism $i^G$}

Now the limiting cone $\beta_{\CCC}\bullet\mathfrak{p}_G$ is over the same functor as the limiting cone~(\ref{EquS}). So there must be an isomorphism $i^G\colon H_G\circ\beta=\beta_{\CCC}(H_G)\cong G$ uniquely determined by the identity
\begin{equation}\label{EquT}
(\mathsf{J}^\alpha_{\y F}\bullet (G,\y_{\A}^{\alpha,F}))\circ(i^G\bullet !_{(G\downarrow \y_\A^{\alpha,F})})=\beta_{\CCC}\bullet\mathfrak{p}_G.
\end{equation}

\subsection*{\newstep\label{step the naturality of iG}\label{last step} The naturality of $i^G$}

For the naturality of this isomorphism, we must show that the diagram 
$$\cd{ H_G\circ\beta\ar[r]^-{i^G}\ar[d]_-{H_m\bullet\beta} & G\ar[d]^-{m} \\ H_{G'}\circ\beta\ar[r]_-{i^{G'}} & G' }$$
commutes for each morphism $m\colon G\to G'$ in $\A^\alpha_{\y F}\CCC$. For this it suffices to show that 
$$(\mathsf{J}\bullet (G',\y_{\A}^{\alpha,F}))\circ((i^{G'}\circ(H_m\bullet\beta))\bullet !^{G'})=(\mathsf{J}\bullet (G',\y_{\A}^{\alpha,F}))\circ((m\circ i^G)\bullet !^{G'}),$$
where $\mathsf{J}=\mathsf{J}^\alpha_{\y F}$ and $!^{G'}=!_{(G'\downarrow \y_\A^{\alpha,F})}$ (similarly, in what follows, $!^{G}=!_{(G\downarrow \y_\A^{\alpha,F})}$). Indeed, we have:
{\allowdisplaybreaks
\begin{align*}
(\mathsf{J}\bullet (G',\y_{\A}^{\alpha,F}))\circ((i^{G'}\circ(H_m\bullet\beta))\bullet !^{G'}) & = (\mathsf{J}\bullet (G',\y_{\A}^{\alpha,F}))\circ(i^{G'}\bullet !^{G'})\circ((H_m\bullet\beta)\bullet !^{G'}) \\*
&= (\beta_{\CCC}\bullet\mathfrak{p}_{G'})\circ((H_m\bullet\beta)\bullet !^{G'}) \textrm{\hspace{27pt} [by (\ref{EquT}) for $G'$]}\\
&= (\beta_{\CCC}\bullet\mathfrak{p}_{G'})\circ(\beta_{\CCC}\bullet H_m\bullet !^{G'})\\
&= \beta_{\CCC}\bullet(\mathfrak{p}_{G'}\circ(H_m\bullet !^{G'}))\\
&= \beta_{\CCC}\bullet\mathfrak{p}_{G}\bullet m^\# \textrm{\hspace{123pt} [by (\ref{EquR})]}\\
&=((\mathsf{J}\bullet (G,\y_{\A}^{\alpha,F}))\circ(i^{G}\bullet !^{G}))\bullet m^\# \textrm{\hspace{37pt} [by (\ref{EquT})]} \\
&=(\mathsf{J}\bullet (G,\y_{\A}^{\alpha,F})\bullet m^\#)\circ(i^{G}\bullet !^{G'}) \\
&=(\mathsf{J}\bullet ((G',\y_{\A}^{\alpha,F})\circ (m\bullet!^{G'})))\circ(i^{G}\bullet !^{G'}) \textrm{\hspace{13pt} [by (\ref{EquQ})]} \\
&=(\mathsf{J}\bullet (G',\y_{\A}^{\alpha,F}))\circ (\mathsf{J}\bullet m\bullet!^{G'})\circ(i^{G}\bullet !^{G'})\\
&=(\mathsf{J}\bullet (G',\y_{\A}^{\alpha,F}))\circ (((\mathsf{J}\bullet m)\circ i^{G})\bullet !^{G'})\\*
&=(\mathsf{J}\bullet (G',\y_{\A}^{\alpha,F}))\circ ((m\circ i^{G})\bullet !^{G'})
\end{align*}}%
With this the proof of the stability theorem is complete.

\section{Coherence}\label{section coherence}

In this section we add a technical remark to Theorem~\ref{C tilde preserves unconditional exactness properties} that, in fact, under the conditions of the theorem, the right inverse of $\beta_{\y F}^{\alpha}$ can be chosen so that it agrees with the given right inverse of $\beta_F^{\alpha}$ in the following sense. Using the notation from the proof above, let us consider the factorisation $\y^{\beta\alpha,F}_{\B}$ making the diagram
\begin{equation}\label{EquX}\vcenter{\cd@=30pt{\B^{\beta\alpha}_F\CC \ar[dd]_-{\beta^\alpha_F} \ar@{-->}[rrr]^-{\y^{\beta\alpha,F}_{\B}} \ar[rd]|-{\mathsf{J}^{\beta\alpha}_F} \ar@{}[ddr]|-{\mathrm{pb}} &&& \B^{\beta\alpha}_{\y F}\CCC \ar[dd]^-{\beta^\alpha_{\y F}} \ar[ld]|-{\mathsf{J}^{\beta\alpha}_{\y F}} \ar@{}[ddl]|-{\mathrm{pb}} \\ & \B\CC \ar[r]^-{\y_\B} \ar[d]_(.4){\beta_\CC} \ar@{}[dr]|{=} \ar@{}[ru]|{=} & \B\CCC \ar[d]^(.4){\beta_{\CCC}} & \\ \A^\alpha_F\CC \ar[r]^-{\mathsf{J}^\alpha_F} \ar[dd] \ar@{}[dr]|-{\mathrm{pb}} & \A\CC \ar[r]^-{\y_\A} \ar[d]_-{\alpha_\CC} \ar@{}[dr]|{=} & \A\CCC \ar[d]^-{\alpha_{\CCC}} & \A^\alpha_{\y F}\CCC \ar[l]_(.55){\mathsf{J}^\alpha_{\y F}} \ar[dd] \ar@{}[dl]|-{\mathrm{pb}} \\ & \X\CC \ar[r]_-{\y_\X} \ar@{}[rd]|{=} & \X\CCC & \\ \1 \ar[ru]^-{F} \ar@{=}[rrr] &&& \1 \ar[lu]_-{\y F}}}\end{equation}
commutative, where the $\mathsf{J}$ functors are subcategory inclusions. Let us notice that the rectangle
$$\cd{\B^{\beta\alpha}_F\CC \ar[d]_-{\beta^\alpha_F} \ar[r]^-{\y^{\beta\alpha,F}_{\B}} & \B^{\beta\alpha}_{\y F}\CCC \ar[d]^-{\beta^\alpha_{\y F}} \\ \A^\alpha_F\CC \ar[r]_-{\y_\A^{\alpha,F}} & \A^\alpha_{\y F}\CCC}$$
is also commutative. Given a right inverse $\beta'_F$ of $\beta^\alpha_F$, we showed in the above proof the existence of a right inverse $\beta'_{\y F}$ of $\beta^\alpha_{\y F}$. Let us now prove that these functorial verifications agree (are coherent) with each other; i.e., with an appropriate choice of limits, they make the rectangle
$$\cd{\B^{\beta\alpha}_F\CC \ar[r]^-{\y^{\beta\alpha,F}_{\B}} & \B^{\beta\alpha}_{\y F}\CCC \\ \A^\alpha_F\CC \ar[u]^-{\beta'_F} \ar[r]_-{\y_\A^{\alpha,F}} & \A^\alpha_{\y F}\CCC \ar[u]_-{\beta'_{\y F}}}$$
commutative. For each $K \in \A^\alpha_F\CC$, the pasting
\begin{equation*}\vcenter{\cd@=34pt{(K\downarrow \A^\alpha_F\CC) \ar[r] \ar[d]_-{K^{\downarrow\A^\alpha_F\CC}} \drtwocell<\omit>{<1>\qquad\,\,\,\,\,\,(K,\A^\alpha_F\CC)} & \1 \ar[d]^-{K} \\ \A^\alpha_F\CC \ar[r]|-{1_{\A^\alpha_F\CC}} \ar[d]_-{1_{\A^\alpha_F\CC}} \ar@{}[rd]|-{\mathrm{pb}} & \A^\alpha_F\CC \ar[d]^-{\y_\A^{\alpha,F}} \\ \A^\alpha_F\CC \ar[r]_-{\y_\A^{\alpha,F}} & \A^\alpha_{\y F}\CCC}}\end{equation*}
where $(K\downarrow \A^\alpha_F\CC)$, $K^{\downarrow\A^\alpha_F\CC}$ and $(K,\A^\alpha_F\CC)$ denote $(K\downarrow 1_{\A^\alpha_F\CC})$, $K^{\downarrow 1_{\A^\alpha_F\CC}}$ and $(K,1_{\A^\alpha_F\CC})$ respectively, is isomorphic to the comma category $(\y_\A^{\alpha,F}(K)\downarrow \y_\A^{\alpha,F})$ since they satisfy the same universal property.
Let us denote this isomorphism by $\y_K^{\textrm{iso}} \colon (K\downarrow \A^\alpha_F\CC) \to (\y_\A^{\alpha,F}(K)\downarrow \y_\A^{\alpha,F})$. It is uniquely determined by the following pasting identity.
\begin{equation}\label{EquW}\vcenter{\cd@=50pt{(K \downarrow \A^\alpha_F\CC) \ar[d]_-{\y_K^{\textrm{iso}}} \ar@<4pt>@/^1pc/[rd]^-{K^{\downarrow \A^\alpha_F\CC}} \ar@{}[rdd]|(.3){=} & \\ (\y_\A^{\alpha,F}(K)\downarrow \y_\A^{\alpha,F}) \ar[r]^-{\y_\A^{\alpha,F}(K)^{\downarrow \y_\A^{\alpha,F}}}="B" \ar[d] & \A^\alpha_F\CC \ar[d]^-{\y_\A^{\alpha,F}} \\ \1 \ar[r]_-{\y_\A^{\alpha,F}(K)}="A" & \A^\alpha_{\y F}\CCC \ultwocell<\omit> \ar@{}"A";"B"|(.37){(\y_\A^{\alpha,F}(K),\y_\A^{\alpha,F})}}} \quad \equiv \quad \vcenter{\cd@=50pt{(K \downarrow \A^\alpha_F\CC) \ar[r]^-{K^{\downarrow \A^\alpha_F\CC}} \ar[d] & \A^\alpha_F\CC \ar[d]^-{1_{\A^\alpha_F\CC}} \\ \1 \ar[r]_-{K} & \A^\alpha_F\CC \ar[d]^-{\y_\A^{\alpha,F}} \ultwocell<\omit>{<-1>\qquad\qquad\quad\,\,\,\,\,(K,\A^{\alpha}_F\CC)} \\ & \A^\alpha_{\y F}\CCC}}\end{equation}
The universal property of the comma category gives rise to a functor $I \colon \1 \to (K\downarrow \A^\alpha_F\CC)$ uniquely determined by the following pasting identity.
\begin{equation}\label{EquV}\vcenter{\cd@=40pt{ & (K\downarrow \A^\alpha_F\CC)\ar[d]|-{K^{\downarrow\A^\alpha_F\CC}}\ar[r]\drtwocell<\omit>{<1>\qquad\quad(K,\A^\alpha_F\CC)} & \1\ar[d]^-{K}\\ \1\ar@{}[urr]|(.25){=}\ar[r]_-{K}\ar[ur]^-{I} & \A^\alpha_F\CC\ar[r]_-{1_{\A^\alpha_F\CC}} & \A^\alpha_F\CC}}\,\,\equiv\,\,\vcenter{\cd@=40pt{\1\ar[d]_-{K}\ar[r]\ar@{}[rd]|{=} & \1\ar[d]^-{K} \\ \A^\alpha_F\CC\ar[r]_-{1_{\A^\alpha_F\CC}} & \A^\alpha_F\CC & }}\end{equation}
Let us now prove that the cone
$$\cd@=20pt{&& \1 \ar[rrdd]^-{K} \\ \\ (K\downarrow \A^\alpha_F\CC) \ar[rrrr]_-{K^{\downarrow\A^\alpha_F\CC}} \ar[rruu] & \rtwocell<\omit>{<-5>\qquad\,\,\,\,\,\,(K,\A^\alpha_F\CC)} &&& \A^\alpha_F\CC \ar[rrrr]_-{\y^\B \circ \mathsf{J}^{\beta\alpha}_F \circ \beta'_F} &&&& \B\CCC}$$
over $C_{\y_\A^{\alpha,F}(K)} \circ \y_K^{\textrm{iso}}$ is a limiting cone.
Since $I$ is an initial object in $(K\downarrow \A^\alpha_F\CC)$, this is the case if and only if the cone
$$\cd@=20pt{&&&& \1 \ar[rrdd]^-{K} \\ \\\1 \ar[rr]_-{I} && (K\downarrow \A^\alpha_F\CC) \ar[rrrr]_-{K^{\downarrow\A^\alpha_F\CC}} \ar[rruu] & \rtwocell<\omit>{<-5>\qquad\,\,\,\,\,\,(K,\A^\alpha_F\CC)} &&& \A^\alpha_F\CC \ar[rrrr]_-{\y^\B \circ \mathsf{J}^{\beta\alpha}_F \circ \beta'_F} &&&& \B\CCC}$$
is a limiting cone. In view of~(\ref{EquV}), this cone is just the identity over a functor whose domain is~$\1$, which implies it is a limiting cone.
Therefore, since $\y_\A^{\alpha,F}$ is injective on objects, we can choose, for each $K \in \A^\alpha_F\CC$, $H_{\y_\A^{\alpha,F}(K)}$ to be
$$H_{\y_\A^{\alpha,F}(K)}=(\y_\B \circ \mathsf{J}^{\beta\alpha}_F \circ \beta'_F)(K)=(\mathsf{J}^{\beta\alpha}_{\y F} \circ \y^{\beta\alpha,F}_{\B} \circ \beta'_F)(K)$$
with \begin{equation}\label{EquZ}\mathfrak{p}_{\y_\A^{\alpha,F}(K)} = (\y_\B \circ \mathsf{J}^{\beta\alpha}_F \circ \beta'_F) \bullet (K,\A^\alpha_F\CC) \bullet (\y_K^{\textrm{iso}})^{-1}.\end{equation}
Now, with these choices, we would like to show that $i^{\y_\A^{\alpha,F}(K)}=1_{\y_\A^{\alpha,F}(K)}$. In view of the definition of $i^{\y_\A^{\alpha,F}(K)}$ from~(\ref{EquT}), this follows from the equalities:
\begin{align*}
&\mathsf{J}^\alpha_{\y F} \bullet (\y_\A^{\alpha,F}(K),\y_\A^{\alpha,F})\\
&= \mathsf{J}^\alpha_{\y F} \bullet (\y_\A^{\alpha,F} \bullet (K,\A^\alpha_F\CC) \bullet (\y_K^{\textrm{iso}})^{-1}) &\textrm{[by (\ref{EquW})]}\\
&= (\mathsf{J}^\alpha_{\y F} \circ \y_\A^{\alpha,F}) \bullet (K,\A^\alpha_F\CC) \bullet (\y_K^{\textrm{iso}})^{-1}\\
&= (\y_\A \circ \mathsf{J}^\alpha_F) \bullet (K,\A^\alpha_F\CC) \bullet (\y_K^{\textrm{iso}})^{-1} & \textrm{[by (\ref{EquD})]}\\
&= (\y_\A \circ \mathsf{J}^\alpha_F \circ \beta^\alpha_F \circ \beta'_F) \bullet (K,\A^\alpha_F\CC) \bullet (\y_K^{\textrm{iso}})^{-1}\\
&= (\beta_{\CCC} \circ \y_\B \circ \mathsf{J}^{\beta\alpha}_F \circ \beta'_F) \bullet (K,\A^\alpha_F\CC) \bullet (\y_K^{\textrm{iso}})^{-1} &\textrm{[by (\ref{EquX})]}\\
&= \beta_{\CCC} \bullet \mathfrak{p}_{\y_\A^{\alpha,F}(K)} &\textrm{[by (\ref{EquZ})]}
\end{align*}
In view of the proof of Lemma~\ref{LemB}, $i^{\y_\A^{\alpha,F}(K)}=1_{\y_\A^{\alpha,F}(K)}$ implies that $$\beta'_{\y F}(\y_\A^{\alpha,F}(K))=H_{\y_\A^{\alpha,F}(K)}=(\y^{\beta\alpha,F}_\B \circ \beta'_F)(K).$$
We still need to prove the identity $\beta'_{\y F} \circ \y_\A^{\alpha,F} = \y^{\beta\alpha,F}_\B \circ \beta'_F$ holds on morphisms. So let $n \colon K \to K'$ be any morphism in $\A^\alpha_F\CC$. We are required to show
$$H_{\y_\A^{\alpha,F}(n)}=(\mathsf{J}^{\beta\alpha}_{\y F} \circ \y^{\beta\alpha,F}_\B \circ \beta'_F)(n).$$
Since $\y_\A^{\alpha,F}$ is faithful and the equalities
\begin{align*}
&\y_\A^{\alpha,F} \bullet (K,\A^\alpha_F\CC) \bullet ((\y_K^{\textrm{iso}})^{-1} \circ (\y_\A^{\alpha,F}(n))^\#)\\
&= (\y_\A^{\alpha,F} \bullet (K,\A^\alpha_F\CC) \bullet (\y_K^{\textrm{iso}})^{-1}) \bullet (\y_\A^{\alpha,F}(n))^\#\\
&= (\y_\A^{\alpha,F}(K),\y_\A^{\alpha,F}) \bullet (\y_\A^{\alpha,F}(n))^\# &\textrm{[by (\ref{EquW})]}\\
&= (\y_\A^{\alpha,F}(K'),\y_\A^{\alpha,F}) \circ (\y_\A^{\alpha,F}(n) \bullet !_{(\y_\A^{\alpha,F}(K') \downarrow \y_\A^{\alpha,F})}) &\textrm{[by (\ref{EquQ})]}\\
&= (\y_\A^{\alpha,F} \bullet (K',\A^\alpha_F\CC) \bullet (\y_{K'}^{\textrm{iso}})^{-1}) \circ (\y_\A^{\alpha,F}(n) \bullet !_{(\y_\A^{\alpha,F}(K') \downarrow \y_\A^{\alpha,F})}) &\textrm{[by (\ref{EquW}) for $K'$]}\\
&= (\y_\A^{\alpha,F} \bullet (K',\A^\alpha_F\CC) \bullet (\y_{K'}^{\textrm{iso}})^{-1}) \circ (\y_\A^{\alpha,F} \bullet n \bullet !_{(\y_\A^{\alpha,F}(K') \downarrow \y_\A^{\alpha,F})})\\
&= \y_\A^{\alpha,F} \bullet (((K',\A^\alpha_F\CC) \bullet (\y_{K'}^{\textrm{iso}})^{-1}) \circ (n \bullet !_{(\y_\A^{\alpha,F}(K') \downarrow \y_\A^{\alpha,F})}))
\end{align*}
hold, the identity
\begin{equation}\label{EquY}
(K,\A^\alpha_F\CC) \bullet ((\y_K^{\textrm{iso}})^{-1} \circ (\y_\A^{\alpha,F}(n))^\#) = ((K',\A^\alpha_F\CC) \bullet (\y_{K'}^{\textrm{iso}})^{-1}) \circ (n \bullet !_{(\y_\A^{\alpha,F}(K') \downarrow \y_\A^{\alpha,F})})
\end{equation}
holds. In view of
{\allowdisplaybreaks
\begin{align*}
&\mathfrak{p}_{\y_\A^{\alpha,F}(K')} \circ ((\mathsf{J}^{\beta\alpha}_{\y F} \circ \y^{\beta\alpha,F}_\B \circ \beta'_F)(n) \bullet !_{(\y_\A^{\alpha,F}(K')\downarrow \y_\A^{\alpha,F})})\\*
&= \mathfrak{p}_{\y_\A^{\alpha,F}(K')} \circ ((\mathsf{J}^{\beta\alpha}_{\y F} \circ \y^{\beta\alpha,F}_\B \circ \beta'_F) \bullet n \bullet !_{(\y_\A^{\alpha,F}(K')\downarrow \y_\A^{\alpha,F})})\\
&= \mathfrak{p}_{\y_\A^{\alpha,F}(K')} \circ ((\y_\B \circ \mathsf{J}^{\beta\alpha}_F \circ \beta'_F) \bullet n \bullet !_{(\y_\A^{\alpha,F}(K')\downarrow \y_\A^{\alpha,F})}) &\textrm{[by (\ref{EquX})]}\\
&= ((\y_\B \circ \mathsf{J}^{\beta\alpha}_F \circ \beta'_F) \bullet (K',\A^\alpha_F\CC) \bullet (\y_{K'}^{\textrm{iso}})^{-1})\\*
&\qquad \circ ((\y_\B \circ \mathsf{J}^{\beta\alpha}_F \circ \beta'_F) \bullet n \bullet !_{(\y_\A^{\alpha,F}(K')\downarrow \y_\A^{\alpha,F})}) &\textrm{[by (\ref{EquZ}) for $K'$]}\\
&= (\y_\B \circ \mathsf{J}^{\beta\alpha}_F \circ \beta'_F) \bullet (((K',\A^\alpha_F\CC) \bullet (\y_{K'}^{\textrm{iso}})^{-1}) \circ (n \bullet !_{(\y_\A^{\alpha,F}(K')\downarrow \y_\A^{\alpha,F})}))\\
&= (\y_\B \circ \mathsf{J}^{\beta\alpha}_F \circ \beta'_F) \bullet (K,\A^\alpha_F\CC) \bullet ((\y_K^{\textrm{iso}})^{-1} \circ (\y_\A^{\alpha,F}(n))^\#) &\textrm{[by (\ref{EquY})]}\\*
&= \mathfrak{p}_{\y_\A^{\alpha,F}(K)} \bullet (\y_\A^{\alpha,F}(n))^\# &\textrm{[by (\ref{EquZ})]}
\end{align*}}%
and the definition of $H_{\y_\A^{\alpha,F}(n)}$ from~(\ref{EquR}), we have $H_{\y_\A^{\alpha,F}(n)}=(\mathsf{J}^{\beta\alpha}_{\y F} \circ \y^{\beta\alpha,F}_\B \circ \beta'_F)(n)$ as required. This shows that $\beta'_{\y F} \circ \y_\A^{\alpha,F} = \y^{\beta\alpha,F}_\B \circ \beta'_F$.

\section{Concluding remarks}\label{section conclusion}

In this section we discuss a few topics for possible future research in the subject of exactness properties.

\subsection{Exactness varieties of categories}

The study of exactness properties share similarities with universal algebra. There, for a fixed signature, one relates classes of identities expressed using operators from the signature and classes of algebras satisfying identities. Similarly here, it would be interesting to investigate the relationship between classes of exactness properties and classes of categories having exactness properties. In particular, which classes of (say, finitely complete) categories are `exactness varieties' of categories in the sense that they include all categories having all exactness properties in a fixed class of exactness properties, and only those? In other words, is there an analogue of Birkhoff Variety Theorem for exactness properties?

\subsection{Essentially algebraic categories}

Our stability theorem reinforces the idea, which emerged already in \cite{jacqminthesis,jacqmin1,jacqmin2}, that the study of exactness properties in the context of finitely complete categories is intimately linked with the theory of essentially algebraic categories in the sense of~\cite{AHR,AR}. Indeed, the dual categories of pro-completions of finitely complete small categories are nothing but essentially algebraic categories. Therefore, by iteration of pro-completion with its dual (up to omitted changes of universes for size issues),
$$\CC\hookrightarrow \LC\hookrightarrow \Lex(\Lex(\CC,\Set),\Set)$$ we obtain a representation of any finitely complete category $\CC$ as a full subcategory of an essentially algebraic category. Moreover, the embedding functor preserves finite limits, finite colimits and reflects isomorphisms. So by our stability theorem (applied twice), we get that, in many cases, working in a finitely complete category having a certain exactness property can be reduced to working in an essentially algebraic category having the same property. The role that essentially algebraic categories play for the study of exactness properties should be better understood.

\subsection{Higher-order exactness properties}

In the Introduction, we referred to exactness properties expressed by exactness sequents defined in this paper as `first-order' exactness properties. This terminology is suggested by the fact that in topos theory, the properties of a topos which are usually referred to as `first-order' properties are roughly those which can be expressed by our exactness sequents. Definition of higher-order exactness properties and the question of their stability under pro-completion are to be looked at. There are a number of examples of higher-order exactness properties not only in classical topos theory, but also in recent developments in categorical algebra (e.g.\ those introduced in \cite{BJK,BJ,Gray}).

\subsection{Exactness properties for Mal'tsev conditions}

In universal algebra, a `Mal'tsev condition' on a variety of algebras is a condition which is equivalent to the existence of certain terms in its algebraic theory satisfying certain term identities. Some Mal'tsev conditions can be expressed as first-order exactness properties of the corresponding category of algebras (see e.g.~\cite{janelidze5}). The question here is: given a Mal'tsev condition on a variety of algebras, can one find a first-order exactness property of a category which is equivalent to the given Mal'tsev condition in the restricted case of varieties of algebras? Experience working with Mal'tsev conditions that are known to be expressible as first-order exactness properties suggests that if a Mal'tsev condition involves equations where the same terms occur at different bracket depth, then such a Mal'tsev condition cannot be expressed as a first-order exactness property. In particular, the following is an open problem: is the Mal'tsev condition of existence of terms of the algebraic theory of groups expressible as a first-order exactness property? Note that the associativity axiom of the group multiplication has the multiplication term occurring at two different bracket levels. A related line of research is to identify exactness properties of an object which describe certain algebraically defined members of a variety. A striking positive result in this area is given in~\cite{MRVdL,GM}, where it is shown that groups can be identified in the variety of monoids using a first-order exactness property.

\subsection{Enriched and structured categories}

It would be interesting to explore extensions of our stability theorem (and more generally, the theory of exactness properties) when a category is replaced with another categorical structure, such as an enriched category or a monoidal category, see e.g.~\cite{DS,DS2}. In fact, the earlier version of our stability theorem was formulated for algebraically enriched categories (see~\cite{jacqminthesis}). Since algebraic enrichments can themselves be seen as exactness properties, as we remark at the end of Subsection~\ref{section examples}, the present formulation of the theorem only slightly drops generality of the context; aside from the context, the present theorem is actually significantly more general (for instance, its earlier version concerned exactness properties of empty structures). Another direction for generalization is considering exactness properties relative to special classes of morphisms, spans, etc. Sometimes this allows to unify various features of different exactness properties and generate new examples, see e.g. \cite{BMFMS,granjanurs,grandis,jacqmin3,NMF}. In general, a systematic investigation of exactness properties for `structured categories' should be an interesting direction for research.

\subsection{Completions}

There are many other types of completions arising in category theory, see e.g. \cite{ABLR,ARV,borceux,CC,CV,GL,hu,lack,lawvere,SC}. An interesting question would be to study which exactness properties are stable under these different completions. Particular instances of such stability results have been already established in~\cite{carboni,GJ,GranL,jacqminthesis,jacqmin2}. Even for pro-completion the question is not yet complete. Indeed, on one hand, one may wonder if the assumption on finite limits can be removed from Theorem~\ref{C tilde preserves unconditional exactness properties} (and thus replacing $\Lex(\CC,\Set)^{\op}$ by the pro-completion of~$\CC$). This question has been raised to us by Johnstone and remains open. On the other hand, as mentioned at the end of the Introduction, expressing an exactness property in a suitable way to apply our stability theorem may not be an easy task. This may be the reason why stability under the pro-completion for some particular exactness properties is still unknown, e.g.\ for being a Mal'tsev category, being regular protomodular, being protomodular, being strongly protomodular~\cite{bourn3}, and being a Gumm category~\cite{BG2}. Or perhaps, there are counterexamples to show that these properties are not stable under the pro-completion.



\vspace{30pt}
\begin{tabular}{rl}
Email: & pierre-alain.jacqmin@uclouvain.be\\
& zurab@sun.ac.za
\end{tabular}

\end{document}